\newif\ifdraftmode
\theoremstyle{plain}
\newtheorem{theorem}{Theorem}[section]
\newtheorem{proposition}[theorem]{Proposition}
\newtheorem{corollary}[theorem]{Corollary}
\newtheorem{lemma}[theorem]{Lemma}
\theoremstyle{definition}
\newtheorem{definition}[theorem]{Definition}
\newtheorem{remark}[theorem]{Remark}
\newtheorem{example}[theorem]{Example}
\newtheorem{counterexample}[theorem]{Counterexample}
\newcommand{\proofappendix}[1]{\hyperref[#1]{\hspace{\stretch{1}}{\raisebox{0.3ex}{\setlength{\fboxsep}{0.2ex}\fbox{\hspace{0.1ex}\tiny Proof in \cref*{#1}.\hspace{0.2ex}}}}}}
\newcommand{\qedappendix}{\hfill$\blacksquare$}
\newlist{thmenum}{enumerate}{1}  %
\setlist[thmenum]{label=\thethmenumi., ref=\thetheorem.\thethmenumi}  %
\newlist{corenum}{enumerate}{1}
\setlist[corenum]{label=\thecorenumi., ref=\thecorollary.\thecorenumi}
\theoremstyle{definition}
\newlist{dpenum}{enumerate}{1}
\setlist[dpenum]{label=(DP\arabic*), ref=(DP\arabic*)} %
\crefname{dpenumi}{design principle}{design principles}
\Crefname{dpenumi}{Design principle}{Design principles}
\newtheorem{precondition}[theorem]{Precondition}
    \definecolor{amaranth}{rgb}{0.9, 0.17, 0.31}%
    \newcommand{\todo}[1]{\marginpar{\tiny\color{BurntOrange}#1}}
    \newcommand{\note}[1]{\marginpar{\tiny\color{MidnightBlue}#1}}
    \newcommand{\todo}[1]{}
    \newcommand{\note}[1]{}
\tikzset{core/.style={inner sep=0pt}}
\tikzset{contraction/.style={line width=0.75}}
\tikzset{contractionDots/.style={contraction, dotted}}
\def\Core#1{%
    \fill[black] #1 circle [radius=0.1];
}
\def\Orth#1#2{%
    \fill[black] #1 -- ++(#2:0.1) arc (#2:#2+180:0.1) -- cycle;
    \draw[black, fill=white] #1 -- ++(#2+180:0.1) arc (#2+180:#2+360:0.1) -- cycle;
}
\newcommand{\indep}{\perp\kern-0.6em\perp}
\newcommand*{\mbb}[1]{\mathbb{#1}}
\newcommand*{\mcal}[1]{\mathcal{#1}}
\newcommand*{\mfrak}[1]{\mathfrak{#1}}
\newcommand*{\dd}{\ensuremath{\mathrm{d}}}
\newcommand*{\dx}[1][x]{\ensuremath{\,\dd{#1}}}
\mathchardef\mhyphen="2D
\newcommand{\bbone}{\mathds{1}}
\DeclareMathOperator{\supp}{supp}
\DeclareMathOperator{\rch}{rch}
\DeclareMathOperator{\cl}{cl}
\let\inf\relax  %
\DeclareMathOperator*{\inf}{inf\vphantom{\sup}}
\DeclareMathOperator*{\argmin}{arg\,min}
\DeclareMathOperator*{\esssup}{ess\,sup}
\DeclarePairedDelimiter{\pars}{\ensuremath{(}}{\ensuremath{)}}
\DeclarePairedDelimiter{\bracs}{\ensuremath{[}}{\ensuremath{]}}
\DeclarePairedDelimiter{\braces}{\ensuremath{\{}}{\ensuremath{\}}}
\DeclarePairedDelimiter{\inner}{\langle}{\rangle}
\DeclarePairedDelimiter{\norm}{\|}{\|}
\DeclarePairedDelimiter{\abs}{\lvert}{\rvert}
\let\oldbullet\bullet
\newlength{\raisebulletlen}
\renewcommand\bullet{\raisebox{\raisebulletlen}{\,\tiny$\oldbullet$}\,}
\newcommand*{\xhat}[2][0.3em]{#2\kern-#1\hat{\vphantom{#2}}\kern#1}
\def\mscclassname{{\bfseries \emph{AMS subject classifications.}}}%
\def\mscclasses#1{\par\addvspace\medskipamount{\rightskip=0pt plus1cm
\def\and{\ifhmode\unskip\nobreak\fi\ $\cdot$
}\noindent\mscclassname\enspace\ignorespaces#1\par}}
\title{Convergence bounds for nonlinear least squares and applications to tensor recovery}
\author{\href{https://orcid.org/0000-0002-2995-126X}{\includegraphics[scale=0.06]{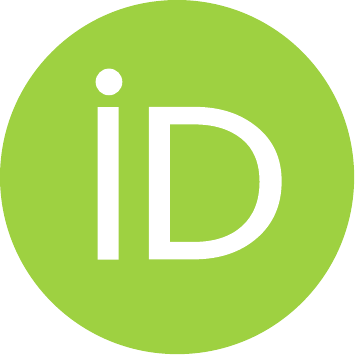}\hspace{1mm}\textcolor{black}{Philipp Trunschke}} \\
	Department of Mathematics\\
	Technische Universität Berlin\\
	Berlin, Germany \\
	\texttt{ptrunschke@mail.tu-berlin.de} \\
}
\begin{document}
\maketitle

\begin{abstract}
    We consider the problem of approximating a function in general nonlinear subsets of $L^2$ when only a weighted Monte Carlo estimate of the $L^2$-norm can be computed.
    Of particular interest in this setting is the concept of sample complexity, the number of samples that are necessary to recover the best approximation.
    Bounds for this quantity have been derived in a previous work and depend primarily on the model class and are not influenced positively by the regularity of the sought function.
    This result however is only a worst-case bound and is not able to explain the remarkable performance of iterative hard thresholding algorithms that is observed in practice.
    We reexamine the results of the previous paper and derive a new bound that is able to utilize the regularity of the sought function.
    A critical analysis of our results allows us to derive a sample efficient algorithm for the model set of low-rank tensors.
    The viability of this algorithm is demonstrated by recovering quantities of interest for a classical high-dimensional random partial differential equation.
\end{abstract}

\keywords{empirical $L^2$ approximation \and sample efficiency \and sparse tensor networks \and alternating least squares}
\vspace{-1em}
\mscclasses{15A69 \and 41A30 \and 62J02 \and 65Y20 \and 68Q25}

\section{Introduction}
\note{Often only the introduction is read. Therefore it should state the main results as clearly as possible.}

In this paper we consider the task of estimating an unknown function from noiseless observations.
For this problem to be well-posed, some prior information about the function has to be imposed.
This often takes the form of regularity assumptions, like the ability to be well approximated in some model class.
Regularization is another popular method to encode regularity assumptions but, assuming Lagrange duality, can also be interpreted as a restriction to a model class.
Given such a model class, it is of particular interest how well a sample-based estimator can approximate the sought function.
In investigating this question, many papers rely on a \emph{restricted isometry property} (RIP) or a RIP-like condition.
The RIP asserts, that the sample-based estimate of the approximation error is equivalent to the approximation error for all elements of the model class.
This is an important property with respect to generalization.
Without the equivalence, it is easy to conceive circumstances under which a minimizer of the empirical approximation error is arbitrarily far away from the real best approximation.

In this setting, the quality of the estimator depends on the number of samples that are required for the RIP to hold with a prescribed probability.
This has been studied extensively for linear spaces~\cite{cohen_2017_least-squares} and sparse-grid spaces~\cite{bohn_2018_sparse_grid}, for sparse vectors~\cite{candes06recovery,Rauhut2016weighted_l1}, low-rank matrices and tensors~\cite{recht_2010_nuclear_norm_minimization,rauhut2017iht}, as well as for neural networks~\cite{goessmann2020restricted} and, only recently, for generic, non-linear model classes~\cite{EST20}.\footnote{As machine learning and statistics are huge and highly-active research areas, this list raises no claim to completeness.}

This work continues the line of thought, started in~\cite{EST20}, by
studying the dependence of the RIP on the model class and by utilizing the gained insights to develop a new algorithm that drastically outperforms existing state-of-the-art algorithms~\cite{eigel2018vmc0,Grasedyck2019SALSA,Kraemer2020thesis} in the sample-scarce regime.

Although applicable to a wide range of model classes, our deliberations focus on model classes of tensor networks.
These have been applied successfully in uncertainty quantification~\cite{haberstich_2020_thesis} and dynamical systems recovery~\cite{goette_2020_dynamical_laws}.
In~\cite{ESTW19} tensor networks were used for the sample-based, non-intrusive computation of parametric solutions and quantities of interest of high-dimensional, random partial differential equations.
It was shown that, compared to (quasi) Monte-Carlo methods, tensor networks allow for a drastic reduction in the number of samples.

\paragraph{Setting.}
Consider the space $\mcal{V} = L^2\pars{Y,\rho}$ for some probably measure $\rho$ and define the norms $\norm{\bullet} := \norm{\bullet}_{L^2\pars{Y,\rho}}$ and $\norm{\bullet}_\infty = \norm{\bullet}_{L^\infty\pars{Y,\rho}}$.
Given point-evaluations $\braces{\pars{y^i, u\pars{y^i}}}_{i=1}^n$ of an unknown function $u\in\mcal{V}$ we want to find a (not necessarily unique) best approximation
\begin{equation}
    u_{\mcal{M}} \in \argmin_{v\in \mcal{M}}\ \norm{u - v} \label{eq:min}
\end{equation}
of $u$ in the \emph{model class} $\mcal{M}\subseteq\mcal{V}$.
In general however, $u_{\mcal{M}}$ is not computable and a popular remedy is to estimate
\begin{equation} \label{eq:min_emp}
    \norm{v} \approx \norm{v}_{y} := \pars*{\frac{1}{n}\sum_{i=1}^n w\pars{y^i} \abs{v\pars{y^i}}^2}^{1/2}
    \quad\text{and}\quad
    u_{\mcal{M}} \approx u_{\mcal{M},y} \in \argmin_{v\in\mcal{M}}\ \norm{u - v}_{y} ,
\end{equation}
where $w$ is a fixed \emph{weight function}, satisfying $w \ge 0$ and $\int_Y w^{-1} \dx[\rho] = 1$, and where $y_i\sim w^{-1}\rho$ for all $y=1,\ldots, n$.

This problem occurs in many applications like system identification~\cite{goette_2020_dynamical_laws}, the computation of surrogate models for high-dimensional partial differential equations~\cite{eigel2018vmc0,ESTW19}
and the computation of conditional expectations in computational finance~\cite{BEST21}.
Three particularly illustrative application to which we will repeatedly refer to in this work are polynomial regression, sparse regression and matrix completion.
\begin{example} \label{ex:polynomial_regression}
    Let $Y = \bracs{-1,1}$, $\rho = \frac{1}{2}\dx$ be the uniform distribution on $Y$ and define $\mcal{V} = L^2\pars{Y,\rho}$.
    In polynomial regression the model class $\mcal{M}$ is given by a finite dimensional subspace of polynomials.
    The goal is to obtain a fit of $u$ in $\mcal{M}$ using just point-wise evaluations of $u$.
\end{example}
\begin{example} \label{ex:sparse_regression}
    Let $Y$, $\rho$ and $\mcal{V}$ be defined as in \cref{ex:polynomial_regression}.
    The objective of sparse regression is the same as for least-squares regression, the model class, however, is more restrictive.
    For any fixed orthonormal basis $\braces{B_j}_{j\in\mbb{N}}\subseteq\mcal{V}$ and any sequence $\braces{\omega_j}_{j\in\mbb{N}}$, that satisfies $\omega_j \ge \norm{B_j}_\infty$ for all $j\in\mbb{N}$, the $\omega$-weighted $\ell^p$ is defined as
    \begin{equation}
        \norm{v}_{\omega,p} := \pars*{\sum_{j\in\mbb{N}} \abs{v_j^{\vphantom{p}}}_{\vphantom{j}}^p\omega_j^{2-p}}^{1/p} 
        \quad\text{for $p\in(0,2]$ and}\quad
        \norm{v}_{\omega,0} := \sum_{j\in\mbb{N}} \abs{v_j^{\vphantom{2}}}_{\vphantom{j}}^0\omega_j^{2},
    \end{equation}
    with the convention that $0^0 = 0$.
    Here $v_j$ denotes the coefficient of $v$ with respect to the basis function $B_j$.
    The model class of $\omega$-weighted $s$-sparse functions can then be defined as
    \begin{equation}
        \mcal{M} := \braces*{v\in\mcal{V} \,:\, \norm{v}_{\omega,0} \le s}.
    \end{equation}
\end{example}
\begin{example} \label{ex:matrix_completion}
    Let $Y = \bracs{m}\times\bracs{n}$ and $\rho$ be a uniform distribution on $Y$.
    Observe that this means $\mcal{V} = \mbb{R}^{m\times n}$ and $\norm{\bullet} = \frac{1}{\sqrt{d_1d_2}}\norm{\bullet}_{\mathrm{Fro}}$.
    In matrix completion we assume that $\mcal{M}$ is the set of matrices with rank bounded by $r$ and want to find a matrix $u:Y\to \mbb{R}$ from just a few of its entries.
\end{example}

In a previous work~\cite{EST20} the authors show that the empirical best approximation error $\norm{u - u_{\mcal{M},n}}$ is equivalent to the best approximation error $\norm{u - u_{\mcal{M}}}$ if the \emph{restricted isometry property} (RIP)
\begin{equation}\label{eq:rip}
    \operatorname{RIP}_A\pars{\delta} :\Leftrightarrow \pars{1-\delta}\norm{u}^2 \le \norm{u}_y^2 \le \pars{1+\delta}\norm{u}^2 \qquad \forall u\in A
\end{equation}
is satisfied for the set $A=\braces{u_{\mcal{M}}} - \pars{\mcal{M}\cup\braces{u}}$ and any $\delta \in \pars{0,1}$.
A worst-case estimate for the probability of this RIP is derived and it is shown that the RIP holds with high probability for many model classes.
To the knowledge of the authors these are the first bounds in this general nonlinear setting.
Although these bounds are far from optimal they allow  us to consider the empirical best approximation problem for arbitrary nonlinear model classes.

In this work we consider model classes of tensor networks and show that the worst-case estimate for their sample complexity, i.e.\ the number of samples that are necessary to achieve the RIP with a prescribed probability, behaves asymptotically the same way as the sample complexity estimate for the full tensor space in which they are contained.
Although the covering number of a tensor network is typically exponentially smaller compared to that of its ambient tensor spaces, this agrees with observations from matrix and tensor completion where a low-rank matrix or tensor has to satisfy an additional \emph{incoherence condition} to guarantee a reduced sample complexity~\cite{Candes2010convexRelaxationMatrixCompletion,yuan2014tensor}.
This means that not every tensor can be recovered with a reasonable number of samples.

From the numerical experiments in~\cite{ESTW19} we know that a rank-adaptive iterative hard thresholding algorithm~\cite{eigel2018vmc0} is capable of recovering solutions of high-dimensional parametric partial differential equations with surprisingly few samples.
This indicates that the regularity of the sought function $u$ has a beneficial effect on the sample complexity which is not captured by the theory in \cite{EST20}.

In this article we expand upon the basic results from~\cite{EST20} and show that a generalized incoherence condition can be derived for general nonlinear model classes.
From this result we derive design principles for algorithms for empirical norm minimization
and present an adapted version of the \emph{alternating least squares} (ALS) algorithm~\cite{oseledets_2011_tensor_trains,holtz_alternating_2012} for low-rank tensor recovery.
Finally, we perform numerical experiments to illustrate the remarkable performance of the derived algorithm compared to other state of the art methods.

Our deliberation is motivated by the application to the model class of tensor networks.
A short introduction to this topic and its applications is provided in Appendix~\ref{sec:tensor_networks} and understanding the graphical notation introduced therein is presumably necessary to understand \cref{sec:restricted_als}.
For a comprehensive discussion we refer the reader to~\cite{Hackbusch_2012_book,Grasedyck_2013_survey}.

\begin{remark}
    For the sake of clarity, we consider the special case $\norm{\bullet} = \norm{\bullet}_{L^2\pars{Y,\rho}}$.
    However, we conjecture that our results still hold, in a similar form, in the more general setting of~\cite{EST20}. %
    Moreover, although the present discussion is motivated by empirical norm minimization, the RIP also guarantees the convergence of $\ell^1$-minimization~\cite{candes06recovery}, nuclear norm minimization~\cite{mohan2010new} and iterative hard thresholding~\cite{rauhut2017iht}.
    Finally, note that the theory is not restricted to the minimization of errors $\norm{u-v}$ but is also applicable to the minimization of residuals $\norm{u-Lv}$ as done, for example for the approximation of the stationary Bellman equation in~\cite{oster2021approximating}.
\end{remark}

\paragraph{Structure.}
The remainder of the paper is organized as follows.
In \cref{sec:convergence_bounds} we recall and expand upon the basic results from~\cite{EST20} and derive calculus rules for the computation of the variation constant.
This section culminates in a proof that shows the equality of the worst-case probability estimates for tensor networks and their ambient linear spaces.
\cref{sec:local_variation_constant} starts with an example that illustrates this result.
We observe that the worst-case bounds are reachable only when the elements in the model class can be arbitrarily far away from the best approximation $u_{\mcal{M}}$.
Building on this insight we define a local version of the model class $\mcal{M}^{\mathrm{loc}}_{u,r} = \braces{v\in\mcal{M} : \norm{u-v}\le r}$ and prove the main theorem of this work:
When the model class $\mcal{M}$ is locally linearizable in the neighborhood of $u_{\mcal{M}}$, then the sample complexity of $\mcal{M}^{\mathrm{loc}}_{u,r}$ can be estimated by the sample complexity of the tangent space for sufficiently small $r$.
We conclude with an illustration of this theorem in the setting of low-rank matrices.
These results are discussed further in \cref{sec:restricted_als}.
We argue that the requirement of local linearizability is too stringent for most practical applications and formulate design principles that ought to be fulfilled by recovery algorithms.
From these principles we derive a new optimization algorithm for tensor networks.
We conclude this paper with promising experimental results in \cref{sec:experiments}.

\paragraph{Notation.}
Denote the set of integers from $1$ through $d$ by $\bracs{d}$.
For $d\in\mbb{N}$ and any $i\in\bracs{d}$ we denote by $e_i\in\mbb{R}^d$ the $i$\textsuperscript{th} standard basis vector and define $\boldsymbol{1} := \pars{1 \ \cdots \ 1}^\intercal\in\mbb{R}^d$.
For any $v\in\mbb{R}^d$, define $\operatorname{supp}\pars{v} := \braces{j\in\bracs{d} : v_j\ne0}$.

For any set $X$, the notation $\inner{X}$ denotes the linear span of $X$, $\cl\pars{X}$ denotes its closure and $\mfrak{P}\pars{X}$ denotes the set of all subsets of $X$.
If $\pars{X,p}$ is a metric space, then $\mfrak{C}\pars{X}$ denotes the set of non-empty, compact subsets of $X$ and $S\pars{x,r}\subseteq X$ and $B\pars{x,r}\subseteq X$ denote the sphere and ball of radius $r>0$ and with center $x\in X$, respectively.
Since the metric space $X$ should always be clear from context, we do not include it in the notation for $S\pars{x,r}$ and $B\pars{x,r}$.

In \cref{thm:K_properties} we require the concept of a continuous function that operates on sets.
The relevant topologies are induced by the following two metrics.
\begin{definition}[Hausdorff distance]
    Let $\pars{M, d}$ be a metric space.
    The function $d_{\mathrm{H}} : \mfrak{C}\pars{M}^2 \to [0,\infty)$
    \begin{equation}
        d_{\mathrm{H}}\pars{X,Y} := \max\braces{\sup_{x\in X}\inf_{y\in Y} d\pars{x,y}, \sup_{y\in Y}\inf_{x\in X} d\pars{x,y}}
    \end{equation}
    defines a metric on $\mfrak{C}\pars{M}$ and a pseudometric on $\mfrak{P}\pars{M}$.
\end{definition}

Note that this is not an appropriate metric when the sets $X$ and $Y$ are cones, since in this case
\begin{equation}
    d_{\mathrm{H}}\pars{X,Y} = \begin{cases}
                                   0 & \cl\pars{X} = \cl\pars{Y} \\
                                   \infty & \text{otherwise}
                               \end{cases} .
\end{equation}
In the following we define $\operatorname{Cone}\pars{X} := \braces{\lambda x : \lambda >0, x\in X}$ and denote by $\operatorname{Cone}\pars{\mfrak{P}\pars{M}}$ the set of all cones in $M$.
Since conic set are uniquely defined by their intersection with the unit sphere we can define a more suitable (pseudo-)metric by considering the Hausdorff distance between these intersections.
This metric is called the truncated Hausdorff distance.
\cite{seeger_2010_Hausdorff_distance}
\begin{definition}[truncated Hausdorff distance]
    The function $d_{\mathrm{tH}} : \operatorname{Cone}\pars{\mfrak{P}\pars{M}}^2 \to [0,\infty)$, defined by
    \begin{equation}
        d_{\mathrm{tH}}\pars{X,Y} := d_{\mathrm{H}}\pars{S\pars{0,1}\cap X, S\pars{0,1}\cap Y},
    \end{equation}
    is a pseudometric on $\operatorname{Cone}\pars{\mfrak{P}\pars{M}}$.
\end{definition}

\section{Convergence bounds}\label{sec:convergence_bounds}

The restricted isometry property can be used to show the following equivalence. %

\begin{theorem} \label{thm:empirical_projection_error}
    If $\operatorname{RIP}_{\braces{u_{\mcal{M}}}-\pars{\mcal{M}\cup\braces{u}}}\pars{\delta}$ holds then
    \begin{equation}
        \norm{u-u_{\mcal{M}}} \le \norm{u - u_{\mcal{M},\boldsymbol{y}}} \le \pars*{1+2\sqrt{\frac{1+\delta}{1-\delta}}}\norm{u - u_{\mcal{M}}} .
    \end{equation}
\end{theorem}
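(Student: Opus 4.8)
The plan is to treat the two inequalities separately: the lower bound is essentially definitional, while the upper bound is where the RIP does all the work. For $\norm{u-u_{\mcal{M}}}\le\norm{u-u_{\mcal{M},\boldsymbol{y}}}$ I would simply note that $u_{\mcal{M},\boldsymbol{y}}\in\mcal{M}$ is a feasible point for the exact problem~\eqref{eq:min}, whose minimizer $u_{\mcal{M}}$ achieves the smallest possible value of $\norm{u-v}$ over $\mcal{M}$; hence the true error cannot be smaller at $u_{\mcal{M},\boldsymbol{y}}$. No RIP is required here.

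The substance is the upper bound, and the guiding observation is that every difference I need to estimate lies in the admissible set $A=\braces{u_{\mcal{M}}}-\pars{\mcal{M}\cup\braces{u}}$ on which the RIP is assumed to hold. I would begin with the exact triangle inequality
\[
    \norm{u-u_{\mcal{M},\boldsymbol{y}}} \le \norm{u-u_{\mcal{M}}} + \norm{u_{\mcal{M}}-u_{\mcal{M},\boldsymbol{y}}},
\]
reducing the claim to showing $\norm{u_{\mcal{M}}-u_{\mcal{M},\boldsymbol{y}}}\le 2\sqrt{(1+\delta)/(1-\delta)}\,\norm{u-u_{\mcal{M}}}$. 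Since $u_{\mcal{M},\boldsymbol{y}}\in\mcal{M}$, the vector $u_{\mcal{M}}-u_{\mcal{M},\boldsymbol{y}}$ belongs to $A$, so the lower RIP bound transfers this quantity into the empirical norm via $\norm{u_{\mcal{M}}-u_{\mcal{M},\boldsymbol{y}}}\le(1-\delta)^{-1/2}\norm{u_{\mcal{M}}-u_{\mcal{M},\boldsymbol{y}}}_{\boldsymbol{y}}$. This is the crucial move, because only in the empirical norm can I exploit the optimality of $u_{\mcal{M},\boldsymbol{y}}$.

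Next I would route a triangle inequality through $u$,
\[
    \norm{u_{\mcal{M}}-u_{\mcal{M},\boldsymbol{y}}}_{\boldsymbol{y}} \le \norm{u_{\mcal{M}}-u}_{\boldsymbol{y}} + \norm{u-u_{\mcal{M},\boldsymbol{y}}}_{\boldsymbol{y}},
\]
and bound each summand by $\sqrt{1+\delta}\,\norm{u-u_{\mcal{M}}}$. For the first, $u_{\mcal{M}}-u\in A$, so the upper RIP bound gives $\norm{u_{\mcal{M}}-u}_{\boldsymbol{y}}\le\sqrt{1+\delta}\,\norm{u_{\mcal{M}}-u}$. For the second, the empirical optimality in~\eqref{eq:min_emp} together with the feasibility of $u_{\mcal{M}}$ yields $\norm{u-u_{\mcal{M},\boldsymbol{y}}}_{\boldsymbol{y}}\le\norm{u-u_{\mcal{M}}}_{\boldsymbol{y}}$, to which I again apply the upper RIP bound. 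Summing the two contributions gives $2\sqrt{1+\delta}\,\norm{u-u_{\mcal{M}}}$, and multiplying by the earlier factor $(1-\delta)^{-1/2}$ produces exactly the stated constant.

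I do not expect a genuine obstacle; the proof is a short chain of triangle inequalities interleaved with RIP bounds. The only care needed is bookkeeping: at each RIP application I must confirm that the argument genuinely lies in $A$---which holds because $u_{\mcal{M},\boldsymbol{y}}\in\mcal{M}$ gives $u_{\mcal{M}}-u_{\mcal{M},\boldsymbol{y}}\in A$, and because $u\in\mcal{M}\cup\braces{u}$ gives $u_{\mcal{M}}-u\in A$---and that both $\norm{\bullet}$ and $\norm{\bullet}_{\boldsymbol{y}}$ are sign-symmetric, which lets me freely match the orientation $\norm{u-u_{\mcal{M}}}=\norm{u_{\mcal{M}}-u}$ used in defining $A$.
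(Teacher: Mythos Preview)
Your proof is correct. The paper's own proof merely observes that $\operatorname{RIP}_{\braces{u_{\mcal{M}}}-\pars{\mcal{M}\cup\braces{u}}}\pars{\delta}$ is equivalent to the conjunction of $\operatorname{RIP}_{\braces{u_{\mcal{M}}}-\mcal{M}}\pars{\delta}$ and $\operatorname{RIP}_{\braces{u_{\mcal{M}}-u}}\pars{\delta}$ and then defers to Theorem~2.12 in~\cite{EST20}; your argument supplies, self-contained, exactly the standard triangle-inequality-plus-RIP chain that this cited result encapsulates, so the approaches coincide in substance.
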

\begin{proof}
    Observe that $\operatorname{RIP}_{\braces{u_{\mcal{M}}}-\pars{\mcal{M}\cup\braces{u}}}\pars{\delta}$ holds if and only if $\operatorname{RIP}_{\braces{u_{\mcal{M}}}-\mcal{M}}$ and $\operatorname{RIP}_{\braces{u_{\mcal{M}}-u}}$ hold.
    The theorem then follows from Theorem~2.12 in~\cite{EST20}.
\end{proof}

\Cref{thm:empirical_projection_error} holds for any choice of $\boldsymbol{y}\in Y^n$, but we assume that the $\boldsymbol{y}_i$ are i.i.d.\ random variables.
This means that $\operatorname{RIP}_A\pars{d}$ is a random variable as well and its probability can be bounded by a standard concentration of measure argument.
To do this, we define the normed space
\begin{equation}
    \mcal{V}_{w,\infty} := \braces{v\in\mcal{V} : \norm{v}_{w,\infty} < \infty}
    \qquad\text{where}\qquad
    \norm{v}_{w,\infty} := \esssup_{y\in Y} \sqrt{w\pars{y}} \abs{v\pars{y}} .
\end{equation}
The \emph{variation function} of  a model class $A$ is then given by 
\begin{equation}
    \mfrak{K}_A\pars{y} := \sup_{a\in U\pars{A}} \abs{a\pars{y}}^2
    \quad\text{where}\quad
    U\pars{A} := \braces*{\tfrac{u}{\norm{u}} : u\in A\!\setminus\!\braces{0}}
\end{equation}
and provides a point-wise bound for the relative oscillation\footnote{The oscillation of $a\in A$ is defined by $\operatorname{osc}\pars{a} := \sup_{y\in Y}a\pars{y} - \inf_{y\in Y} a\pars{y}$. The relative oscillation is bounded by $\operatorname{osc}\pars{a}/\norm{a} \le 2 \norm{a}_\infty/\norm{a}$.} of the functions in $A$.
\begin{remark}
    The variation function can be seen as the inverse of a generalized Christoffel function~\cite{nevai_1986_Christoffel_functions}.
\end{remark}

With this definition we can state the following bound on the probability of $\operatorname{RIP}_A\pars{\delta}$.
\begin{theorem}[Theorem~2.7 and Corollary~2.10 in~\cite{EST20}] \label{thm:P_RIP}
    For any $A\subseteq \mcal{V}$ and $\delta>0$ there exists $C$ such that
    \begin{equation}
        \mbb{P}\bracs{\operatorname{RIP}_A\pars{\delta}} \ge 1 - C \exp\pars*{-\tfrac{n}{2}\pars{\tfrac{\delta}{\norm{\mfrak{K}_A}_{w,\infty}}}^2} .
    \end{equation}
    The constant $C$ is independent of $n$ and depends only polynomially on $\delta$ and $\norm{\mfrak{K}_A}_{w,\infty}^{-1}$ if $\dim\pars{\inner{A}}<\infty$.
    \note{This must be the case since we know that $\nu\pars{U\pars{A}, r} < \nu\pars{U\pars{\inner{A}}, r} \lesssim r^{-M}$ where $M=\dim\pars{\inner{A}}$.}
\end{theorem}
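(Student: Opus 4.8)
The plan is to reduce the uniform event $\operatorname{RIP}_A\pars{\delta}$ to a pointwise concentration inequality and then to make it uniform over $U\pars{A}$ by a covering argument. First I would normalise: for $a\in U\pars{A}$ put $g_a\pars{y}:=w\pars{y}\abs{a\pars{y}}^2$. Because $y^i\sim w^{-1}\rho$, the weight cancels the sampling density, so each $g_a\pars{y^i}$ is a nonnegative i.i.d.\ random variable with mean $\int_Y\abs{a}^2\dx[\rho]=\norm{a}^2=1$ and with range bounded by the $w$-weighted supremum of $\abs{a}^2$, which is controlled uniformly over $U\pars{A}$ by $\norm{\mfrak{K}_A}_{w,\infty}$. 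In this notation $\operatorname{RIP}_A\pars{\delta}$ is precisely the event $\sup_{a\in U\pars{A}}\abs{\tfrac1n\sum_{i=1}^n g_a\pars{y^i}-1}\le\delta$. For a single fixed $a$ the empirical mean $\tfrac1n\sum_i g_a\pars{y^i}$ concentrates around $1$ by a standard Hoeffding (or Bernstein) bound for bounded i.i.d.\ variables, yielding $\mbb{P}\bracs{\abs{\tfrac1n\sum_i g_a\pars{y^i}-1}>t}\le2\exp\pars{-cn t^2/\norm{\mfrak{K}_A}_{w,\infty}^2}$ for a universal constant $c>0$.

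To upgrade this to a bound uniform over the (generally nonlinear) set $U\pars{A}$, I would cover $U\pars{A}$ by a finite $\epsilon$-net in the metric of $\mcal{V}_{w,\infty}$. The decisive continuity estimate is that $a\mapsto g_a$ is Lipschitz in this metric: for $a,a'\in U\pars{A}$,
\[
    \esssup_{y\in Y} w\pars{y}\abs[\big]{\abs{a\pars{y}}^2-\abs{a'\pars{y}}^2}
    \le\norm{a-a'}_{w,\infty}\pars{\norm{a}_{w,\infty}+\norm{a'}_{w,\infty}}
    \le 2\sqrt{\norm{\mfrak{K}_A}_{w,\infty}}\;\norm{a-a'}_{w,\infty}.
\]
Consequently both $\tfrac1n\sum_i g_a\pars{y^i}$ and its mean change by at most $2\sqrt{\norm{\mfrak{K}_A}_{w,\infty}}\,\epsilon$ as $a$ ranges over an $\epsilon$-ball, so choosing $\epsilon\sim\delta/\sqrt{\norm{\mfrak{K}_A}_{w,\infty}}$ confines this fluctuation to a constant fraction of $\delta$.

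A union bound of the pointwise estimate at threshold $\delta/2$ over the net then gives $\mbb{P}\bracs{\operatorname{RIP}_A\pars{\delta}}\ge1-2\,\nu\pars{U\pars{A},\epsilon}\exp\pars{-c'n\pars{\delta/\norm{\mfrak{K}_A}_{w,\infty}}^2}$, where $\nu\pars{U\pars{A},\epsilon}$ denotes the covering number of $U\pars{A}$ at scale $\epsilon$ in the $\mcal{V}_{w,\infty}$-metric. This is the asserted form with $C=2\nu\pars{U\pars{A},\epsilon}$, which is manifestly independent of $n$. For the refined claim I would bound the covering number of the nonlinear set by that of its span: since $U\pars{A}\subseteq U\pars{\inner{A}}$ and the unit sphere of an $M$-dimensional normed space admits an $\epsilon$-net of cardinality $\lesssim\pars{3/\epsilon}^M$ with $M=\dim\inner{A}$, one obtains $\nu\pars{U\pars{A},\epsilon}\le\nu\pars{U\pars{\inner{A}},\epsilon}\lesssim\epsilon^{-M}$; with $\epsilon\sim\delta/\sqrt{\norm{\mfrak{K}_A}_{w,\infty}}$ this yields the polynomial dependence of $C$ on $\delta$ and $\norm{\mfrak{K}_A}_{w,\infty}$ asserted in the statement.

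The step I expect to be the main obstacle is exactly this passage from pointwise to uniform concentration over a nonlinear model class. The delicate points are the choice of the metric $\mcal{V}_{w,\infty}$, in which $a\mapsto w\abs{a}^2$ is Lipschitz with a constant governed by $\norm{\mfrak{K}_A}_{w,\infty}$, so that the net radius may be taken proportional to $\delta/\sqrt{\norm{\mfrak{K}_A}_{w,\infty}}$ without degrading the exponential rate, and the control of the covering number of the nonlinear $U\pars{A}$ through the linear embedding $U\pars{A}\subseteq U\pars{\inner{A}}$. In the infinite-dimensional case the finiteness of $C$ ultimately rests on the total boundedness of $U\pars{A}$ in this metric, which is where any further structural hypothesis on $A$ would enter.
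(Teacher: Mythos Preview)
The paper does not supply its own proof of this theorem; it is quoted verbatim from the earlier work~\cite{EST20} (Theorem~2.7 and Corollary~2.10 there), so there is no in-paper argument to compare against. That said, your outline is the standard concentration-plus-covering proof that underlies such statements and is essentially correct: pointwise Hoeffding for the bounded variables $g_a\in[0,\norm{\mfrak{K}_A}_{w,\infty}]$ at threshold $\delta/2$ gives exactly the exponent $-\tfrac{n}{2}\pars{\delta/\norm{\mfrak{K}_A}_{w,\infty}}^2$; your Lipschitz estimate for $a\mapsto w\abs{a}^2$ in $\mcal{V}_{w,\infty}$ with constant $2\sqrt{\norm{\mfrak{K}_A}_{w,\infty}}$ is the right perturbation bound; and the choice $\epsilon\sim\delta/\sqrt{\norm{\mfrak{K}_A}_{w,\infty}}$ followed by a union bound over an $\epsilon$-net yields $C=2\,\nu\pars{U\pars{A},\epsilon}$ independent of~$n$. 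The marginal note embedded in the statement (``$\nu\pars{U\pars{A},r}<\nu\pars{U\pars{\inner{A}},r}\lesssim r^{-M}$ where $M=\dim\pars{\inner{A}}$'') confirms that bounding the covering number of the nonlinear $U\pars{A}$ through its linear span is precisely the mechanism the authors intend for the polynomial dependence of $C$, which is exactly what you do in your final paragraph.
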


\begin{remark}
    Note that \cref{thm:P_RIP} also provides worst-case bounds for deterministic algorithms.
    If $\mbb{P}\bracs{\operatorname{RIP}_{\mcal{M}-\mcal{M}}\pars{\delta}} > 0$, we can find $\boldsymbol{y}\in Y^n$ such that $\operatorname{RIP}_{\mcal{M}-\mcal{M}}\pars{\delta}$ is satisfied.
    Thus, the conditions for \cref{thm:empirical_projection_error} are satisfied for any $u\in\mcal{M}$ and hence there exists a deterministic algorithm to exactly recover any $u\in\mcal{M}$ using $n$ function evaluations.
\end{remark}

From the bound in \cref{thm:P_RIP} we can see that a low value of $\norm{\mfrak{K}_A}_{w,\infty}$ is necessary to obtain a large probability for $\operatorname{RIP}_A\pars{\delta}$.
Together, \cref{thm:empirical_projection_error} and \cref{thm:P_RIP} allow us to compute the probability with which the best approximation $u_{\mcal{M}}$ of a function $u$ may be recovered exactly in a given model class $\mcal{M}$.
The conditions of this theorem are satisfied by many model classes, such as finite dimensional vector spaces, sets of sparse vectors or sets of low-rank tensors.

The variation function allow us to compute the optimal sampling density of a set $A$ as stated in the subsequent theorem.
\begin{theorem}[Theorem~3.1 in~\cite{EST20}] \label{thm:optimal_weight_function}
    $\mfrak{K}_A$ is $\rho$-measurable and
    \note{$\mcal{V}=L^2$ is separable}
    \begin{equation}
        \norm{w\mfrak{K}_A}_{L^\infty\pars{Y,\rho}} \ge \norm{\mfrak{K}_A}_{L^1\pars{Y,\rho}}
    \end{equation}
    for any weight function $w$.
    The lower bound is attained by the weight function $w=\frac{\norm{\mfrak{K}_A}_{L^1\pars{Y,\rho}}}{\mfrak{K}_A}$.
\end{theorem}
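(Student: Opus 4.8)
The plan is to separate the statement into its three assertions—measurability of $\mfrak{K}_A$, the inequality, and its attainment—and to observe that only the first is genuinely delicate. Both the inequality and the attainment are short consequences of the normalisation $\int_Y w^{-1}\dx[\rho]=1$ built into the definition of a weight function, so I would dispatch them first and then invest the effort in measurability.

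For the inequality, set $M:=\norm{w\mfrak{K}_A}_{L^\infty\pars{Y,\rho}}$. By definition of the essential supremum one has $w\mfrak{K}_A\le M$, hence $\mfrak{K}_A\le M\,w^{-1}$ $\rho$-almost everywhere (the set $\braces{w=0}$ contributes nothing after the routine null-set bookkeeping). Integrating against $\rho$ and using $\int_Y w^{-1}\dx[\rho]=1$ yields
\[
    \norm{\mfrak{K}_A}_{L^1\pars{Y,\rho}}=\int_Y\mfrak{K}_A\dx[\rho]\le M\int_Y w^{-1}\dx[\rho]=M,
\]
which is precisely the claimed bound. For attainment I would simply substitute $w=\norm{\mfrak{K}_A}_{L^1\pars{Y,\rho}}/\mfrak{K}_A$: then $w^{-1}=\mfrak{K}_A/\norm{\mfrak{K}_A}_{L^1\pars{Y,\rho}}$ integrates to $1$ so $w$ is admissible, while $w\mfrak{K}_A\equiv\norm{\mfrak{K}_A}_{L^1\pars{Y,\rho}}$ is constant and therefore equals its own essential supremum, showing the bound is sharp.

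The main obstacle is the measurability of $\mfrak{K}_A$, since it is defined as a supremum of the measurable maps $y\mapsto\abs{a\pars{y}}^2$ over the generally uncountable index set $U\pars{A}$, and uncountable suprema of measurable functions need not be measurable. Here I would exploit the separability of $\mcal{V}=L^2\pars{Y,\rho}$: as a subset of a separable metric space, $U\pars{A}$ is itself separable and thus admits a countable dense subset $\braces{a_k}_{k\in\mbb{N}}$. The countable supremum $\sup_k\abs{a_k\pars{y}}^2$ is then manifestly $\rho$-measurable, and it remains to prove the pointwise identity $\mfrak{K}_A\pars{y}=\sup_{k\in\mbb{N}}\abs{a_k\pars{y}}^2$ for $\rho$-a.e.\ $y$, reducing the uncountable supremum to a countable one. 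This last step is the crux: it requires that norm convergence $a_k\to a$ be compatible with pointwise evaluation (e.g.\ that it forces $\abs{a_k\pars{y}}\to\abs{a\pars{y}}$ along a subsequence for a.e.\ $y$), so that no value $\abs{a\pars{y}}^2$ with $a\in U\pars{A}$ can exceed the countable supremum on a set of positive measure.
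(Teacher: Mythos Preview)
The paper does not supply its own proof of this statement; it is quoted verbatim as Theorem~3.1 of the cited reference~\cite{EST20}, so there is nothing in the present paper to compare against.

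Evaluated on its own, your proposal is correct and standard. The inequality and the attainment are exactly the one-line computations you give, both driven by the normalisation $\int_Y w^{-1}\dx[\rho]=1$. For measurability, the separability route is the right one and your sketch is essentially complete. The only refinement I would suggest is one of interpretation rather than substance: since elements of $L^2\pars{Y,\rho}$ are equivalence classes, the pointwise supremum defining $\mfrak{K}_A$ is already to be read as the essential (lattice) supremum of the family $\braces{\abs{a}^2:a\in U\pars{A}}$. Your construction---take a countable dense $\braces{a_k}\subseteq U\pars{A}$, set $S:=\sup_k\abs{a_k}^2$, and use that $L^2$-convergence forces a.e.\ convergence along a subsequence---shows precisely that $S$ is this essential supremum: for each fixed $a\in U\pars{A}$ one has $\abs{a}^2\le S$ off a null set $N_a$. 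Your final sentence already states this correctly; there is no need (and no way, in general) to merge the uncountably many $N_a$ into a single null set, and the essential-supremum reading makes that unnecessary.
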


The subsequent theorem provide us with calculus rules for the computation of $\mfrak{K}$ which we will frequently use in the remainder of this work.
\begin{theorem}[Basic properties of $\mfrak{K}$] \label{thm:K_properties}
    Let $A,B\subseteq\mcal{V}_{w,\infty}$ and $\mcal{A}\subseteq\mfrak{P}\pars{\mcal{V}_{w,\infty}}$.
    Then the following statements hold.
    \begin{thmenum}
        \item %
        $\mfrak{K}_{\bigcup\!\mcal{A}} = \sup_{A\in\mcal{A}} \mfrak{K}_{A}$, where $\bigcup\!\mcal{A} := \bigcup_{A\in\mcal{A}}A$.
        \label{thm:K_properties:union}
        \item $\mfrak{K}_{A} = \mfrak{K}_{\operatorname{cl}\pars{A}}$. \label{thm:K_properties:closure}
        \item $\mfrak{K}_{\bullet} : \mfrak{C}\pars{\mcal{V}_{w,\infty}\setminus \braces{0}} \to \mcal{V}_{w^2,\infty}$ is continuous with respect to the Hausdorff metric. \label{thm:K_properties:continuity:compact_sets}
        \note{5.\ $\not\Rightarrow$ 4., since 5.\ considers sequences of compact sets which vonverge to $\braces{0}$. This, however, is not the case in 4.}\item $\mfrak{K}_{\bullet} : \mfrak{P}\pars{\mcal{V}_{w,\infty}\setminus B\pars{0,r}} \to \mcal{V}_{w^2,\infty}$ is continuous with respect to the Hausdorff pseudometric for all $r>0$. \label{thm:K_properties:continuity:all_sets}
        \item $\mfrak{K}_{\bullet} : \operatorname{Cone}\pars{\mfrak{P}\pars{\mcal{V}_{w,\infty}}} \to \mcal{V}_{w^2,\infty}$ is continuous with respect to the truncated Hausdorff pseudometric. \label{thm:K_properties:continuity:cones}
        \item If $A\perp B$ then $\mfrak{K}_{A + B} \le \mfrak{K}_{A} + \mfrak{K}_{B}$. \label{thm:K_properties:sum}
        \note{In genereal, equality can not hold. To see this, consider $\mfrak{K}_{\braces{b_1+\alpha b_2}}$ for $b_1 \equiv 1$ and $b_2 = 2\chi_{\bracs{-\frac{1}{2},\frac{1}{2}}}-1$ in $L^2\pars{\bracs{-1,1}, \frac{1}{2}\dx}$. Then $\mfrak{K}_{\braces{b_1+\alpha b_2}} = \frac{\pars{1+\alpha}^2}{1+\alpha^2}$ but $\mfrak{K}_{\braces{b_1}} + \mfrak{K}_{\braces{\alpha b_2}} = 2$.}
        \item $\mfrak{K}_{A\oplus B} = \mfrak{K}_{A} + \mfrak{K}_{B}$. \label{thm:K_properties:direct_sum}
        \item If $A\indep B$ then $\mfrak{K}_{A\cdot B} = \mfrak{K}_{A} \cdot \mfrak{K}_{B}$. \label{thm:K_properties:product}
        \item $\mfrak{K}_{A\otimes B} = \mfrak{K}_{A} \cdot \mfrak{K}_{B}$. \label{thm:K_properties:tensor_product}
    \end{thmenum}
    Where the sum ($+$), product ($\cdot$), the stochastic independence ($\indep$) and the orthogonality ($\perp$) of sets have to be understood element-wise. \proofappendix{proof:thm:K_properties}
\end{theorem}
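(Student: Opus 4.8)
The nine claims split into \emph{algebraic} identities (\cref{thm:K_properties:union,thm:K_properties:sum,thm:K_properties:direct_sum,thm:K_properties:product,thm:K_properties:tensor_product}) and \emph{topological} statements (\cref{thm:K_properties:closure,thm:K_properties:continuity:compact_sets,thm:K_properties:continuity:all_sets,thm:K_properties:continuity:cones}), and I would treat the two groups separately. The common tool for the identities involving subspaces (here $\oplus$ and $\otimes$ denote the orthogonal direct sum and tensor product of subspaces, the only operations in the list not declared element-wise) is a reproducing-kernel characterization that I would establish first: if $W\subseteq\mcal{V}$ is a subspace with orthonormal basis $\pars{\phi_i}_i$, then writing $v=\sum_i c_i\phi_i$ and applying Cauchy--Schwarz in the coefficients, with equality for $c_i\propto\overline{\phi_i\pars{y}}$, gives $\mfrak{K}_W\pars{y}=\sum_i\abs{\phi_i\pars{y}}^2$ for almost every $y$. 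This formula makes \cref{thm:K_properties:direct_sum,thm:K_properties:tensor_product} immediate: concatenating orthonormal bases of the orthogonal summands (resp.\ taking the product basis $\braces{\phi_i\otimes\psi_j}$) splits the kernel sum additively (resp.\ multiplicatively).

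For the remaining identities I would argue directly from the definition of $U\pars{\bullet}$. \Cref{thm:K_properties:union} is nothing but $U\pars{\bigcup\mcal{A}}=\bigcup_{A\in\mcal{A}}U\pars{A}$ together with the fact that a supremum over a union equals the supremum of the suprema. For \cref{thm:K_properties:sum} I would fix $a\in A$, $b\in B$ with $a\perp b$, so that $\norm{a+b}^2=\norm{a}^2+\norm{b}^2$, and combine $\abs{a\pars{y}+b\pars{y}}^2\le\pars{\abs{a\pars{y}}+\abs{b\pars{y}}}^2$ with the Cauchy--Schwarz inequality $\pars{x+z}^2\le\pars{\tfrac{x^2}{\alpha}+\tfrac{z^2}{\beta}}\pars{\alpha+\beta}$ at $\alpha=\norm{a}^2$, $\beta=\norm{b}^2$ to obtain $\tfrac{\abs{a\pars{y}+b\pars{y}}^2}{\norm{a+b}^2}\le\tfrac{\abs{a\pars{y}}^2}{\norm{a}^2}+\tfrac{\abs{b\pars{y}}^2}{\norm{b}^2}$; taking suprema gives the subadditive bound, and the strictness in the margin example reflects that general sets do not permit the simultaneous optimal scaling that the subspace structure in \cref{thm:K_properties:direct_sum} does. \Cref{thm:K_properties:product} uses that $A\indep B$ means the factors depend on independent coordinate blocks, so $\rho$ is a product measure there; Fubini yields $\norm{ab}^2=\norm{a}^2\norm{b}^2$, and since the variables separate the defining supremum factorizes into $\mfrak{K}_A\pars{y_1}\mfrak{K}_B\pars{y_2}$.

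The topological statements all rest on the local Lipschitz estimate
\begin{equation}
    \norm{\abs{a}^2-\abs{a'}^2}_{w^2,\infty}=\norm{\abs{a-a'}\,\abs{a+a'}}_{w^2,\infty}\le\norm{a-a'}_{w,\infty}\pars{\norm{a}_{w,\infty}+\norm{a'}_{w,\infty}},
\end{equation}
which controls the map $a\mapsto\abs{a}^2$ from $\mcal{V}_{w,\infty}$ to $\mcal{V}_{w^2,\infty}$. Because the sets in \cref{thm:K_properties:continuity:compact_sets,thm:K_properties:continuity:all_sets} are bounded away from $0$, normalization $u\mapsto u/\norm{u}$ is Lipschitz and transports a Hausdorff bound on $A,B$ to one on $U\pars{A},U\pars{B}$; selecting, for each $a\in U\pars{A}$, a nearby $b\in U\pars{B}$ then yields $w\abs{a}^2\le w\abs{b}^2+\varepsilon\le w\,\mfrak{K}_B+\varepsilon$ and, symmetrically, the reverse, giving $\norm{\mfrak{K}_A-\mfrak{K}_B}_{w^2,\infty}\le\varepsilon$. \Cref{thm:K_properties:closure} is the limiting case in which $U\pars{A}$ is Hausdorff-dense in $U\pars{\cl\pars{A}}$, and \cref{thm:K_properties:continuity:cones} reduces to \cref{thm:K_properties:continuity:all_sets} via the definition of $d_{\mathrm{tH}}$ after noting $\mfrak{K}_X=\mfrak{K}_{S\pars{0,1}\cap X}$ for a cone $X$, whose spherical section is bounded away from $0$.

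The step I expect to be the main obstacle is the measure-theoretic aggregation of these pointwise bounds. The inequality $w\abs{a}^2\le w\,\mfrak{K}_B+\varepsilon$ holds only outside an $a$-dependent null set, and the defining supremum ranges over the \emph{uncountable} family $U\pars{A}$, so a naive supremum would risk an uncountable union of null sets. The fix is to pass to a countable $\norm{\bullet}_{w,\infty}$-dense subfamily, using separability of $\mcal{V}=L^2$ exactly as in the measurability part of \cref{thm:optimal_weight_function}; the Lipschitz estimate guarantees that the countable supremum still equals $\mfrak{K}_A$ almost everywhere, so only a countable — hence null — union of exceptional sets is incurred. Reconciling $L^2$-separability with the $\norm{\bullet}_{w,\infty}$-topology, and handling the degeneracy of normalization near $0$ that forces the exclusion of $B\pars{0,r}$ in \cref{thm:K_properties:continuity:all_sets} and the restriction to cones in \cref{thm:K_properties:continuity:cones}, is the delicate part of the argument.
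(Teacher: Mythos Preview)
Your proposal is correct, and the overall strategy matches the paper's, but the two diverge in logical ordering and in the key lemma used for the subspace identities. The paper treats \cref{thm:K_properties:sum} and \cref{thm:K_properties:direct_sum} together via a single Rayleigh-quotient argument: writing $v=\vec{v}^\intercal\alpha$ with $\vec{v}\in A\times B$ and $\alpha\in\mbb{R}^2$, orthogonality gives $\norm{v}^2=\alpha^\intercal D\pars{\vec{v}}^2\alpha$, whence the ratio is bounded by the spectral norm $\norm{D\pars{\vec{v}}^{-1}\vec{v}\pars{y}}_2^2 = \mfrak{K}_{\braces{\vec{v}_1}}\pars{y}+\mfrak{K}_{\braces{\vec{v}_2}}\pars{y}$, with equality precisely when $A$ and $B$ are linear (so that the supremum over $\alpha$ is actually taken). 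Only afterwards does the paper extract the Christoffel-type formula $\mfrak{K}_W=\sum_j\abs{P_j}^2$ as a corollary of \cref{thm:K_properties:direct_sum}, and use it for \cref{thm:K_properties:tensor_product}. You reverse this: you prove the kernel formula first by Cauchy--Schwarz in the coefficients, derive \cref{thm:K_properties:direct_sum,thm:K_properties:tensor_product} from it, and handle \cref{thm:K_properties:sum} by a separate scalar Cauchy--Schwarz. Your route is slightly more elementary; the paper's is more unified. For the continuity statements, the paper factorizes $\mfrak{K}_\bullet=\operatorname{sqr}\circ\sup\circ\operatorname{abs}\circ\,U$ and proves each factor continuous in the appropriate Hausdorff sense via general lifting lemmas (continuous maps lift to $\mfrak{C}$, uniformly continuous maps lift to $\mfrak{P}$), whereas you argue directly from the local Lipschitz bound on $a\mapsto\abs{a}^2$ --- essentially the same ingredients, less modularly packaged. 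Finally, the null-set aggregation issue you flag as the main obstacle is in fact not addressed at all in the paper's proof; the paper works pointwise in $y$ throughout and silently relies on the measurability of $\mfrak{K}_A$ established separately (\cref{thm:optimal_weight_function}). Your separability argument is the more careful treatment.
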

As a consequence of \cref{thm:K_properties:union} it follows that $A\subseteq B$ implies $\mfrak{K}_A \le \mfrak{K}_B$.
In combination with \cref{thm:K_properties:product} and \ref{thm:K_properties:sum} this allows for the interpretation of the function $\mfrak{K}_{\bullet}$ as a monotonic and (uniformly) continuous (partially defined) morphism of algebras.
The continuity of $\mfrak{K}$ can, for example, be used to estimate the variation constant numerically, as is done in \cref{sec:algorithm_KUbbone}.
Moreover, by virtue of \cref{thm:optimal_weight_function}, the properties in \cref{thm:K_properties} induce analogous properties of the norm $\norm{\mfrak{K}_A}_{w,\infty}$.
\cref{thm:K_properties:sum} for example, implies that for any linear space $A$ that is spanned by an orthonormal basis $\braces{B_k}_{k=1}^{\dim\pars{A}}$
\begin{equation} \label{KU_properties:dim}
    \norm{\mfrak{K}_{A}}_{w,\infty}
    = \big\Vert\sum_{k=1}^{\dim\pars{A}}\mfrak{K}_{\inner{B_k}}\big\Vert_{w,\infty}
    \le \sum_{k=1}^{\operatorname{dim}\pars{A}} \norm{B_k}_{w,\infty}^2 .
\end{equation}
Finally note, that \cref{thm:optimal_weight_function} and \cref{thm:K_properties} provides calculus rules for the computation of optimal weight functions.
\note{The fact that $\mfrak{K}_{A\otimes B} = \mfrak{K}_{A\cdot B}$ is not a coincidence.
The maximizer in $\mfrak{K}_{A\otimes B}$ is indeed an element of $A\cdot B$.}

\begin{remark}
    A common misconception is that the probability bound in \cref{thm:P_RIP} relies primarily on the metric entropy~\cite{cockreham_2017_reach} of the model class.
    This however is not true, since $\norm{\mfrak{K}_A}_{\infty}$ is independent of the metric entropy of $A$.
    To see this, consider any set $A$ for which $U\pars{A}$ is compact.
    By continuity, there exists $a^*\in A$ such that $\norm{\mfrak{K}_{\braces{a^*}}}_\infty \ge \norm{\mfrak{K}_{\braces{a}}}_\infty$ for all $a\in A$.
    Thus, $\norm{\mfrak{K}_{\braces{a^*}}}_\infty \ge \norm{\mfrak{K}_{\mcal{N}}}_\infty$ for any subclass $\mcal{N}\subseteq\mcal{M}$, independent of its metric entropy.
\end{remark}

We use the remainder of this section to compute the variation function for a generic model class of tensor networks $\mcal{M}$ (cf.~\cref{sec:tensor_networks}).
We do this by proving the sequence of inequalities
\begin{equation} \label{eq:chain}
    \mfrak{K}_{\braces{u_{\inner{\mcal{M}}}} - \inner{\mcal{M}}}
    = \mfrak{K}_{\braces{u_{\mcal{M}}} - \inner{\mcal{M}}}
    \ge \mfrak{K}_{\braces{u_{\mcal{M}}} - \mcal{M}}
    \ge \mfrak{K}_{\mcal{M}}
    = \mfrak{K}_{\inner{\mcal{M}}}
    = \mfrak{K}_{\braces{u_{\inner{\mcal{M}}}} - \inner{\mcal{M}}} .
\end{equation}
The first and the last equality hold, since $\braces{u_{\inner{\mcal{M}}}} - \inner{\mcal{M}} = \inner{\mcal{M}} = \braces{u_{\mcal{M}}} - \inner{\mcal{M}}$.
The remaining relations follow from \cref{thm:K_properties:union}, \cref{prop:KuM=KspanuM} and \cref{prop:KM=KspanM}.

Since the probability of $\operatorname{RIP}_{\braces{u_{\mcal{M}}}-\mcal{M}\cup\braces{u}}$ can not exceed that of $\operatorname{RIP}_{\braces{u_{\mcal{M}}}-\mcal{M}}$, this shows that recovery in any model class of tensor networks $\mcal{M}$ requires roughly the same number of samples as recovery in the ambient space $\inner{\mcal{M}}$.
Since $\norm{\mfrak{K}_{\inner{\mcal{M}}}}_{w,\infty} \ge \operatorname{dim}\pars{\inner{\mcal{M}}}$ grows exponentially with the order of the tensors, this model class may be infeasible for the recovery of certain tensors.
This is not surprising.
In the setting of low-rank matrix and tensor recovery it is well known, that the sought tensor has to satisfy an additional incoherence condition to be recoverable with few samples (cf.~\cite{Candes2010convexRelaxationMatrixCompletion,yuan2014tensor}).
To illustrate this, we provide phase diagrams for the recovery of two different functions in \cref{fig:recovery_phase_diagram}.

\begin{proposition} \label{prop:KuM=KspanuM}
    Let $\mcal{M}$ be conic and symmetric and let $v\in\mcal{V}$.
    Then $\mfrak{K}_{\braces{v}-\mcal{M}} = \mfrak{K}_{\inner{v}+\cl\pars{\mcal{M}}}$.
\end{proposition}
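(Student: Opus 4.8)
The plan is to rewrite both sides as the variation function of one and the same symmetric cone, and then to squeeze that cone between $\inner{v}+\mcal{M}$ and its closure. I would rely on two invariances of $\mfrak{K}$ that are immediate from its definition. First, normalising $\lambda a$ with $\lambda>0$ returns $a/\norm{a}$, so $U\pars{\operatorname{Cone}\pars{A}} = U\pars{A}$ and hence $\mfrak{K}_A = \mfrak{K}_{\operatorname{Cone}\pars{A}}$. Second, since $\abs{a\pars{y}}^2 = \abs{-a\pars{y}}^2$ and $\norm{a}=\norm{-a}$, the union rule \cref{thm:K_properties:union} gives $\mfrak{K}_A = \mfrak{K}_{A\cup\pars{-A}}$. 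Throughout I also use monotonicity ($A\subseteq B\Rightarrow\mfrak{K}_A\le\mfrak{K}_B$, a consequence of \cref{thm:K_properties:union}) and the closure invariance \cref{thm:K_properties:closure}.

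Writing $L:=\braces{v}-\mcal{M}$, the conicity of $\mcal{M}$ gives $\operatorname{Cone}\pars{L} = \braces{\mu v - \mu m : \mu>0,\,m\in\mcal{M}} = \braces{\mu v - m' : \mu>0,\,m'\in\mcal{M}}$, and adjoining $-\operatorname{Cone}\pars{L}$ together with the symmetry $-\mcal{M}=\mcal{M}$ yields the symmetric cone
\[
    C := \operatorname{Cone}\pars{L}\cup\pars{-\operatorname{Cone}\pars{L}} = \braces{\lambda v - m : \lambda\ne 0,\,m\in\mcal{M}}.
\]
By the two invariances above, $\mfrak{K}_{\braces{v}-\mcal{M}} = \mfrak{K}_L = \mfrak{K}_C$, so it remains to identify $\mfrak{K}_C$ with $\mfrak{K}_{\inner{v}+\cl\pars{\mcal{M}}}$.

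This I would do through the chain of inclusions
\[
    C \subseteq \inner{v}+\mcal{M} \subseteq \inner{v}+\cl\pars{\mcal{M}} \subseteq \cl\pars{\inner{v}+\mcal{M}} \subseteq \cl\pars{C},
\]
after which monotonicity together with $\mfrak{K}_{\cl\pars{C}}=\mfrak{K}_C$ and $\mfrak{K}_{\cl\pars{\inner{v}+\mcal{M}}}=\mfrak{K}_{\inner{v}+\mcal{M}}$ (both by \cref{thm:K_properties:closure}) collapses the chain to a single value, giving the claim. The first inclusion is immediate ($\lambda\ne0$ is a special case of $\lambda\in\mbb{R}$, and $\mcal{M}=-\mcal{M}$); the second is trivial; the third holds because $\inner{v}$ is a closed line, so any $\lambda v + c$ with $c=\lim_k m_k\in\cl\pars{\mcal{M}}$ is the limit of $\lambda v + m_k\in\inner{v}+\mcal{M}$.

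The only non-formal step, which I expect to be the crux, is the last inclusion $\cl\pars{\inner{v}+\mcal{M}}\subseteq\cl\pars{C}$, i.e.\ $\inner{v}+\mcal{M}\subseteq\cl\pars{C}$. The set $C$ realises only nonzero coefficients in front of $v$, whereas $\inner{v}+\mcal{M}$ also contains the pure elements of $\mcal{M}$ (the missing ray $\lambda=0$). These are recovered as limits: for $m\in\mcal{M}$ the elements $\tfrac1k v - m\in\operatorname{Cone}\pars{L}\subseteq C$ converge to $-m$, and by symmetry $-m$ exhausts $\mcal{M}$, so $\mcal{M}\subseteq\cl\pars{C}$; combined with $C\subseteq\cl\pars{C}$ this gives $\inner{v}+\mcal{M}=C\cup\mcal{M}\subseteq\cl\pars{C}$. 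This is exactly where both the conic and the symmetric hypotheses are needed, and where the closure invariance of $\mfrak{K}$ makes the single absent direction $\lambda=0$ harmless.
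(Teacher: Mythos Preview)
Your proof is correct and follows essentially the same route as the paper: both use the scale- and sign-invariance of $\mfrak{K}$ (together with the conic and symmetric hypotheses on $\mcal{M}$) to pass from $\braces{v}-\mcal{M}$ to $\pars{\inner{v}\setminus\braces{0}}-\mcal{M}$, and then invoke closure invariance to absorb the missing ray $\lambda=0$ and replace $\mcal{M}$ by $\cl\pars{\mcal{M}}$. The only differences are cosmetic---the paper symmetrises before taking cones whereas you cone first, and the paper compresses your explicit chain of inclusions into a one-line appeal to \cref{lem:closure_of_sum} and \cref{thm:K_properties:closure}.
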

\note{Let $u=\Delta_1 + \Delta_2$ and $\mcal{M}$ be the set of matrices of rank at most $2$.
Then $u_{\mcal{M}} = u$ and the maximum $\sup_{v\in\mcal{M}} \frac{\norm{u-v}_\infty^2}{\norm{u-v}^2} = \frac{\norm{\Delta_1 + \Delta_2 - v}_\infty^2}{\norm{\Delta_1 + \Delta_2 - v}^2} = \frac{\norm{\Delta_1}_\infty^2}{\norm{\Delta_1}^2} = d_1d_2$ is attained for $v=\Delta_2$.
This shows that the inequality can not be an equality!}

To prove \cref{prop:KuM=KspanuM} we need the following lemma.
\begin{lemma} \label{lem:closure_of_sum}
    $\cl\pars{A+B} \supseteq \cl\pars{A}+\cl\pars{B}$ for all sets $A$ and $B$.
\end{lemma}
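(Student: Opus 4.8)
The plan is to prove the set inclusion by the direct element-chasing argument, exploiting that the ambient space is a normed (hence metric) space, so that closures are characterized sequentially and addition is continuous.

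First I would fix an arbitrary element $x \in \cl(A) + \cl(B)$ and write it as $x = a + b$ with $a \in \cl(A)$ and $b \in \cl(B)$; the goal is then to show $x \in \cl(A+B)$. Since $a$ lies in the closure of $A$ in the normed space, there exists a sequence $\pars{a_k}_{k\in\mbb{N}}$ in $A$ with $\norm{a_k - a} \to 0$, and similarly a sequence $\pars{b_k}_{k\in\mbb{N}}$ in $B$ with $\norm{b_k - b} \to 0$.

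Next I would form the sequence $\pars{a_k + b_k}_{k\in\mbb{N}}$, which lies in $A + B$ by the element-wise definition of the sum of sets. By the triangle inequality,
\begin{equation}
    \norm{\pars{a_k + b_k} - x} = \norm{\pars{a_k - a} + \pars{b_k - b}} \le \norm{a_k - a} + \norm{b_k - b} \xrightarrow{k\to\infty} 0 ,
\end{equation}
so $a_k + b_k \to x$. Since $\pars{a_k + b_k}_{k\in\mbb{N}}$ is a sequence in $A + B$ converging to $x$, we conclude $x \in \cl(A+B)$. As $x$ was arbitrary, this establishes $\cl(A)+\cl(B) \subseteq \cl(A+B)$.

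There is no substantive obstacle here: the only point requiring care is the passage to the closure, which is legitimate precisely because we work in a normed space, where sequential closure coincides with topological closure and vector addition is continuous. I would note that the reverse inclusion fails in general (the sum of two closed sets need not be closed), so the stated one-sided inclusion is the sharp statement; this is why the lemma is phrased with $\supseteq$ rather than equality.
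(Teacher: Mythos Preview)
Your proof is correct and follows exactly the same approach as the paper: pick $a\in\cl(A)$, $b\in\cl(B)$, choose approximating sequences in $A$ and $B$, and use continuity of addition to conclude $a+b\in\cl(A+B)$. The paper's version is merely more terse, omitting the explicit triangle-inequality estimate and the remark about the reverse inclusion.
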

\begin{proof}
    Let $a\in \cl\pars{A}$ and $b\in \cl\pars{B}$.
    Then there exist sequences $\braces{a_k}\in A$ and $\braces{b_k}\in B$ such that $a_k\to a$ and $b_k \to b$.
    Since $a_k + b_k\in A+B$ we have $a+b = \lim_k a_k+b_k \in \cl\pars{A+B}$.
\end{proof}

\begin{proof}[Proof of \cref{prop:KuM=KspanuM}]
    Since $\mcal{M}$ is symmetric it holds that $\braces{\pm v} - \mcal{M} = \pm\pars{\braces{v} - \mcal{M}}$.
    From \cref{thm:K_properties:union} and \ref{thm:K_properties:product} we can thus conclude
    \begin{equation}
        \mfrak{K}_{\braces{-v,v} - \mcal{M}}
        = \max\braces{\mfrak{K}_{-\pars{\braces{v} - \mcal{M}}}, \mfrak{K}_{\braces{v} - \mcal{M}}}
        = \max\braces{\mfrak{K}_{\braces{v} - \mcal{M}}, \mfrak{K}_{\braces{v} - \mcal{M}}}
        = \mfrak{K}_{\braces{v} - \mcal{M}} .
    \end{equation}
    Moreover, since $\mcal{M}$ is conic, it holds for any set $A$ that $\operatorname{Cone}\pars{A-\mcal{M}} = \operatorname{Cone}\pars{A}-\mcal{M}$.
    \Cref{thm:K_properties:product} implies $\mfrak{K}_{\braces{-v, v} - \mcal{M}} = \mfrak{K}_{\inner{v}\!\setminus\!\braces{0}-\mcal{M}}$ and consequently $\mfrak{K}_{\braces{v} - \mcal{M}} = \mfrak{K}_{\inner{v}\!\setminus\!\braces{0}-\mcal{M}}$.
    Finally, using \cref{lem:closure_of_sum} and \cref{thm:K_properties:closure} yields $\mfrak{K}_{\braces{v}-\mcal{M}} = \mfrak{K}_{\inner{v}-\cl\pars{\mcal{M}}}$.
\end{proof}

\begin{remark}
    Since $\inner{u_{\mcal{M}}}-\cl\pars{\mcal{M}}\supseteq\mcal{M}$, \cref{thm:K_properties:union} and 
    \cref{prop:KuM=KspanuM} show that $\mfrak{K}_{\braces{u_{\mcal{M}}}-\mcal{M}} \ge \mfrak{K}_{\mcal{M}}$.
    This means that the variation function $\mfrak{K}_{\braces{u_{\mcal{M}}}-\mcal{M}}$ is not favorably influenced by the regularity of $u_{\mcal{M}}$.
\end{remark}

\begin{proposition} \label{prop:KM=KspanM}
    For any model class of tensor networks $\mcal{M}$ of fixed order it holds that  $\mfrak{K}_{\mcal{M}} = \mfrak{K}_{\inner{\mcal{M}}}$.
\end{proposition}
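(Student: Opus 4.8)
The plan is to sandwich $\mfrak{K}_{\mcal{M}}$ between the variation function of the set of elementary (rank-one) tensors and that of the full tensor space, and then to show that these two bounds coincide. Write the ambient space as a tensor product $\inner{\mcal{M}} = \mcal{V}_1 \otimes \cdots \otimes \mcal{V}_d$ of the mode spaces $\mcal{V}_j$ of univariate functions on the factors of $Y = Y_1\times\cdots\times Y_d$ (cf.\ \cref{sec:tensor_networks}), and let $R_1 := \braces{\bigotimes_{j=1}^d v_j : v_j \in \mcal{V}_j}$ denote the set of elementary tensors. Since every elementary tensor is representable with all representation ranks equal to one, and a model class of tensor networks of fixed order imposes rank bounds that are at least one on every edge, we have the inclusions $R_1 \subseteq \mcal{M} \subseteq \inner{\mcal{M}}$. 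By the monotonicity of $\mfrak{K}_{\bullet}$ (the consequence of \cref{thm:K_properties:union} noted directly after that theorem) this yields
\begin{equation}
    \mfrak{K}_{R_1} \le \mfrak{K}_{\mcal{M}} \le \mfrak{K}_{\inner{\mcal{M}}} .
\end{equation}

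It then remains only to show that the outer two functions agree. First I would apply \cref{thm:K_properties:tensor_product} iteratively to the factorization $\inner{\mcal{M}} = \mcal{V}_1 \otimes \cdots \otimes \mcal{V}_d$ to obtain $\mfrak{K}_{\inner{\mcal{M}}} = \prod_{j=1}^d \mfrak{K}_{\mcal{V}_j}$. For $R_1$ I would regard each $v_j$ as a function on the full domain $Y$ that depends only on the $j$\textsuperscript{th} coordinate; because $\rho$ is a product measure these lifted coordinate spaces are stochastically independent, so $R_1$ is exactly their elementwise product. \cref{thm:K_properties:product} then gives $\mfrak{K}_{R_1} = \prod_{j=1}^d \mfrak{K}_{\mcal{V}_j}$ as well (using that the variation function of a lifted coordinate space equals that of the corresponding $\mcal{V}_j$, since the $L^2$-norm factorizes over the product measure). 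Combining the two identities shows $\mfrak{K}_{R_1} = \mfrak{K}_{\inner{\mcal{M}}}$, and the displayed sandwich collapses to the claimed equality.

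The main obstacle is conceptual rather than computational: one must justify the two structural facts $R_1 \subseteq \mcal{M}$ and $\inner{\mcal{M}} = \mcal{V}_1 \otimes \cdots \otimes \mcal{V}_d$ directly from the definition of a tensor network in \cref{sec:tensor_networks}. Both reduce to the observation that an elementary tensor is the minimal-rank element of any such format (all internal ranks equal to one) and that elementary tensors span the full tensor product space; hence every model class of tensor networks of fixed order contains the rank-one tensors and has the full tensor space as its linear hull. A secondary point requiring care is the product structure of $\rho$, which underlies both the applicability of \cref{thm:K_properties:tensor_product} to the ambient space and the stochastic independence used for $R_1$; this is part of the standing tensor-network setup and not an additional hypothesis. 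Conceptually, the proof reflects that the pointwise maximizer of the Christoffel-type quantity $\mfrak{K}_{\inner{\mcal{M}}}$ is already an elementary tensor, so restricting from the linear span $\inner{\mcal{M}}$ to the nonlinear set $\mcal{M}$ costs nothing.
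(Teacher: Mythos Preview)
Your proposal is correct and follows essentially the same sandwich argument as the paper: bound $\mfrak{K}_{\mcal{M}}$ between the variation function of the rank-one tensors and that of the full tensor space, and identify the two via \cref{thm:K_properties:product} and \cref{thm:K_properties:tensor_product}. The only cosmetic difference is that the paper phrases the lower inclusion as $\mcal{T}_1 \subseteq \operatorname{cl}(\mcal{M})$ and invokes \cref{thm:K_properties:closure}, whereas you assert $R_1 \subseteq \mcal{M}$ directly; since any tensor network class with rank bounds at least one contains the elementary tensors outright, your version is equally valid and slightly more direct.
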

\begin{proof}
    Let $\mcal{M} \subseteq L^2\pars{Y_1,\rho_1}\otimes\cdots\otimes L^2\pars{Y_M,\rho_M}$ be a set of tensor networks of order $M$ with arbitrary but fixed architecture and rank constraints.
    
    Define the marginal vector spaces $\mcal{V}_m\subseteq L^2\pars{Y_m, \rho_m}$ such that
    \begin{equation}
        \inner{\mcal{M}} = \bigotimes_{m=1}^M \mcal{V}_m .
    \end{equation}
    and the set of \emph{rank--$1$ tensors} (cf.~\cite{hitchcock1927cp_format}) as $\mcal{T}_{1} := \braces{v_1\otimes\cdots\otimes v_M : v_m\in \mcal{V}_m\text{ for all }m} = \mcal{V}_1\cdots\mcal{V}_M$.
    Since every every element in $\mcal{T}_1$ can be approximated arbitrarily well in $\mcal{M}$, \cref{thm:K_properties:closure} and \cref{thm:K_properties:union} imply
    \begin{equation}
        \mfrak{K}_{\mcal{M}} = \mfrak{K}_{\operatorname{cl}\pars{\mcal{M}}}\ge \mfrak{K}_{\mcal{T}_1} \label{eq:KM=KT1}
    \end{equation}
    and by \cref{thm:K_properties:product} and \ref{thm:K_properties:tensor_product}, it holds that 
    \begin{equation}
        \mfrak{K}_{\mcal{T}_1} = \mfrak{K}_{\mcal{V}_1\cdots\mcal{V}_M} = \mfrak{K}_{\mcal{V}_1} \cdots \mfrak{K}_{\mcal{V}_M} = \mfrak{K}_{\mcal{V}_1\otimes\cdots\otimes\mcal{V}_M} = \mfrak{K}_{\inner{\mcal{M}}} . \label{eq:KT1=KV}
    \end{equation}
    Employing \cref{thm:K_properties:union} a final time and combining \eqref{eq:KM=KT1} and \eqref{eq:KT1=KV} yields the chain of inequalities
    $\mfrak{K}_{\inner{\mcal{M}}} \ge \mfrak{K}_{\mcal{M}} \ge \mfrak{K}_{\mcal{T}_1} = \mfrak{K}_{\inner{\mcal{M}}}$,
    which concludes the proof.
\end{proof}

\begin{figure}
    \centering
    \begin{subfigure}[t]{0.5\textwidth}
        \centering
        \includegraphics[width=\textwidth]{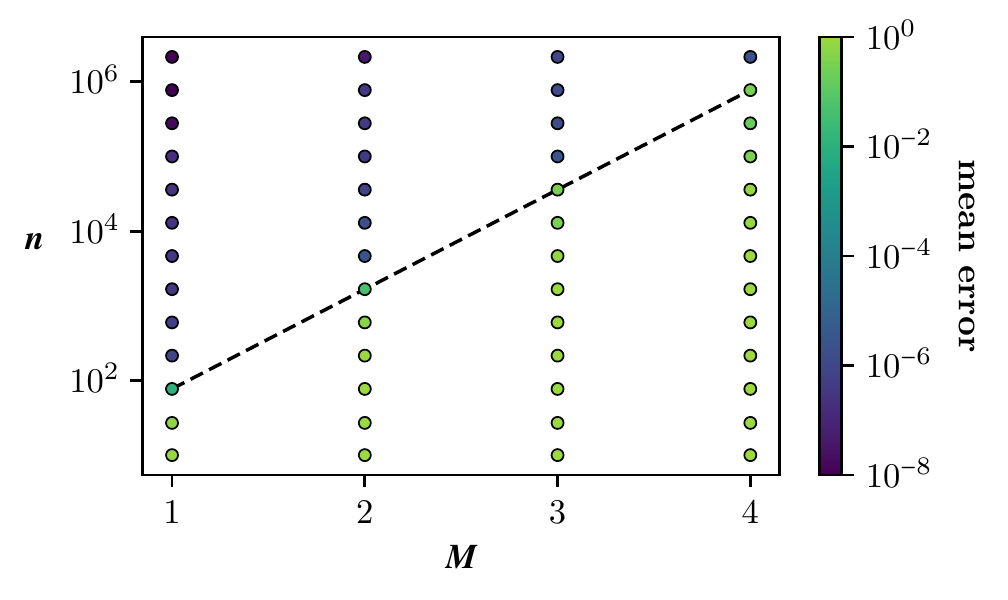}
        \caption{The sought function is defined by $C_{k_1,...,k_M} = 1$.}
        \label{fig:recovery_phase_diagram:worst_case}
    \end{subfigure}%
    \begin{subfigure}[t]{0.5\textwidth}
        \centering
        \includegraphics[width=\textwidth]{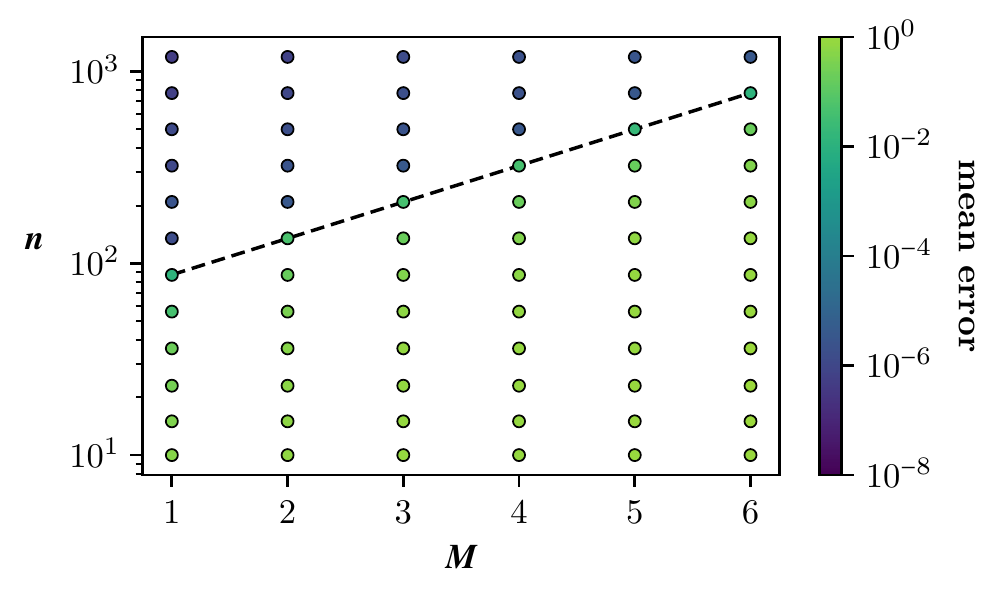}
        \caption{The sought function is $\exp\pars{y_1+\cdots+y_M}$.}
        \label{fig:recovery_phase_diagram:exp}
    \end{subfigure}
    \caption{Two phase diagram for the recovery of multivariate polynomials in the tensor product basis of Legendre polynomials.
    For every order $M$ and number of samples $n$, the mean error is computed as the relative $L^2$-error of the approximation, averaged over $20$ independent realizations.
    A hard-thresholding algorithm (cf.~\cite{eigel2018vmc0}) was used for recovery.
    Note that the optimal coefficient tensor $C\in\pars{\mbb{R}^{15}}^{\otimes M}$ is always of rank $1$.}
    \label{fig:recovery_phase_diagram}
\end{figure}

\begin{remark} \label{rmk:restriction}
    \leavevmode\setlength{\parskip}{0px}
    In light of \cref{thm:K_properties:union}, it stands to reason that the problem arising from equation~\eqref{eq:chain} can be tackled by restricting the model class $\mcal{M}$ to a subclass $\mcal{N}\subseteq\mcal{M}$ that still contains $u_{\mcal{M}}$.
    This is presumably the reason for the practical success of many algorithms for low-rank approximation, which remain in a small neighborhood of the best approximation and the initial guess during their execution.
    
    This gives a heuristic argument as to why the block alternating steepest descent algorithm in~\cite{eigel2018vmc0,ESTW19} and the stabilized ALS algorithm in~\cite{Grasedyck2019SALSA,Kraemer2020thesis} are so successful in practice.
    Both algorithms generates the sequence of iterates $\braces{v_l}_{l\in\mbb{N}}$ by refining the initial guess $v_0$.
    We can thus define the corresponding sequence of approximation errors $r_l := \norm{u_{\mcal{M}} - v_l}$ and the subclasses $\mcal{M}_l := \mcal{M}\cap B\pars{u,r_l}$.
    If $\mfrak{K}_{\braces{u}-\mcal{M}_l}$ is small enough, then arguably $r_{l+1} \le r_l$ and consequently $\mfrak{K}_{\braces{u}-\mcal{M}_{l+1}} \le \mfrak{K}_{\braces{u} - \mcal{M}_l}$.
    In \cref{sec:local_variation_constant} we show that it is important that the rank of $u$ is not overestimated.
    Both algorithms ensure this by starting with a rank of $1$ and successively increasing the rank while testing for divergence on a validation set.
    The majority of the problems in~\cite{ESTW19} possess highly regular solutions and allow for the computation of a descent initial guesses, resulting in a relatively small initial approximation error $r_0$.
    
    Note that the model classes $\mcal{M}_l$ are chosen implicitly by the algorithm and do not enter the implementation.
\end{remark}
This remark is illustrated by the following example.
\begin{example} \label{ex:KU_rank-1} \leavevmode
    Recall the definition of $\mcal{V} = \mbb{R}^{d_1\times d_2}$ and $\norm{\bullet} = \frac{1}{\sqrt{d_1d_2}}\norm{\bullet}_{\mathrm{Fro}}$ from \cref{ex:matrix_completion} and let $\mcal{M}\subseteq\mcal{V}$ be the set of rank-$1$ matrices.
    For any pair $\pars{i,j}\in Y = \bracs{d_1}\times\bracs{d_2}$ define the two matrices $\Delta = e^{\vphantom{\intercal}}_ie_j^\intercal$ and $\bbone = \boldsymbol{1}\boldsymbol{1}^\intercal$.
    Then
    \begin{align}
        \mfrak{K}_{\braces{\bbone}-\mcal{V}}\pars{i,j}
        &= \mfrak{K}_{\mcal{V}}\pars{i,j}
        = d_1d_2 \sup_{v\in\mcal{V}} \frac{\abs{v_{ij}}^2}{\norm{v}_{\mathrm{Fro}}^2}
        = d_1d_2 \frac{1}{\norm{\Delta}_{\mathrm{Fro}}^2}
        = d_1d_2 \\
        \mfrak{K}_{\braces{\bbone}-\mcal{M}}\pars{i,j}
        &\ge \mfrak{K}_{\braces{\bbone}-\inner{\Delta}}\pars{i,j}
        = d_1d_2 \sup_{r\in\mbb{R}} \frac{\abs{1-r}^2}{\norm{\bbone-r\Delta}_{\mathrm{Fro}}^2}
        \ge d_1d_2 \lim_{r\to\infty} \frac{\abs{1-r}^2}{\norm{\bbone-r\Delta}_{\mathrm{Fro}}^2}
        = d_1d_2
    \end{align}
    Thus, $\mfrak{K}_{\braces{\bbone} - \mcal{M}} = \mfrak{K}_{\braces{\bbone} - \mcal{V}}$ as stated in equation~\eqref{eq:chain}.
    Note however, that this only works because $\Delta\in\mcal{M}$ can be scaled such that $\bbone+r\Delta\approx r\Delta$.
    This can be prevented, if $\mcal{M}$ is restricted to the model class $\mcal{N}:=\mcal{M}\cap B\pars{\bbone,R}$ for $R>0$.
\end{example}

\section{Restriction to local model classes} \label{sec:local_variation_constant}

Even with a very good initial guess, the idea from \cref{rmk:restriction} can only work when the neighborhood of the best approximation $u_{\mcal{M}}$ exhibits a sufficiently small variation function.
In this section we derive a lower bound for this variation function for a wide range of model classes and, in doing so, discover three preconditions that any iterative approximation algorithm \textbf{must} satisfy to be successful.
For any $v\in\mcal{M}$ and $r>0$ we consider a local version of the model class $\mcal{M}$, namely
\begin{equation}
    \mcal{M} \cap B\pars{v,r} = \braces{w \in \mcal{M} : \norm{v - w} \le r} .
\end{equation}

In the following we show that, under certain conditions, the \emph{local model class} $\mcal{M}\cap B\pars{v,r}$ can be well approximated by a ball of radius $r$ in a low-dimensional, affine subspace of $\inner{\mcal{M}}$.
We use this fact to estimate the corresponding \emph{local variation function}
\begin{equation}
    \mfrak{K}^{\mathrm{loc}}_{\mcal{M},v} := \lim_{r\to0} \mfrak{K}_{\mcal{M}\cap B\pars{v,r}} .
\end{equation}
Due to the monotonicty of $\mfrak{K}_{\bullet}$
(cf.~\cref{thm:K_properties:union}), this limit provides a lower bound for the variation function in any neighborhood of $v$.
\note{%
For any neighborhood $\mcal{N}$ of $u$ and any sequence $r_j\to0$ there exists $J$ such that $\mcal{N}\supseteq\mcal{M}^{\mathrm{loc}}_{u,r_j}$ for all $j\ge J$.
Thus
\begin{equation}
    K\pars{U\pars{u - \mcal{N}}} \ge K^{\mathrm{loc}}_{u,r_K}\pars{\mcal{M}} \ge K^{\mathrm{loc}}_{u,0}\pars{\mcal{M}} .
\end{equation}%
}%
Moreover, the continuity of $\mfrak{K}_{\bullet}$ (cf.~\crefrange{thm:K_properties:continuity:compact_sets}{thm:K_properties:continuity:cones}) implies that the variation function approaches this limit if the neighborhood is sufficiently small.

This definition allows us to formalize the first precondition for a successful recovery.
\begin{precondition} \label{imp:regular_solution}
    $u_{\mcal{M}}$ has to be sufficiently regular in the sense that $\norm{\mfrak{K}^{\mathrm{loc}}_{\mcal{M},u_{\mcal{M}}}}_{w,\infty}$ must be small.
\end{precondition}

The local variation function can be computed explicitly, if the model class can be linearly approximated in a neighborhood of $v$.
We therefore define the concept of local linearizability.
\begin{definition} \label{def:loc_mfld}
    We call a set $\mcal{M}$ \emph{locally linearizable} in $v\in\mcal{M}$ if, for sufficiently small $r$, the set $\mcal{M}\cap B\pars{v,r}$ is an embedded, differentiable submanifold of a Euclidean space with positive reach.
    \note{Any $C^3$-manifold can be embedded into a finite-dimensional Euclidean space due to Nash's embedding theorem. So this requirement become trivial if the regularity is sufficiently large.}
    The reach $\rch\pars{\mcal{M}}$ of a manifold $\mcal{M}$ is the largest number such that any point at distance less than $\rch\pars{\mcal{M}}$ from $\mcal{M}$ has a unique nearest point on $\mcal{M}$.
\end{definition}
\begin{example}
    Linear spaces are classical examples of $C^\infty$-manifolds with infinite reach.
\end{example}
\begin{example} \label{ex:loc_mfld:sparse}
    The model class of $s$-sparse vectors $\mcal{M} := \braces{w\in\mbb{R}^d : \abs{\operatorname{supp}\pars{w}}\le s}$ is locally linearizable in any $v\in\mcal{M}$ with $\abs{\operatorname{supp}\pars{v}}=s$.
    To see this, let $r < \min \braces{\abs{v_j} \,:\, j\in\supp\pars{v}}$ and observe that
    \begin{equation}
        \mcal{M}\cap B\pars{v,r}
        = \braces{w\in\mbb{R}^d : \supp\pars{w}=\supp\pars{v}} \cap B\pars{v,r}
    \end{equation}
    is a $C^\infty$-manifold with infinite reach.
\end{example}
\begin{example} \label{ex:loc_mfld:low_rank}
    \leavevmode\setlength{\parskip}{0px}
    Consider the set $\mcal{M} := \braces{w\in\mbb{R}^{d_1\times d_2} : \operatorname{rank}\pars{w}\le R}$, of matrices with a rank that is bounded by $R$.
    $\mcal{M}$ is locally linearizable in all $v\in\mcal{M}$ with $\operatorname{rank}\pars{v}=R$.
    To see this, let $\sigma_{j}\pars{v}$ denote the $j$\textsuperscript{th} largest singular value of $v$ and let $r < \sigma_{R}\pars{v}$.
    Then $\mcal{M}\cap B\pars{v,r}$ is a $C^\infty$-submanifold of the manifold of rank-$R$ matrices.
    $\operatorname{rch}\pars{\mcal{M}} \ge \frac{r}{2}$, since for any matrix $w$ with $\norm{v-w}_{\mathrm{Fro}} \le \frac{r}{2}$,
    \begin{equation}
        \sigma_{R}\pars{w}
        \ge \sigma_{R}\pars{v} - \frac{r}{2}
        > 0
        \qquad\text{and}\qquad
        \sigma_{R}\pars{w} - \sigma_{R+1}\pars{w}
        \ge \sigma_{R}\pars{v} - r
        > 0 .
    \end{equation}
    This means, that $\operatorname{rank}\pars{w} \ge R$ and that its best rank-$R$ approximation, given by the truncated singular value decomposition, is uniquely defined.
\end{example}

\begin{proposition}[Lemma 4.3 in \cite{rataj_2002_reach}, Theorem 3.10 in \cite{colesanti_2010_reach}, or Proposition 4 in \cite{cockreham_2017_reach}] \label{prop:reach_increases}
    If $R = \rch\pars{\mcal{M}\cap B\pars{v,r_0}}$ and $r\le \min\braces{r_0, R}$, then $\rch\pars{\mcal{M}\cap B\pars{v,r}} \ge R$.
\end{proposition}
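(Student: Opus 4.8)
My plan is to prove the proposition through Federer's normal-cone characterization of reach. First I would record the elementary reduction: writing $A := \mcal{M}\cap B(v,r_0)$, we are given $\rch(A)=R$, and since $r\le r_0$ we have $B(v,r)\subseteq B(v,r_0)$, so the smaller local model class is exactly $A' := \mcal{M}\cap B(v,r) = A\cap B(v,r)$. Thus the statement reduces to the assertion that intersecting a set of reach $R$ with a closed ball of radius $r\le R$ does not decrease the reach below $R$, and I would establish $\rch(A')\ge R$ by verifying Federer's criterion for $A'$.

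Recall that $\rch(A)\ge R$ holds if and only if for all $a,b\in A$ and every normal $w\in\operatorname{Nor}(A,a)$ one has $\inner{b-a,w}\le \frac{\norm{w}}{2R}\norm{b-a}^2$. Since $A'\subseteq A$, any $a,b\in A'$ also lie in $A$, so the only way the criterion could fail for $A'$ is through normals present for $A'$ but not for $A$, namely the extra normals created where the sphere $S(v,r)$ cuts $\mcal{M}$. Hence the heart of the argument is a normal-cone calculus for the intersection: at a point $a\in A'$,
\[
  \operatorname{Nor}(A',a)\subseteq \operatorname{Nor}(A,a)+\operatorname{Nor}(B(v,r),a),\qquad \operatorname{Nor}(B(v,r),a)=\braces{\nu(a-v):\nu\ge0}.
\]
The ball contributes $\{0\}$ at interior points ($\norm{a-v}<r$) and the outward ray at boundary points ($\norm{a-v}=r$). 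I expect this inclusion to be the main obstacle: it requires the transversality (constraint qualification) that positive reach provides, and it is precisely the technical content supplied by the cited results of Rataj, Colesanti, and Cockreham.

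Granting the inclusion, the remaining step is a short computation in which the convexity of the ball and the hypothesis $r\le R$ do the work. Fix $a,b\in A'$ and $w\in\operatorname{Nor}(A',a)$, and write $w=w_0+\nu(a-v)$ with $w_0\in\operatorname{Nor}(A,a)$ and $\nu\ge0$ (where $\nu=0$ unless $a$ is a boundary point). Federer's criterion for $A$ gives $\inner{b-a,w_0}\le\frac{\norm{w_0}}{2R}\norm{b-a}^2$. For the ball term, the polarization identity $2\inner{b-a,a-v}=\norm{b-v}^2-\norm{a-v}^2-\norm{b-a}^2$ together with $\norm{a-v}=r$ and $\norm{b-v}\le r$ yields $\inner{b-a,a-v}\le-\tfrac12\norm{b-a}^2$, hence $\nu\inner{b-a,a-v}\le-\tfrac{\nu}{2}\norm{b-a}^2$. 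Adding the two bounds gives $\inner{b-a,w}\le\tfrac12\bigl(\tfrac{\norm{w_0}}{R}-\nu\bigr)\norm{b-a}^2$.

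Finally I would close the estimate by comparing $\norm{w_0}$ with $\norm{w}$: since $\norm{a-v}=r$, the triangle inequality gives $\norm{w_0}\le\norm{w}+\nu r$, so
\[
  \tfrac{\norm{w_0}}{R}-\nu \;\le\; \tfrac{\norm{w}}{R}+\nu\Bigl(\tfrac{r}{R}-1\Bigr)\;\le\;\tfrac{\norm{w}}{R},
\]
where the last inequality uses exactly $r\le R$ (so that $\tfrac{r}{R}-1\le0$) together with $\nu\ge0$. Therefore $\inner{b-a,w}\le\frac{\norm{w}}{2R}\norm{b-a}^2$ for all admissible $a,b,w$, which is Federer's criterion for $A'$ and gives $\rch(A')\ge R$. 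The bound $r\le\min\braces{r_0,R}$ is used in exactly two places: $r\le r_0$ to identify $A'=A\cap B(v,r)$, and $r\le R$ in the last display; the assumption $v\in\mcal{M}$ enters only to guarantee $A'\neq\emptyset$.
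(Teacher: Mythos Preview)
The paper does not supply its own proof of this proposition; it is quoted with attribution to three external references and then used as a black box in the proof of Theorem~\ref{thm:convergence_M}. There is therefore no in-paper argument to compare against.

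Your sketch is a faithful reconstruction of how such a result is proved in Federer's framework, and the computation from the normal-cone decomposition onward is correct: the polarization identity, the triangle-inequality bound $\norm{w_0}\le\norm{w}+\nu r$, and the final use of $r\le R$ all check out. The step you explicitly defer---the inclusion $\operatorname{Nor}(A',a)\subseteq\operatorname{Nor}(A,a)+\operatorname{Nor}(B(v,r),a)$---is indeed the entire technical content of the proposition; this sum rule fails for general intersections and requires a constraint qualification, and you are right that supplying it is precisely what the cited references do. One minor sharpening: the hypothesis $v\in\mcal{M}$ (implicit from the local-linearizability setup of Definition~\ref{def:loc_mfld}) does slightly more than guarantee $A'\neq\emptyset$; it places a point of $A$ in the \emph{interior} of the ball $B(v,r)$, which is typically the form the qualification condition takes in intersection theorems for sets of positive reach with convex bodies.
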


A common intuition for a differentiable manifold is the the interpretation as a hypersurfaces that can be locally approximated by a Euclidean space which is called the tangent space.
This intuition is formalized in the following theorem.

\begin{theorem} \label{thm:convergence_M}
    Let $\mcal{M}$ be locally linearizable in $v$ and $R=\rch\pars{\mcal{M}\cap B\pars{v,r_0}} > 0$.
    Then $d_{\mathrm{H}}\pars{\mcal{M}\cap B\pars{v,r}, \pars{v+\mbb{T}_v\mcal{M}}\cap B\pars{v,r}} \le \frac{r^2}{2R}$ for any $r\le \min\braces{r_0, R}$.
    \proofappendix{proof:thm:convergence_M}
\end{theorem}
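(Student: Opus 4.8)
The plan is to split $d_{\mathrm{H}}\pars{\mcal{M}\cap B\pars{v,r}, \pars{v+\mbb{T}_v\mcal{M}}\cap B\pars{v,r}}$ into its two one‑sided parts and bound each by $\frac{r^2}{2R}$. By \cref{def:loc_mfld} the set $\mcal{M}\cap B\pars{v,r_0}$ is an embedded submanifold of positive reach, and by \cref{prop:reach_increases} the restricted set $N:=\mcal{M}\cap B\pars{v,r}$ has $\rch\pars{N}\ge R$ for every $r\le\min\braces{r_0,R}$. I would rely on the two standard estimates for sets of positive reach (see the references cited in \cref{prop:reach_increases}): the chord–tangent bound $d\pars{b-a,\mbb{T}_a\mcal{M}}\le\frac{\norm{b-a}^2}{2R}$ for points $a,b$ of the manifold, and the tangent–set bound $d\pars{a+\tau,\mcal{M}}\le\frac{\norm{\tau}^2}{2R}$ for tangent $\tau$ with $\norm{\tau}\le R$. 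Equivalently, near $v$ the manifold is the graph $\braces{v+\sigma+\phi\pars{\sigma}}$ of a map $\phi$ into the normal space with $\phi\pars{0}=0$, $D\phi\pars{0}=0$, whose height satisfies $\norm{\phi\pars{\sigma}}\le\frac{\norm{\sigma+\phi\pars{\sigma}}^2}{2R}$.

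For the manifold‑to‑tangent direction (the easy half) I would fix $w\in\mcal{M}\cap B\pars{v,r}$ and let $t^\ast:=v+P\pars{w-v}$ be its orthogonal projection onto the affine tangent space $v+\mbb{T}_v\mcal{M}$, where $P$ is the orthogonal projection onto $\mbb{T}_v\mcal{M}$. The chord–tangent estimate with $a=v$, $b=w$ gives $\norm{w-t^\ast}=d\pars{w-v,\mbb{T}_v\mcal{M}}\le\frac{\norm{w-v}^2}{2R}\le\frac{r^2}{2R}$, and since $P$ is $1$‑Lipschitz and fixes $v$, $\norm{t^\ast-v}=\norm{P\pars{w-v}}\le\norm{w-v}\le r$, so $t^\ast\in\pars{v+\mbb{T}_v\mcal{M}}\cap B\pars{v,r}$. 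Thus every manifold point has a tangent‑ball point within $\frac{r^2}{2R}$.

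For the tangent‑to‑manifold direction I would fix $t=v+\tau\in\pars{v+\mbb{T}_v\mcal{M}}\cap B\pars{v,r}$, so $\norm{\tau}\le r\le R$, and take $w:=\xi_N\pars{t}$, the nearest‑point projection of $t$ onto $N$. Because $\rch\pars{N}\ge R$ and $d\pars{t,N}\le\norm{t-v}\le r\le R$, this projection is well defined and, crucially, lies in $\mcal{M}\cap B\pars{v,r}$ by construction, so ball membership is automatic and only the displacement $\norm{t-w}=d\pars{t,N}$ has to be controlled. Whenever the vertical lift $q:=v+\tau+\phi\pars{\tau}$ still lies in $B\pars{v,r}$, it is a competitor in $N$ and the chord–tangent estimate yields $d\pars{t,N}\le\norm{q-t}=\norm{\phi\pars{\tau}}\le\frac{\norm{q-v}^2}{2R}\le\frac{r^2}{2R}$.

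The hard part will be the remaining $t$, which form a thin spherical shell against $\partial B\pars{v,r}$ — precisely those $\tau$ with $\norm{\tau}^2+\norm{\phi\pars{\tau}}^2>r^2$ — where the vertical lift leaves the ball and the nearest point of $N$ sits on the boundary $\mcal{M}\cap S\pars{v,r}$. A naive competitor obtained by rescaling $\tau$ back onto the sphere overshoots $\frac{r^2}{2R}$ by a term of order $\frac{r^3}{R^2}$, so the sharp constant can only be recovered from the genuine nearest‑point geometry; a direct computation on a circle of radius $R$ (the case of equality in the chord–tangent estimate) confirms that the true projection nevertheless stays within $\frac{r^2}{2R}$. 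I therefore expect the bulk of the argument to be a reach‑based comparison on this boundary shell, exploiting that $\mcal{M}$ is locally sandwiched outside every radius‑$R$ ball tangent at its points, to show that the nearest‑point displacement there is still dominated by the quadratic height $\frac{r^2}{2R}$. Taking the maximum over the two directions then gives $d_{\mathrm{H}}\le\frac{r^2}{2R}$.
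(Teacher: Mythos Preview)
Your first half (manifold $\to$ tangent) is the paper's argument verbatim: orthogonal projection onto $v+\mbb{T}_v\mcal{M}$, the chord--tangent bound $\le\frac{\norm{w-v}^2}{2R}$, and Pythagoras for ball membership.

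Your second half has a genuine gap: you correctly isolate the boundary shell $\norm{\tau}^2+\norm{\phi(\tau)}^2>r^2$ as the obstruction in the graph-representation approach, but you do not close it --- you only say you ``expect'' a reach-based comparison to work and note that the circle of radius $R$ saturates the bound. That is not a proof. The paper bypasses the graph map $\phi$ and the shell case-split entirely. For the tangent $\to$ manifold direction it places a single disk $D$ of radius $R$ tangent to $\mcal{M}$ at the base point $v$, with center $c$ on the side toward which $\mcal{M}$ curves; by the reach hypothesis $\mcal{M}$ avoids the interior of $D$. For any $t=v+\tau$ with $\norm{\tau}=s\le r$, the nearest point $x$ of $D$ to $t$ is $x=c+R\,\frac{t-c}{\norm{t-c}}$, at distance exactly
\[
\norm{t-x}=\sqrt{R^2+s^2}-R\ \le\ \frac{s^2}{2R},
\]
the last inequality being elementary (compare $\ell(s)=\sqrt{R^2+s^2}-R$ and $\tilde\ell(s)=\frac{s^2}{2R}$ via $\ell(0)=\tilde\ell(0)$ and $\ell'\le\tilde\ell'$). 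The segment $\overline{tx}$ is then argued to meet $\mcal{M}$ at some $\tilde v$, so $\norm{t-\tilde v}\le\frac{s^2}{2R}$, and the best approximation of $t$ in $\mcal{M}\cap B(v,r)$ does at least as well. This estimate is uniform in $s\in[0,r]$; no shell analysis is needed.

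Your closing intuition --- $\mcal{M}$ sandwiched outside every radius-$R$ tangent ball --- is precisely the fact the paper uses. What you are missing is that a \emph{single} such ball, the one tangent at $v$ itself, already suffices, and that the explicit quantity $\sqrt{R^2+s^2}-R$ replaces the graph height $\norm{\phi(\sigma)}$ as the controlling term.
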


\begin{remark} \label{rmk:dH_ball}
    Note that $d_{\mathrm{H}}\pars{X\cap B\pars{v,r},Y\cap B\pars{v,r}} \le 2r$ because $X\cap B\pars{v,r}\subseteq B\pars{v,r}$ and $Y\cap B\pars{v,r}\subseteq B\pars{v,r}$ for any two sets $X$ and $Y$.
\end{remark}
Looking at \cref{rmk:dH_ball} it is clear that $X\cap B\pars{0,r} \to Y\cap B\pars{0,r}$ does not imply $U\pars{X\cap B\pars{0,r}} \to U\pars{Y\cap B\pars{0,r}}$ for general sets $X$ and $Y$.
For locally linearizable sets, however, this is indeed the case as is shown in the subsequent theorem.

\begin{theorem} \label{thm:convergence_UM}
    Let $\mcal{M}$ be locally linearizable in $v$ and $R=\rch\pars{\mcal{M}\cap B\pars{v,r_0}} > 0$.
    Then $d_{\mathrm{H}}\pars{U\pars{\mcal{M}\cap B\pars{v,r}-v}, U\pars{\mbb{T}_v\mcal{M}}} \le \frac{r}{R}$ for any $r\le \min\braces{r_0, R}$.
    \proofappendix{proof:thm:convergence_UM}
\end{theorem}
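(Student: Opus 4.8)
The plan is to derive this from \cref{thm:convergence_M} by controlling how the normalization map $a\mapsto a/\norm{a}$ distorts the Hausdorff bound obtained there. Translating everything by $-v$ (an isometry, so all Hausdorff distances are preserved), set $A := \mcal{M}\cap B\pars{v,r}-v\subseteq B\pars{0,r}$ and $T := \mbb{T}_v\mcal{M}$; the affine tangent plane becomes the linear space $T$, and \cref{thm:convergence_M} reads $d_{\mathrm{H}}\pars{A,\,T\cap B\pars{0,r}}\le\frac{r^2}{2R}$. The goal becomes $d_{\mathrm{H}}\pars{U\pars{A},U\pars{T}}\le\frac{r}{R}$, where $U\pars{T}$ is simply the unit sphere of $T$. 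I would bound the two directions of this Hausdorff distance separately, repeatedly invoking the elementary normalization estimate $\norm{a/\norm{a}-b/\norm{b}}\le 2\norm{a-b}/\norm{b}$ for nonzero $a,b$, which follows from $a\norm{b}-b\norm{a} = \pars{a-b}\norm{b}+b\pars{\norm{b}-\norm{a}}$ together with the reverse triangle inequality.

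For the direction $\sup_{\tau\in U\pars{T}}\inf_{a\in A\setminus\braces{0}}\norm{\tau - a/\norm{a}}$, I would fix a unit tangent vector $\tau$ and apply \cref{thm:convergence_M} to the point $r\tau\in T\cap B\pars{0,r}$ to obtain some $a\in A$ with $\norm{a-r\tau}\le\frac{r^2}{2R}$. Since $r\le R$, this forces $\norm{a}\ge r/2>0$, and the normalization estimate with $b=r\tau$ gives $\norm{a/\norm{a}-\tau}\le\frac{2}{r}\cdot\frac{r^2}{2R}=\frac{r}{R}$. This direction already saturates the claimed constant.

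The reverse direction $\sup_{a\in A\setminus\braces{0}}\inf_{\tau\in U\pars{T}}\norm{a/\norm{a}-\tau}$ is the delicate one, and I expect it to be the main obstacle: the set $A$ contains points $a$ with $\norm{a}$ arbitrarily close to $0$ (namely $w$ near $v$), and there the normalization map has Lipschitz constant $\sim 1/\norm{a}$, so the uniform bound $\frac{r^2}{2R}$ is by itself useless. The resolution is to exploit the reach hypothesis at the scale of each individual point: applying \cref{thm:convergence_M} at radius $\norm{a}$ instead of $r$ (permissible since $\norm{a}\le r\le\min\braces{r_0,R}$, and reach only improves on smaller balls by \cref{prop:reach_increases}) yields the pointwise \emph{quadratic} estimate $\norm{a-t}\le\frac{\norm{a}^2}{2R}$, where $t$ denotes the orthogonal projection of $a$ onto $T$. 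Because $a-t\perp t$, one has the exact identity $\norm{a/\norm{a}-t/\norm{t}}^2 = 2\pars{1-\norm{t}/\norm{a}} = \frac{2\norm{a-t}^2}{\norm{a}\pars{\norm{a}+\norm{t}}}\le\frac{2\norm{a-t}^2}{\norm{a}^2}$, into which the quadratic estimate substitutes to give $\norm{a/\norm{a}-t/\norm{t}}\le\frac{\norm{a}}{\sqrt2\,R}\le\frac{r}{R}$; the quadratic decay of $\norm{a-t}$ exactly cancels the $1/\norm{a}$ blow-up. Taking the maximum over the two directions then yields $d_{\mathrm{H}}\pars{U\pars{A},U\pars{T}}\le\frac{r}{R}$.
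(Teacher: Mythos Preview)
Your proof is correct and structurally parallel to the paper's: both treat the two Hausdorff directions separately, both hinge on the pointwise quadratic estimate $\norm{a-P_Ta}\le\norm{a}^2/(2R)$ (the paper pulls this from equation~\eqref{eq:v-w_bound} inside the proof of \cref{thm:convergence_M}, you recover it by invoking \cref{thm:convergence_M} at radius $\norm{a}$ and noting that the orthogonal projection is nearest), and both handle the tangent-to-manifold direction by picking a point near $r\tau$ and normalizing. The execution differs in the manifold-to-tangent direction: the paper rescales to radius $r$, works in the plane $\inner{0,v,w}$, and uses a Pythagorean argument on the triangle $\Delta(\tilde v,\tilde w,0)$ to show $\norm{\tilde w-\hat w}\le\norm{\tilde v-\tilde w}$; you instead exploit the orthogonality $a-t\perp t$ to obtain the exact identity $\norm{a/\norm{a}-t/\norm{t}}^2=2\norm{a-t}^2/(\norm{a}(\norm{a}+\norm{t}))$, which is cleaner and even yields a slightly sharper constant $r/(\sqrt2\,R)$ for that direction (not that it matters, since the other direction already saturates $r/R$). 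One small remark: your appeal to \cref{prop:reach_increases} is unnecessary, since \cref{thm:convergence_M} as stated already applies uniformly to all radii $\le\min\{r_0,R\}$ with the fixed $R$.
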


This motivates the following corollary.

\begin{corollary} \label{cor:KU_manifold_limit}
    Assume that $\mcal{M}$ is locally linearizable in $v$.
    Then $\mfrak{K}^{\mathrm{loc}}_{\mcal{M},v} = \mfrak{K}_{\mbb{T}_v\mcal{M}}$.
\end{corollary}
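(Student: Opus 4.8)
The plan is to read off $\mfrak{K}^{\mathrm{loc}}_{\mcal{M},v}$ as a continuous image, under $\mfrak{K}_\bullet$, of the local model class measured about its centre $v$, i.e.\ of the recentred set $\mcal{M}\cap B\pars{v,r}-v$, and then to push this set to the tangent space via \cref{thm:convergence_UM}. The key preliminary observation is that $\mfrak{K}_A$ sees a set only through its directions: since $U\pars{\operatorname{Cone}\pars{A}}=U\pars{A}=U\pars{U\pars{A}}$, the definition of $\mfrak{K}$ gives $\mfrak{K}_A=\mfrak{K}_{U\pars{A}}=\mfrak{K}_{\operatorname{Cone}\pars{A}}$ for every $A$. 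Thus $\mfrak{K}$ is scale invariant, and it is legitimate to replace any set by the cone it generates before taking limits. That the limit in the definition exists at all is a consequence of monotonicity (\cref{thm:K_properties:union}): as $r\downarrow 0$ the sets $\mcal{M}\cap B\pars{v,r}-v$ shrink, so $r\mapsto\mfrak{K}_{\mcal{M}\cap B\pars{v,r}-v}$ is monotone.

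Next I would quantify the convergence of directions. Because $\mcal{M}$ is locally linearizable in $v$, there is an $r_0$ with $R:=\rch\pars{\mcal{M}\cap B\pars{v,r_0}}>0$, and \cref{thm:convergence_UM} yields, for all sufficiently small $r$,
\begin{equation}
    d_{\mathrm{H}}\pars*{U\pars{\mcal{M}\cap B\pars{v,r}-v},\,U\pars{\mbb{T}_v\mcal{M}}}\le \frac{r}{R}.
\end{equation}
I then translate this into a statement about cones. Since $S\pars{0,1}\cap\operatorname{Cone}\pars{A}=U\pars{A}$ and $\operatorname{Cone}\pars{\mbb{T}_v\mcal{M}}=\mbb{T}_v\mcal{M}$ (the tangent space is a linear subspace, hence its own cone), the left-hand side above is precisely the truncated Hausdorff distance $d_{\mathrm{tH}}\pars{\operatorname{Cone}\pars{\mcal{M}\cap B\pars{v,r}-v},\,\mbb{T}_v\mcal{M}}$, which tends to $0$ as $r\to0$.

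Finally I would invoke continuity. By \cref{thm:K_properties:continuity:cones}, $\mfrak{K}_\bullet$ is continuous on cones with respect to $d_{\mathrm{tH}}$, so combining the previous step with the scale invariance $\mfrak{K}_A=\mfrak{K}_{\operatorname{Cone}\pars{A}}$ gives
\begin{equation}
    \mfrak{K}_{\mcal{M}\cap B\pars{v,r}-v}=\mfrak{K}_{\operatorname{Cone}\pars{\mcal{M}\cap B\pars{v,r}-v}}\longrightarrow\mfrak{K}_{\mbb{T}_v\mcal{M}}\qquad\pars{r\to0},
\end{equation}
which, together with the monotone limit from the first step, is exactly $\mfrak{K}^{\mathrm{loc}}_{\mcal{M},v}=\mfrak{K}_{\mbb{T}_v\mcal{M}}$.

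The main obstacle is matching topologies. \cref{thm:convergence_UM} is phrased as Hausdorff convergence of the normalised sets $U\pars{\cdot}$, whereas the non-degenerate continuity property of $\mfrak{K}_\bullet$ is the one for the truncated Hausdorff pseudometric; it is essential to use the cone formulation (\cref{thm:K_properties:continuity:cones}) rather than the plain Hausdorff statement (\cref{thm:K_properties:continuity:compact_sets}), since the recentred sets accumulate at $0$ and the ordinary Hausdorff distance between cones is degenerate. The identity $S\pars{0,1}\cap\operatorname{Cone}\pars{A}=U\pars{A}$ is precisely the bridge between the two. One should also note that local linearizability places $\mbb{T}_v\mcal{M}$ inside a finite-dimensional subspace of $\mcal{V}_{w,\infty}$, so that $\mfrak{K}_{\mbb{T}_v\mcal{M}}$ is well defined and the continuity statement applies.
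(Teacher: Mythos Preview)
Your proof is correct and follows essentially the same approach as the paper. Both arguments pass from the Hausdorff convergence of the normalised sets $U\pars{\mcal{M}\cap B\pars{v,r}-v}\to U\pars{\mbb{T}_v\mcal{M}}$ (Theorem~\ref{thm:convergence_UM}) to convergence of the variation function via continuity; the paper does this by splitting $\mfrak{K}_\bullet=F\circ U$ and invoking the continuity of $F$ directly (Lemmas~\ref{lem:abs_continuous}--\ref{lem:sqr_continuous}), whereas you repackage the same step through the cone formulation and \cref{thm:K_properties:continuity:cones}, which is arguably cleaner since it uses one of the stated calculus rules verbatim.
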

\begin{proof}
    Recall from the proof of \crefrange{thm:K_properties:continuity:compact_sets}{thm:K_properties:continuity:cones} that $\mfrak{K}_{\bullet} = F\circ\,U$ where $F = \operatorname{sqr}\circ \sup\circ \operatorname{abs}$ and $\operatorname{sqr}$, $\sup$, and $\operatorname{abs}$ are defined in \crefrange{eq:sqr}{eq:sup}\vphantom{\eqref{eq:sqr}\eqref{eq:abs}\eqref{eq:sup}}.\note{Maybe use $\operatorname{SUP}$ instead of $\sup$.}
    By \crefrange{lem:abs_continuous}{lem:sqr_continuous} $F : \mfrak{P}\pars{\mcal{V}_{w,\infty}} \to \mcal{V}_{w^2,\infty}$ is continuous with respect to the Hausdorff pseudometric.
    The continuity of $F$ and \cref{thm:convergence_UM} then imply the first assertion,
    \begin{equation}
        \lim_{r\to 0} F\pars{U\pars{\mcal{M}\cap B\pars{v,r}-v}}
        = F\pars{\lim_{r\to 0} U\pars{\mcal{M}\cap B\pars{v,r}-v}}
        = F\pars{U\pars{\mbb{T}_v\mcal{M}}} .
    \end{equation}
    The continuity of $\norm{\bullet}_{w,\infty}$ implies the second assertion.
\end{proof}

We conclude this section with two examples in which we use the preceding theorems to derive bounds for the local variation function $\mfrak{K}^{\mathrm{loc}}_{\mcal{M}, v}$ of low-rank matrices.
The following proposition will be a useful tool for this.
\begin{proposition}
    Let $\mcal{M}$ be conic and locally linearizable in $v$. Then $v\in\mbb{T}_v\mcal{M}$.
\end{proposition}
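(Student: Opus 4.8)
The plan is to use the conic structure of $\mcal{M}$ to exhibit an explicit radial curve lying inside the local model class $\mcal{M}\cap B\pars{v,r}$ whose velocity at $v$ equals $v$ itself; the kinematic characterisation of the tangent space of an embedded submanifold then forces $v\in\mbb{T}_v\mcal{M}$. I would first dispose of the degenerate case: if $v=0$, the claim is immediate, since $\mbb{T}_v\mcal{M}$ is a linear subspace and therefore contains $0$. Hence I assume $v\ne 0$ from now on.

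Because $\mcal{M}$ is locally linearizable in $v$, I fix $r>0$ small enough that $\mcal{M}\cap B\pars{v,r}$ is an embedded, differentiable submanifold of the ambient Euclidean space (possible by \cref{def:loc_mfld}). Since $\mcal{M}$ is conic, $\lambda v\in\mcal{M}$ for every $\lambda>0$, so I can define the curve $\gamma:\pars{-\epsilon,\epsilon}\to\mcal{V}$ by $\gamma(t):=\pars{1+t}v$ with $\epsilon:=\min\braces{\tfrac12,\,r/\norm{v}}$. For $\abs{t}<\epsilon$ one has $1+t>0$, hence $\gamma(t)\in\mcal{M}$, and $\norm{\gamma(t)-v}=\abs{t}\,\norm{v}<r$, hence $\gamma(t)\in B\pars{v,r}$. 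Thus $\gamma$ maps into $\mcal{M}\cap B\pars{v,r}$ and satisfies $\gamma(0)=v$.

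It remains to read off the velocity. The map $\gamma$ is continuously differentiable into the ambient space with image contained in the embedded submanifold $\mcal{M}\cap B\pars{v,r}$; because the submanifold is embedded, $\gamma$ is also continuously differentiable as a curve into the submanifold, so its velocity at $0$ is a tangent vector, $\gamma'(0)\in\mbb{T}_v\mcal{M}$. A direct computation gives $\gamma'(0)=\frac{\dd}{\dd t}\big|_{t=0}\pars{1+t}v=v$, and therefore $v\in\mbb{T}_v\mcal{M}$, as claimed.

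The single delicate point is the passage from ``differentiable into the ambient space'' to ``differentiable into the submanifold'', which is precisely where the embeddedness hypothesis built into \cref{def:loc_mfld} is used; for a merely immersed submanifold this implication can fail. Everything else reduces to the elementary observation that coniness makes the radial ray through $v$ a smooth curve in $\mcal{M}$ with velocity $v$ at the base point.
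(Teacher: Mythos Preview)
Your proof is correct and follows exactly the same idea as the paper: exhibit the radial curve $\gamma(t)=(1+t)v$ through $v$ inside the local model class and read off $\gamma'(0)=v\in\mbb{T}_v\mcal{M}$. Your version is in fact more careful than the paper's, since you explicitly choose the domain so that $\gamma$ stays in $B(v,r)$ and you note why embeddedness is needed.
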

\begin{proof}
    Fix $r>0$ such that $\mcal{M}\cap B\pars{v,r}$ is an embedded, differentiable submanifold and consider the path $\gamma : \pars{-r,r} \to \mcal{M}\cap B\pars{v,r}$, defined by $\gamma\pars{x} := \pars{1+x}v$.
    Since $\gamma\pars{0} = v$ and $\gamma'\pars{0} = v$, it represents the tangent vector $v\in\mbb{T}_v\mcal{M}$.
\end{proof}

\begin{example} \label{ex:KU_loc_rank-1}
    \leavevmode\setlength{\parskip}{0px}
    As in \cref{ex:matrix_completion}, let $\mcal{V} = \mbb{R}^{d_1\times d_2}$ and $\norm{\bullet} = \frac{1}{\sqrt{d_1d_2}}\norm{\bullet}_{\mathrm{Fro}}$, and let $\mcal{M}\subseteq\mcal{V}$ be the set of rank-$1$ matrices.
    We now compute the local variation function $\mfrak{K}^{\mathrm{loc}}_{\mcal{M}, v}$ for $v = w_{\mathrm{L}}\otimes w_{\mathrm{R}}\in\mcal{M}$ with $w_{\mathrm{L}}\in\mbb{R}^{d_1}$ and $w_{\mathrm{R}}\in\mbb{R}^{d_2}$.

    Since $\mbb{T}_{w_{\mathrm{L}}\otimes w_{\mathrm{R}}}\mcal{M} = \mcal{W}_{\mathrm{L}} \oplus \mcal{W}_{\mathrm{R}}$ with $\mcal{W}_{\mathrm{L}} := \inner{w_{\mathrm{L}}}\otimes\mbb{R}^{d_2}$ and $\mcal{W}_{\mathrm{R}} := \inner{w_{\mathrm{L}}}^{\perp} \otimes \inner{w_{\mathrm{R}}}$,  \cref{cor:KU_manifold_limit} yields $\mfrak{K}^{\mathrm{loc}}_{\mcal{M}, v} = \mfrak{K}_{\mbb{T}_{v}\mcal{M}} = \mfrak{K}_{\mcal{W}_{\mathrm{L}} \oplus \mcal{W}_{\mathrm{R}}}$.
    Using \cref{thm:K_properties} we can bound this by
    \begin{align}
        \mfrak{K}_{\mcal{W}_{\mathrm{L}}\oplus\mcal{W}_{\mathrm{R}}}
        &= \mfrak{K}_{\mcal{W}_{\mathrm{L}}} + \mfrak{K}_{\mcal{W}_{\mathrm{R}}} \\
        \mfrak{K}_{\mcal{W}_{\mathrm{L}}} &= d_2\mfrak{K}_{\braces{w_{\mathrm{L}}}} \\
        \mfrak{K}_{\mcal{W}_{\mathrm{R}}} &\le d_1\mfrak{K}_{\braces{w_{\mathrm{R}}}} .
    \end{align}
    Moreover, since $\mcal{M}$ is conic, $v\in\mbb{T}_v\mcal{M}$.
    Hence, $\mfrak{K}_{\braces{v}} \le \mfrak{K}^{\mathrm{loc}}_{\mcal{M},v} \le d_2\mfrak{K}_{\braces{w_{\mathrm{L}}}} + d_1\mfrak{K}_{\braces{w_{\mathrm{R}}}}$.
     
    Finally, we apply this bound to the rank-$1$ matrices $\bbone = \boldsymbol{1}\boldsymbol{1}^\intercal$ and $\Delta = e_1e_1^\intercal$ from \cref{ex:KU_rank-1} and arrive at the estimates
    \begin{equation}
        1 \le 
        \mfrak{K}^{\mathrm{loc}}_{\mcal{M},\bbone}
        \le d_1 + d_2
        \qquad\text{and}\qquad
        d_1d_2 \le
        \mfrak{K}^{\mathrm{loc}}_{\mcal{M},\Delta}
        \le d_1d_2 .
    \end{equation}
    
    Concrete values of $\mfrak{K}_{\braces{\bbone}-\mcal{M}\cap B\pars{\bbone, r}}$ for different dimensions $d=d_1=d_2$ and different values of $r$ are estimated numerically in \cref{fig:local_variation_constant}.
    It can be seen, that indeed $\mfrak{K}_{\braces{\bbone}-\mcal{M}\cap B\pars{\bbone, r}}\in\mcal{O}\pars{d^2}$ for $r\to\infty$ and $\mfrak{K}_{\braces{\bbone}-\mcal{M}\cap B\pars{\bbone, r}}\in\mcal{O}\pars{d}$ for $r\to 0$.
    The algorithm used to generate this plot is derived in~\cref{sec:algorithm_KUbbone}.
\end{example}

\begin{figure}[ht]
    \centering
    \includegraphics[width=\textwidth]{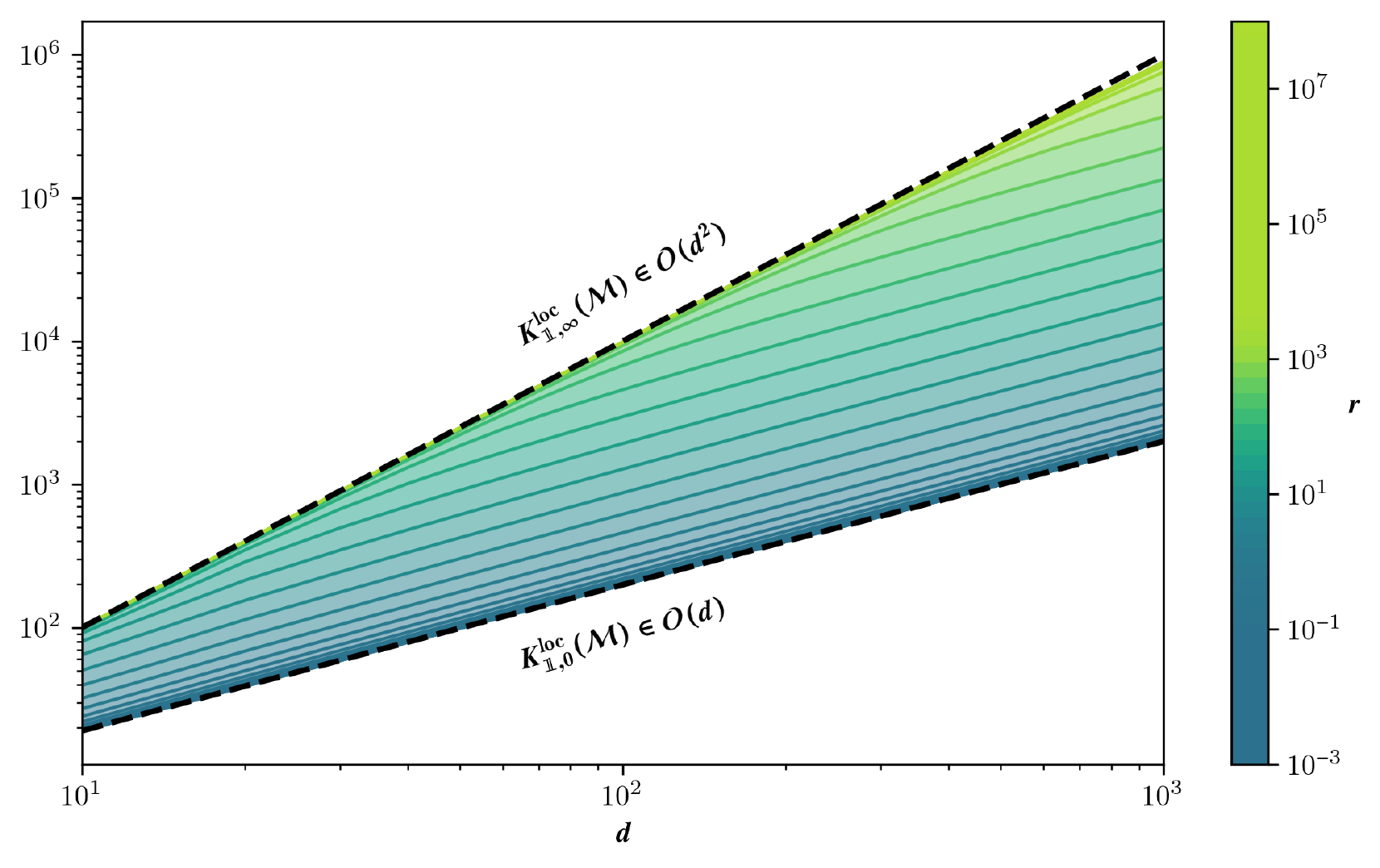}
    \caption{The local variation constant $K^{\mathrm{loc}}_{\bbone,r}\pars{\mcal{M}}$ of the set of rank-$1$ matrices $\mcal{M}\subseteq\mbb{R}^{d\times d}$ for different $d$ and $r$.}
    \label{fig:local_variation_constant}
\end{figure}

From the previous example we conclude the second prerequisite.
\begin{precondition} \label{imp:initial_guess}
    An initial guess $u_{\mathrm{init}}\in\mcal{M}\cap B\pars{u_{\mcal{M}},r}$ is required, for which $r$ is sufficiently small.
\end{precondition}

\begin{remark} \label{rmk:KU_loc_rank-r}
    \leavevmode\setlength{\parskip}{0px}
    The arguments from \cref{ex:KU_loc_rank-1} can also be used to derive bounds for the variation function of the set $\mcal{M}$, of matrices with rank bounded by $R$, from \cref{ex:loc_mfld:low_rank}.
    If $v\in\mcal{M}$ satisfies $\operatorname{rank}\pars{v} = R$, then 
    \begin{equation}
        \mfrak{K}_{\braces{v}}
        \le \mfrak{K}^{\mathrm{loc}}_{\mcal{M},v}
        \le d_2\mfrak{K}_{\inner{w_{\mathrm{L},1}^{\vphantom{\intercal}}, \ldots, w_{\mathrm{L},R}^{\vphantom{\intercal}}}} + d_1\mfrak{K}_{\inner{w_{\mathrm{R},1}^{\vphantom{\intercal}}, \ldots, w_{\mathrm{R},R}^{\vphantom{\intercal}}}},
    \end{equation}
    where $v = \sigma_1 w_{\mathrm{L},1}^{\vphantom{\intercal}} w_{\mathrm{R},1}^\intercal + \cdots + \sigma_R w_{\mathrm{L},R}^{\vphantom{\intercal}} w_{\mathrm{R},R}^\intercal$
    is the singular value decomposition of $v$.

    Since $\mfrak{K}_{\inner{w_{\mathrm{L},1}^{\vphantom{\intercal}}, \ldots, w_{\mathrm{L},R}^{\vphantom{\intercal}}}}$ and $\mfrak{K}_{\inner{w_{\mathrm{R},1}^{\vphantom{\intercal}}, \ldots, w_{\mathrm{R},R}^{\vphantom{\intercal}}}}$ measure how ``spread out'' the singular vectors of $v$ are, the local variation constant can be interpreted as an analogue of the incoherence of the matrix $v$, as known from classical matrix completion.
    
    Indeed, in~\cite[Section~1.5]{Candes2010convexRelaxationMatrixCompletion}, the authors discuss a class of rank-$R$ matrices in $\mbb{R}^{d\times d}$, where the incoherence condition is satisfied with high probability.
    The matrices in this class satisfy 
    \begin{equation}
        \max\braces{\norm{w_{\mathrm{L},k}}_\infty, \norm{w_{\mathrm{R},k}}_\infty} \le \sqrt{\mu/d}
    \end{equation}
    for all $k=1,\ldots,R$, which implies that $\mfrak{K}^{\mathrm{loc}}_{\mcal{M},v} \le 2Rd\mu$.
    This means that the local variation function of every matrix in this class is bounded.

    The bounds from \cref{ex:KU_loc_rank-1} can also be extended to hierarchical tensor formats but the relation to the corresponding incohrence conditions is not as straight-forward.
    This may be due to the fact that there is no canonical definition of a tensor rank.
    In~\cite{yuan2014tensor}, for example, the rank
    \begin{equation}
        \overline{r}\pars{x} := \sqrt{\pars{r_1r_2d_3 + r_1r_3d_2 + r_2r_3d_1}/\pars{d_1+d_2+d_3}},
    \end{equation}
    is used, which does not correspond to any class of tensor networks.
    
    It would be quite interesting to see if the discussed relation of the incoherence condition to the variation function can be strengthened and even extended to the tensor case.
\end{remark}

We conclude this section with an example that highlights the limitations of the result.

\begin{example} \label{ex:KU_loc_rank-2}
    Recall the definition of $\mcal{V} = \mbb{R}^{d_1\times d_2}$ and $\norm{\bullet} = \frac{1}{\sqrt{d_1d_2}}\norm{\bullet}_{\mathrm{Fro}}$ from \cref{ex:matrix_completion},
    let $\mcal{M}\subseteq\mcal{V}$ be the set of \textbf{rank-$\boldsymbol{2}$} matrices and let $v$ be any rank-$1$ matrix.
    To compute the local variation function, let $\pars{i,j}\in Y = \bracs{d_1}\times\bracs{d_2}$ and define the matrix $\Delta = e_ie_j^\intercal$ as in \cref{ex:KU_rank-1}.
    Observe that $v - r\Delta \in \mcal{M}\cap B\pars{v,r}$ and therefore
    \begin{equation}
        \mfrak{K}_{\braces{v} - \mcal{M}\cap B\pars{v,r}}\pars{i,j}
        \ge \mfrak{K}_{\braces{v-\pars{v-r\Delta}}}\pars{i,j}
        = \mfrak{K}_{\braces{\Delta}}\pars{i,j}
        = d_1d_2
        = \mfrak{K}_{\mcal{V}}\pars{i,j}
    \end{equation}
    for any $r>0$ .
    This implies, that $\mfrak{K}^{\mathrm{loc}}_{\mcal{M},v} = d_1d_2 = \mfrak{K}_{\mcal{V}}$ and shows, that overestimating the rank blows up the variation function.
\end{example}

This example provides us with the final prerequisit.
\begin{precondition} \label{imp:overestimation_blowup}
    $\mcal{M}$ must be locally linearizable in $u_{\mcal{M}}$.
    If $\mcal{M}$ is a model class of tensor networks, then the corresponding rank of $u$ must not be overestimated.
\end{precondition} 

\section{A modified ALS} \label{sec:restricted_als}

Although \cref{rmk:restriction} provides a heuristic argument for why state-of-the-art algorithms work so well, {\bf guaranteeing} the Preconditions \ref{imp:regular_solution}, \ref{imp:initial_guess} and \ref{imp:overestimation_blowup}, that are necessary for a good recovery, is unrealistic in most practical applications.
Even if the best approximation $u_{\mcal{M}}$ is known to have high regularity (in the sense of \cref{imp:regular_solution}), finding an appropriate initial guess can be a challenging task and a tight bound for the rank of $u_{\mcal{M}}$ is rarely known.
This means that we can not rely on the algorithm to stay in a sufficiently regular subclass of $\mcal{M}$.
To guarantee a successful recovery we propose to design specialized algorithms that explicitly enforce a small variation function.
This idea can be found in many well-known algorithms.

\begin{example} \label{ex:dp:regression}
    \leavevmode\setlength{\parskip}{0px}
    Consider the setting of polynomial regression from \cref{ex:polynomial_regression}.
    Denote by $L_m$ the $m$\textsuperscript{th} normalized Legendre polynomial and define the linear model space $\mcal{M} := \inner{L_m : \pars{m+1}^2 \le r}$.
    In equation~\eqref{KU_properties:dim} it is shown that $\norm{\mfrak{K}_{\braces{u_{\mcal{M}}} - \mcal{M}}}_\infty = \norm{\mfrak{K}_{\mcal{M}}}_\infty \le r$.
    This allows us to bound the variation function by restricting the maximal degree of the polynomials.
\end{example}

\begin{example} \label{ex:dp:sparse_regression}
    \leavevmode\setlength{\parskip}{0px}
    Consider the problem of sparse regression from \cref{ex:sparse_regression}.
    The method of $\omega$-weighted $\ell^1$-minimization~\cite{Rauhut2016weighted_l1,bouchot_2015_CSPG} works by solving the optimization problem
    \begin{equation}
        \text{minimize } \norm{v}_{\omega,1} \quad\text{subject to}\quad \norm{u-v}_n = 0 ,
    \end{equation}
    which is a convexified version of the problem
    \begin{equation}
        \text{minimize } \norm{v}_{\omega,0} \quad\text{subject to}\quad \norm{u-v}_n = 0 .
    \end{equation}
    
    Now observe that, by triangle and Cauchy--Schwarz inequality, %
    \begin{equation}
         \norm{v}_\infty
         \le \sum_{i=1}^d \abs{v_i}\norm{B_i}_\infty
         \le \norm{v} \norm{v}_{0,\omega},
    \end{equation}
    which implies $\mfrak{K}_{\braces{v}} \le \norm{v}_{\omega,0}^2$.
    This means that $\omega$-weighted $\ell^1$-minimization restricts the solutions to a model class in which the variation function is small.
\end{example}

This restriction to a model class with small variation function is, however, not the case for nuclear norm minimization, as is demonstrated in the subsequent example.

\begin{counterexample} \label{ex:dp:nuclear_norm_minimization}
    \leavevmode\setlength{\parskip}{0px}
    Consider the problem of matrix recovery from \cref{ex:matrix_completion} and consider the model class of
    rank-$R$ matrices.
    Nuclear norm minimization~\cite{Candes2010convexRelaxationMatrixCompletion,recht_2010_nuclear_norm_minimization,yuan2014tensor} works by solving the optimization problem
    \begin{equation}
        \text{minimize } \norm{v}_{*} \quad\text{subject to}\quad \norm{u-v}_n = 0 ,
    \end{equation}
    which is a convexified version of the problem
    \begin{equation}
        \text{minimize } \operatorname{rank}\pars{v} \quad\text{subject to}\quad \norm{u-v}_n = 0 .
    \end{equation}
    We have seen in \cref{sec:convergence_bounds}, that the rank of a model-class of matrices does not have any influence on the variation function and it is easy to conceive matrices $v\in\mcal{M}$ with small nuclear norm but with large variation function.
    This means that nuclear norm minimization does not minimize a bound for the variation function.

    Indeed, the application of the triangle and Cauchy--Schwarz inequality, as done in \cref{ex:dp:sparse_regression}, yields for any matrix $v\in\mcal{M}$ that
    \begin{align}
        \norm{v}_{\infty}
        \le \sum_{j=1}^R \sigma_j \norm{w_{\mathrm{L},j}}_\infty \norm{w_{\mathrm{R},j}}_\infty
        \le \sqrt{d_1d_2} \norm{v} \pars*{\sum_{j=1}^{R} \norm{w_{\mathrm{L},j}}_\infty^2 \norm{w_{\mathrm{R},j}}_\infty^2}^{1/2},
    \end{align}
    where $v = \sigma_1 w_{\mathrm{L},1} w_{\mathrm{R},1}^\intercal + \cdots + \sigma_R w_{\mathrm{L},R} w_{\mathrm{R},R}^\intercal$ is the singular value decomposition of $v$.
    This expression provides an explicit bound for the variation function which, however, is not commonly minimized.
\end{counterexample}

\note{Im Microstep können wir die RIP testen indem wir die Gram-Matrix für die lokale Basis SVD'n.}

The remainder of this section showcases the idea of explicitly restricting the variation function in the optimization algorithm.
This is done by modifying the \emph{alternating least squares} (ALS) algorithm for the empirical best-approximation in the model class of low-rank tensor networks.

\subsection{The standard ALS algorithm}

This section provides a brief overview of the \emph{alternating least squares} (ALS) algorithm introduced in~\cite{oseledets_2011_tensor_trains,holtz_alternating_2012}.

Let the space $\mcal{V}_{d}\subseteq L^2\pars{Y,\rho}$ be spanned by the $d\in\mbb{N}$ orthonormal basis functions $\braces{b_k}_{k=1,\ldots,d}$ and recall from \cref{sec:tensor_networks} that every function $v\in\mcal{V}_{d}^{\otimes M} \subseteq L^2\pars{Y^M, \rho^{\otimes M}}$ can be represented graphically as
\begin{equation} \label{eq:VTT}
\begin{tikzpicture}[baseline=(current bounding box.center)]
    \node[core, label={[anchor=east, label distance=3]left:$v\pars{y_1,\ldots,y_M}$}] (V) {};
    \node[right=0.4 of V] (eqVC) {$=$};
    \node[right=4.6 of eqVC, core, label={[anchor=east, label distance=3]left:$\pars{\boldsymbol{V}, \boldsymbol{b}\pars{y_1}\otimes\cdots\otimes\boldsymbol{b}\pars{y_M}}_{\mathrm{Fro}}$}] (VC) {};
    \node[right=0.4 of VC] (eqV) {$=$};
    \node[core, right=0.4 of eqV,, label={[anchor=south, label distance=3]above:$\boldsymbol{V}_1$}] (V1) {};
    \node[core, below=0.5 of V1, label={[anchor=north, label distance=-2]below:$\boldsymbol{b}\pars{y_1}$}] (d1) {};
    \node[core, right=1.25 of V1, label={[anchor=south, label distance=3]above:$\boldsymbol{V}_2$}] (V2) {};
    \node[core, below=0.5 of V2, label={[anchor=north, label distance=-2]below:$\boldsymbol{b}\pars{y_2}$}] (d2) {};
    \node[core, right=1.75 of V2, label={[anchor=south, label distance=3]above:$\boldsymbol{V}_{M-1}$}] (V3) {};
    \node[core, below=0.5 of V3, label={[anchor=north, label distance=-2]below:$\boldsymbol{b}\pars{y_{M-1}}$}] (d3) {};
    \node[core, right=1.25 of V3, label={[anchor=south, label distance=3]above:$\boldsymbol{V}_M$}] (V4) {};
    \node[core, below=0.5 of V4, label={[anchor=north, label distance=-2]below:$\boldsymbol{b}\pars{y_M}$}] (d4) {};

    \draw[contraction] (V1) edge (d1) -- (V2) edge (d2)
                        (V2) -- ($(V2)+(0.3,0)$)
                        ($(V3)-(0.3,0)$) -- (V3)
                        (V3) edge (d3) -- (V4) edge (d4);
    \draw[contractionDots] ($(V2)+(0.3,0)$) -- ($(V3)-(0.3,0)$);

    \Core{(V)}
    \Core{(VC)}
    \Core{(V1)}
    \Core{(V2)}
    \Core{(V3)}
    \Core{(V4)}
\end{tikzpicture}
\end{equation}
where the $\boldsymbol{V}_k$'s are the components of the tensor train representation of the coefficient tensor $\boldsymbol{V}\in\pars{\mbb{R}^d}^M$ and where $\boldsymbol{b}\pars{x} = \bracs{b_1\pars{x}, \ldots, b_d\pars{x}}^\intercal$ denotes the vector of basis function, evaluated at $x$.

Now consider the model class 
\begin{equation}
    \mcal{M} := \{ v\in \mcal{V}_{d}^{\otimes M} \,:\, \text{TT-rank}(\boldsymbol{V})\le r\}
\end{equation}
of functions in $\mcal{V}_{d}^{\otimes M}$ with a coefficient tensor $\boldsymbol{V}$ that can be represented in the tensor train format with a rank of at most $r$.
The minimization problem \eqref{eq:min_emp} can then be reformulated as
\begin{equation}
    \min_{v\in\mcal{M}} \frac{1}{n} \sum_{i=1}^n w\pars{y^i} \abs{u\pars{y^i} - \pars{\boldsymbol{V}, \boldsymbol{b}\pars{y^i_1}\otimes\cdots\otimes\boldsymbol{b}\pars{y^i_M}}_{\mathrm{Fro}}}^2
\end{equation}
Defining $\boldsymbol{u}\in\mbb{R}^n$ by $\boldsymbol{u}_i = \sqrt{w\pars{y^i}} u\pars{y^i}$ and $\boldsymbol{M} \in\mcal{L}\pars{\mbb{R}^{d_1\times\cdots\times d_M}, \mbb{R}^n}$ by $\pars{\boldsymbol{MV}}_i = \sqrt{w\pars{y^i}}\pars{\boldsymbol{V}, \boldsymbol{b}\pars{y^i_1}\otimes\cdots\otimes\boldsymbol{b}\pars{y^i_M}}_{\mathrm{Fro}}$, this can be written as
\begin{equation} \label{eq:min_emp_coef}
    \underset{v\in\mcal{M}}{\text{minimize}}\ \norm{\boldsymbol{u} - \boldsymbol{M}\boldsymbol{V}}_{2}^2,
\end{equation}
where the tensor train representation of $\boldsymbol{V}$ allows for an efficient evaluation of the operator $\boldsymbol{M}$.

The ALS method solves \eqref{eq:min_emp_coef} by refining an initial guess in a sequence of \emph{microsteps} which optimize a single component tensor $\boldsymbol{V}_m$ at a time.
To formalize this, we define for every $m=1,\ldots,M$ the operator $\xhat{\boldsymbol{V}}_m : \mcal{L}\pars{\mbb{R}^{r_{m-1}\times d_m\times r_m}, \mbb{R}^{d_1\times\cdots\times d_M}}$ by 
\begin{equation}
\begin{tikzpicture}[baseline=(current bounding box.center)]
    \node (U) {$\xhat{\boldsymbol{V}}_m$};
    \node[right=0.2 of U] (eqU) {$=$};
    \node[core, right=0.4 of eqU] (U1) {};
    \node[core, below=0.5 of U1, label={[anchor=north, label distance=-2]below:$\scriptstyle d_1$}] (dU1) {};
    \node[core, right=0.75 of U1] (UCL) {};
    \node[core, below=0.5 of UCL, label={[anchor=north, label distance=-2]below:$\scriptstyle d_{m-1}$}] (dUCL) {};

    \node[core, right=0.75 of UCL] (UC) {};
    \node[core, below=0.5 of UC, label={[anchor=north, label distance=-2]below:$\scriptstyle d_m$}] (dUC) {};

    \node[core, right=0.75 of UC] (UCR) {};
    \node[core, below=0.5 of UCR, label={[anchor=north, label distance=-2]below:$\scriptstyle d_{m+1}$}] (dUCR) {};
    \node[core, right=0.75 of UCR] (UM) {};
    \node[core, below=0.5 of UM, label={[anchor=north, label distance=-2]below:$\scriptstyle d_M$}] (dUM) {};

    \draw[contraction] (U1) edge (dU1) -- ($(U1)+(0.2,0)$)
                        ($(UCL)-(0.2,0)$) -- (UCL) edge (dUCL) -- ($(UC)-(0.2,0)$)
                        ($(UC)+(0,-0.2)$) edge (dUC)
                        ($(UC)+(0.2,0)$) -- (UCR) edge (dUCR) -- ($(UCR)+(0.2,0)$)
                        (UM) edge (dUM) -- ($(UM)-(0.2,0)$);
    \draw[contractionDots] ($(U1)+(0.2,0)$) -- ($(UCL)-(0.2,0)$)
                            ($(UCR)+(0.2,0)$) -- ($(UM)-(0.2,0)$);

    \Orth{(U1)}{-45}
    \Orth{(UCL)}{-45}
    \Orth{(UCR)}{45}
    \Orth{(UM)}{45}
\end{tikzpicture} .
\end{equation}
The microstep that updates the $m$\textsuperscript{th} component tensor $\boldsymbol{V}_m$ of $\boldsymbol{V}$ can the be formalized as
\begin{equation} \label{eq:ALS_microstep}
    \underset{\boldsymbol{V}_m}{\text{minimize}}\ \norm{\boldsymbol{u} - \boldsymbol{M}\xhat{\boldsymbol{V}}_m \boldsymbol{V}_m}_2^2
\end{equation}

The ALS algorithm is not designed to restrict the variation function explicitly.
To show that this also does not happen implicitly, we define the linear subspace
\begin{equation}
    \mcal{V}_{\xhat{\boldsymbol{V}}_m} := \braces{\pars{\xhat{\boldsymbol{V}}_m\boldsymbol{V}_m, \boldsymbol{b}\otimes\cdots\otimes\boldsymbol{b}}_{\mathrm{Fro}} : \boldsymbol{V}_m\in\mbb{R}^{r_{m-1}\times d_m\times r_m}} .
\end{equation}
This is the space over which the microstep on the $m$\textsuperscript{th} component of the tensor $\boldsymbol{V}$ optimizes.
It is easy to see that: %

\begin{theorem} \label{thm:KUMicrostep}
    The bound $\mfrak{K}_{\mcal{V}_{\xhat{\boldsymbol{V}}_m}} \le \mfrak{K}_{\mcal{V}_{d}^{\otimes M}}$ is sharp.
    Moreover, if $b_0 \equiv 1$, then every microstep can increase the variation constant by a factor of up to $\mfrak{K}_{\mcal{V}_{d}}$.
\end{theorem}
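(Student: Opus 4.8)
The plan is to exploit the tensor-product structure of the microstep space together with the calculus rules of \cref{thm:K_properties}. First I would identify $\mcal{V}_{\xhat{\boldsymbol{V}}_m}$ explicitly. Writing $\braces{L_\alpha}_{\alpha=1}^{r_{m-1}}\subseteq\mcal{V}_d^{\otimes(m-1)}$ and $\braces{R_\beta}_{\beta=1}^{r_m}\subseteq\mcal{V}_d^{\otimes(M-m)}$ for the orthonormal functions represented by the left- and right-orthogonal parts of $\xhat{\boldsymbol{V}}_m$, the contraction in the definition of $\mcal{V}_{\xhat{\boldsymbol{V}}_m}$ shows that this space is spanned by the products $L_\alpha(y_{<m})\,b_k(y_m)\,R_\beta(y_{>m})$. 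Since these three factors live on disjoint blocks of variables under the product measure $\rho^{\otimes M}$, we obtain the orthogonal tensor decomposition $\mcal{V}_{\xhat{\boldsymbol{V}}_m}=\mcal{L}\otimes\mcal{V}_d\otimes\mcal{R}$ with $\mcal{L}:=\inner{L_\alpha}$ and $\mcal{R}:=\inner{R_\beta}$. In particular $\mcal{V}_{\xhat{\boldsymbol{V}}_m}\subseteq\mcal{V}_d^{\otimes M}$, so the bound $\mfrak{K}_{\mcal{V}_{\xhat{\boldsymbol{V}}_m}}\le\mfrak{K}_{\mcal{V}_d^{\otimes M}}$ is immediate from the monotonicity that follows from \cref{thm:K_properties:union}, while \cref{thm:K_properties:tensor_product} yields the factorization $\mfrak{K}_{\mcal{V}_{\xhat{\boldsymbol{V}}_m}}=\mfrak{K}_{\mcal{L}}\cdot\mfrak{K}_{\mcal{V}_d}\cdot\mfrak{K}_{\mcal{R}}$.

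For sharpness I would fix a point $\bar y=(\bar y_1,\ldots,\bar y_M)$ at (or approaching) the essential supremum of $\mfrak{K}_{\mcal{V}_d^{\otimes M}}$ and align the free frames with the reproducing directions at $\bar y$. Writing $K_{y}:=\sum_k\overline{b_k(y)}\,b_k\in\mcal{V}_d$ for the single-mode reproducing kernel, one has $\mfrak{K}_{\inner{K_{\bar y_j}}}(\bar y_j)=\norm{K_{\bar y_j}}^{-2}\abs{K_{\bar y_j}(\bar y_j)}^2=\mfrak{K}_{\mcal{V}_d}(\bar y_j)$. Because $\bigotimes_{j<m}K_{\bar y_j}$ is a rank-$1$ tensor, it is representable by a left-orthogonal TT of any rank $\ge 1$; choosing $\mcal{L}$ to contain it (and $\mcal{R}$ analogously on the right) forces $\mfrak{K}_{\mcal{L}}(\bar y_{<m})=\mfrak{K}_{\mcal{V}_d^{\otimes(m-1)}}(\bar y_{<m})$ and $\mfrak{K}_{\mcal{R}}(\bar y_{>m})=\mfrak{K}_{\mcal{V}_d^{\otimes(M-m)}}(\bar y_{>m})$. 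Combining the factorization above with the pointwise identity $\mfrak{K}_{\mcal{V}_d^{\otimes k}}(y)=\prod_{j\le k}\mfrak{K}_{\mcal{V}_d}(y_j)$ from \cref{thm:K_properties:tensor_product} then gives $\mfrak{K}_{\mcal{V}_{\xhat{\boldsymbol{V}}_m}}(\bar y)=\mfrak{K}_{\mcal{V}_d^{\otimes M}}(\bar y)$, so the $\norm{\bullet}_{w,\infty}$-norms agree and the bound is sharp.

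For the second assertion I would track a single left-to-right microstep. When the sweep passes from position $m$ to $m+1$ the left-orthogonal part absorbs the updated component $m$, so the new left frame satisfies $\mcal{L}^{(m+1)}\subseteq\mcal{L}^{(m)}\otimes\mcal{V}_d$, while the components at positions $m+2,\ldots,M$ are untouched, so the inner right frame $\mcal{R}^{(m+1)}$ is unchanged and only mode $m+1$ is released. Monotonicity together with \cref{thm:K_properties:tensor_product} gives $\mfrak{K}_{\mcal{L}^{(m+1)}}\le\mfrak{K}_{\mcal{V}_d}\cdot\mfrak{K}_{\mcal{L}^{(m)}}$, which is the claimed amplification factor. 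To see it is attained when $b_0\equiv1$, I would choose the released component so that mode $m+1$ carries only the constant $b_0$; then $\mcal{R}^{(m)}=\inner{b_0}\otimes\mcal{R}^{(m+1)}$ is isometric to $\mcal{R}^{(m+1)}$, whence $\mfrak{K}_{\mcal{R}^{(m)}}=\mfrak{K}_{\mcal{R}^{(m+1)}}$ and the released mode contributes no compensating decrease. Updating component $m$ so that $\mcal{L}^{(m+1)}$ contains the rank-$1$ reproducing direction of $\mcal{L}^{(m)}\otimes\mcal{V}_d$ at a maximizing point, exactly as in the sharpness argument, turns the left inequality into an equality there, so the variation constant is multiplied by $\mfrak{K}_{\mcal{V}_d}$ across the microstep.

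The main obstacle I expect is the bookkeeping in the second part: one must verify that the left-orthogonalization following a microstep can realize \emph{any} rank-$r_m$ subspace of $\mcal{L}^{(m)}\otimes\mcal{V}_d$, so that the reproducing direction is attainable, and that the constant-carrying choice of the released component is simultaneously compatible with the prescribed TT rank constraints. The reproducing-kernel alignment itself is clean, since the kernel of a product space factorizes into rank-$1$ single-mode kernels, but confirming that $b_0\equiv1$ renders the mode-release lossless, without conflicting with the left update, requires care with the orthogonality conventions of the tensor-train representation.
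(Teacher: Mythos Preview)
Your sharpness argument is essentially the paper's, rephrased in reproducing-kernel language. Where you build $\mcal{L}$ and $\mcal{R}$ around the single-mode kernels $K_{\bar y_j}$, the paper takes a maximizing sequence $\{y_k\}$ and sets $\boldsymbol{V}^k:=\boldsymbol{B}(y_k)=\boldsymbol{b}(y_{k,1})\otimes\cdots\otimes\boldsymbol{b}(y_{k,M})$; in coefficient form this \emph{is} your rank-$1$ kernel tensor, so both arguments insert the same direction into the local space and read off equality at the maximizing point.

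For the second claim the paper takes a much shorter route than you do. It writes down one explicit rank-$1$ example: start from $\boldsymbol{V}=e_1^{\otimes M}$, so every left and right frame is the constant $b_0\equiv 1$ and $\mfrak{K}_{\mcal{V}_{\xhat{\boldsymbol{V}}_m}}=\mfrak{K}_{\mcal{V}_d}$; then assume the microstep at position $m$ outputs $\boldsymbol{W}=e_1^{\otimes(m-1)}\otimes\boldsymbol{1}\otimes e_1^{\otimes(M-m)}$ and compute $\mfrak{K}_{\mcal{V}_{\xhat{\boldsymbol{W}}_n}}=\mfrak{K}_{\mcal{V}_d}^2$ at any $n\ne m$. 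Because everything stays rank~$1$, all frames are one-dimensional and the orthogonalization bookkeeping you flag as the main obstacle becomes vacuous. Your structural route via $\mcal{L}^{(m+1)}\subseteq\mcal{L}^{(m)}\otimes\mcal{V}_d$ together with a constant-carrying released mode is correct and makes the mechanism more transparent---indeed your reproducing-kernel update is precisely the choice that saturates the inequality at a maximizing point---but the paper's proof shows that a single two-line example already suffices, so the general frame analysis and the compatibility checks you anticipate can be avoided entirely.
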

\begin{proof}
    Let $\boldsymbol{B}\pars{y} := \boldsymbol{b}\pars{y_1}\otimes\cdots\otimes\boldsymbol{b}\pars{y_M}$ and recall that
    \begin{equation}
        \norm{\mfrak{K}_{\mcal{V}_{d}^{\otimes M}}}_\infty
        = \sup_{v\in\mcal{V}_{d}^{\otimes M}} \frac{\norm{v}_\infty^2}{\norm{v}^2}
        = \sup_{y\in Y} \sup_{\boldsymbol{V}\in\mbb{R}^{d_1\times\cdots\times d_M}} \frac{\boldsymbol{V}^\intercal\boldsymbol{B}\pars{y}\boldsymbol{B}\pars{y}^\intercal\boldsymbol{V}}{\norm{\boldsymbol{V}}_{\mathrm{Fro}}^2}
        = \sup_{y\in Y} \boldsymbol{B}\pars{y}^\intercal \boldsymbol{B}\pars{y} .
    \end{equation}
    Let $\braces{y_k}_{k\in\mbb{N}}$ be a maximizing sequence and define $\boldsymbol{V}^k := \boldsymbol{B}\pars{y_k}$ for every $k\in\mbb{N}$.
    Then 
    \begin{equation}
        \norm{\mfrak{K}_{\mcal{V}_{d}^{\otimes M}}}_\infty
        \ge \lim_{k\to\infty} \norm{\mfrak{K}_{\mcal{V}_{\xhat{\boldsymbol{V}}^k_m}}}_\infty
        \ge \lim_{k\to\infty} \boldsymbol{B}\pars{y_k}^{\intercal}\boldsymbol{B}\pars{y_k}
        = \norm{\mfrak{K}_{\mcal{V}_{d}^{\otimes M}}}_\infty .
    \end{equation}
    Moreover, if $b_0 \equiv 1$ and $\boldsymbol{V} = e_1^{\otimes M}$ then $\mfrak{K}_{\mcal{V}_{\xhat{\boldsymbol{V}}_m}} = \mfrak{K}_{\mcal{V}_{d}}$.
    Assume that the microstep results in a function with coefficient tensor $\boldsymbol{W} = e_1^{\otimes m-1} \otimes \boldsymbol{1}\otimes e_1^{\otimes M-m}$.
    Then $\mfrak{K}_{\mcal{V}_{\xhat{\boldsymbol{W}}_n}} = \mfrak{K}_{\mcal{V}_{d}}^2$ for every $n\ne m$.
\end{proof}

\subsection{A modified ALS algorithm}

The root problem in \cref{thm:KUMicrostep} is the microstep itself, which may result in local spaces $\mcal{V}_{\xhat{\boldsymbol{V}}_m}$ with ever increasing variation function, eventually approaching that of the ambient tensor space $\mcal{V}_{d}^{\otimes M}$.
The microstep is therefore the natural leverage point for a modification of the ALS algorithm.
To design an ALS microstep with bounded variation constant we thus modify the $m$\textsuperscript{th} microstep by restricting the admissible set from the linear space $\mcal{V}_{\xhat{\boldsymbol{V}}_m}$ to a reduced, \emph{nonlinear} set $\mcal{M}_{\xhat{\boldsymbol{V}}_m}$.
The resulting optimization problem reads
\begin{equation}
    \label{eq:micro_step}
    \underset{\boldsymbol{V}_m}{\text{minimize}}\ \norm{\boldsymbol{u} - \boldsymbol{M}\xhat{\boldsymbol{V}}_m\boldsymbol{V}_m}_2^2 \qquad \text{subject to} \qquad \boldsymbol{V}_m \in \mcal{M}_{\xhat{\boldsymbol{V}}_m}
\end{equation}
where, compared to \eqref{eq:ALS_microstep} only the linear space $\mcal{V}_{\xhat{\boldsymbol{V}}_m}$ has been replaced by the nonlinear set $\mcal{M}_{\xhat{\boldsymbol{V}}_m}$.
Inspired by \cite{Rauhut2016weighted_l1}, we choose $\mcal{M}_{\xhat{\boldsymbol{V}}_m}$ as a set of weighted sparsity
\begin{equation}
    \mcal{M}_{\xhat{\boldsymbol{V}}_m} := \braces{v\in\mcal{V}_{\xhat{\boldsymbol{V}}_m} \,:\, \norm{v}_{\omega,0}\le s},
\end{equation}
where $\norm{\bullet}_{\omega,0}$ is defined as in \cref{ex:sparse_regression}.
In \cite[Section~3.2]{EST20} it is shown that $\mfrak{K}_{\braces{u_{\mcal{M}_{\xhat{\boldsymbol{V}}_m}}} - \mcal{M}_{\xhat{\boldsymbol{V}}_m}} \le 2s$
when $\omega_j \ge \norm{B_{\xhat{\boldsymbol{V}}_m, j}}_\infty$ for all $j\in\bracs{r_{m-1}\times d_m\times r_m}$ where $B_{\xhat{V}_m} = \xhat{\boldsymbol{V}}_m^\intercal \boldsymbol{B}^{\otimes M}$ is an orthonormal basis for the local space $\mcal{V}_{\xhat{\boldsymbol{V}}_m}$.

\begin{remark}
    Note that the sparsity is only used to bound the variation function and is lost during the orthogonalization steps that are performed in a classical ALS implementation.\footnote{These orthogonalization steps are required to improve numerical stability.}
\end{remark}

A classical approach to handle the sparsity constraints in \eqref{eq:micro_step} is to promote the $\ell^0$-constraints via an $\ell^1$-regularization term.
The resulting problem then reads
\begin{equation} \label{eq:weighted_lasso}
    \underset{\boldsymbol{V}_m}{\text{minimize}}\ \norm{y - M\xhat{\boldsymbol{V}}_m \boldsymbol{V}_m}_2^2 + \lambda\norm{\omega\odot \boldsymbol{V}_m}_1 .
\end{equation}
The regularization parameter $\lambda$ controls the sparsity of $\boldsymbol{V}_m$ and is discussed at the end of this subsection.

To choose the weight sequence $\omega_i$ appropriately we have to compute the norms $\norm{B_{\xhat{\boldsymbol{V}}_m,j}}_{\infty}$ for all $j\in\bracs{r_{m-1}\times d_m\times r_m}$.
This is a difficult problem in general and has to be repeated in every microstep, since the local basis $B_{\xhat{\boldsymbol{V}}_m}$ depends on $\xhat{\boldsymbol{V}}_m$.
To obtain an estimate of $\norm{B_{\xhat{\boldsymbol{V}}_m,j}}_{\infty}$ in a numerically feasible fashion, we use the fact that every finite dimensional linear space is a \emph{reproducing kernel Hilbert space} (RKHS).
Since the norm of a RKHS $\mcal{H}$ satisfies the property that $\norm{\bullet}_\infty \le C \norm{\bullet}_{\mcal{H}}$ we can choose $\omega_i := C\norm{B_{\xhat{\boldsymbol{V}}_m,i}}_{\mcal{H}} \ge \norm{B_{\xhat{\boldsymbol{V}}_m,i}}_\infty$.
\begin{example}
    Let $\braces{B_j}_{j=1,\ldots,d}$ be an arbitrary basis and define $G = \operatorname{diag}\pars{\norm{B_1}_\infty^2, \ldots, \norm{B_d}_\infty^2}$.
    Using the triangle inequality and Jensen's inequality, we can estimate $\abs{v\pars{x}}^2 \le \pars{\sum_{j=1}^d \abs{\boldsymbol{v}_j}\norm{B_j}_\infty}^2 \le d {\boldsymbol v}^\intercal G {\boldsymbol v}$.
    A simple choice for an RKHS inner product is thus given by $\pars{u,v}_{\mcal{H}} := {\boldsymbol u}^\intercal G {\boldsymbol v}$.
\end{example}
\begin{example}
    The standard Sobolev space $H^s\pars{\mbb{R}^d}$, with arbitrary positive integers $d$ and $s > \frac{d}{2}$ is a RKHS with $C \le \frac{d+1}{2^{\pars{d+1}/2}\pi^{d/4}}$.
    For a proof of this claim we refer to~\cite{novak_2018_sobolev_rkhs}.
\end{example}

Recall that $\mcal{M}\subseteq\mcal{V}_{d}^{\otimes M}$, where the $d$-dimensional, uniform space $\mcal{V}_{d}$ is spanned by the basis $\braces{b_j}_{j=1,\ldots,d}$.
Given a RKHS inner product $\pars{\bullet,\bullet}_{\mcal{H}_{d}}$ for the univariate spaces $\mcal{H}_{d}=\mcal{V}_{d}$, we define the corresponding Gramian $g_{ij} = \pars{b_i, b_j}_{\mcal{H}_{d}}$.
This induces a RKHS  inner product on the global space $\mcal{H} = \mcal{H}_{d}^{\otimes M}$ and the corresponding Gramian is given by $G = g^{\otimes M}$.
The Gramian of the local model space $\mcal{V}_{\xhat{V}_m}$ is then given by
\begin{equation} \label{eq:local_gramian}
    H_{ij}
    = \pars{B_{\xhat{\boldsymbol{V}}_m,i}, B_{\xhat{\boldsymbol{V}}_m,j}}_{\mcal{H}}
    = \sum_{k,l\in\bracs{d}^M} \xhat{\boldsymbol{V}}_{m,ki} \xhat{\boldsymbol{V}}_{m,lj} \pars{B^{\otimes M}_k, B^{\otimes M}_l}_{\mcal{H}}
    = \pars{\xhat{\boldsymbol{V}}_{m}^\intercal G \xhat{\boldsymbol{V}}_{m}}_{ij}
\end{equation}
Due to the product structure of $G=g^{\otimes M}$, this quantity can be computed easily in the tensor train format via the contraction diagram
\begin{equation}
\begin{tikzpicture}[baseline=(current bounding box.center)]
    \node (H) {$H$};
    \node[right=0.2 of H] (eqH) {$=$};
    
    \node[core, right=1.0 of eqH, label={[anchor=east, label distance=5]left:$G$}] (H1) {};
    \node[core, right=0.75 of H1] (HCL) {};
    \node[core, right=0.75 of HCL] (HC) {};
    \node[core, right=0.75 of HC] (HCR) {};
    \node[core, right=0.75 of HCR] (HM) {};

    \node[core, above=0.5 of H1, label={[anchor=east, label distance=5]left:$\xhat{\boldsymbol{V}}_m$}] (U1) {};
    \node[core, above=0.5 of HCL] (UCL) {};
    \node[core, above=0.5 of HC] (UC) {};
    \node[core, above=0.5 of HCR] (UCR) {};
    \node[core, above=0.5 of HM] (UM) {};

    \node[core, below=0.5 of H1, label={[anchor=east, label distance=5]left:$\xhat{\boldsymbol{V}}_m$}] (V1) {};
    \node[core, below=0.5 of HCL] (VCL) {};
    \node[core, below=0.5 of HC] (VC) {};
    \node[core, below=0.5 of HCR] (VCR) {};
    \node[core, below=0.5 of HM] (VM) {};

    \draw[contraction] (U1) -- ($(U1)+(0.2,0)$)
                        ($(UCL)-(0.2,0)$) -- (UCL) -- ($(UC)-(0.2,0)$)
                        ($(UC)+(0.2,0)$) -- (UCR) -- ($(UCR)+(0.2,0)$)
                        (UM) -- ($(UM)-(0.2,0)$)
                        (U1) -- (V1)
                        (UCL) -- (VCL)
                        ($(UC)-(0,0.2)$) -- ($(VC)+(0,0.2)$)
                        (UCR) -- (VCR)
                        (UM) -- (VM)
                        (V1) -- ($(V1)+(0.2,0)$)
                        ($(VCL)-(0.2,0)$) -- (VCL) -- ($(VC)-(0.2,0)$)
                        ($(VC)+(0.2,0)$) -- (VCR) -- ($(VCR)+(0.2,0)$)
                        (VM) -- ($(VM)-(0.2,0)$)
                        ;
    \draw[contractionDots] ($(U1)+(0.2,0)$)  -- ($(UCL)-(0.2,0)$)
                            ($(UCR)+(0.2,0)$) -- ($(UM)-(0.2,0)$)
                            ($(V1)+(0.2,0)$)  -- ($(VCL)-(0.2,0)$)
                            ($(VCR)+(0.2,0)$) -- ($(VM)-(0.2,0)$);

    \Orth{(U1)}{-45}
    \Orth{(UCL)}{-45}
    \Orth{(UCR)}{45}
    \Orth{(UM)}{45}
    \Core{(H1)}
    \Core{(HCL)}
    \Core{(HC)}
    \Core{(HCR)}
    \Core{(HM)}
    \Orth{(V1)}{-135}
    \Orth{(VCL)}{-135}
    \Orth{(VCR)}{135}
    \Orth{(VM)}{135}
\end{tikzpicture}
\end{equation}
We can thus choose $\omega_i := C\sqrt{H_{ii}} = C\norm{B_{\xhat{\boldsymbol{V}}_m}}_{\mcal{H}} \ge \norm{B_{\xhat{\boldsymbol{V}}_m}}_\infty$.
Defining $D := \operatorname{diag}\pars{\operatorname{diag}\pars{H}}^{1/2}$ and substituting $U = D\boldsymbol{V}_m$ into \eqref{eq:weighted_lasso} we obtain the standard LASSO equation
\begin{equation} \label{eq:std_lasso}
    \underset{U}{\text{minimize}}\ \norm{y - M\xhat{\boldsymbol{V}}_mD^{-1}U}_2^2 + \lambda C\norm{U}_1 .
\end{equation}

For simplicity, we choose $\lambda$ by $10$-fold cross-validation.
This allows us to drop the factor $C$ and allows the algorithm to choose a different regularization parameter $\lambda$, i.e.\ a different sparsity level, for every component $\boldsymbol{V}_m$.

\subsection{Parametrization independent regularization}

Recall that the component $\boldsymbol{V}_m$ and the operator $\xhat{\boldsymbol{V}}_m$ are defined only up to orthogonal transformation since
\begin{equation}
    \xhat{\boldsymbol{V}}_m \boldsymbol{V}_m = \pars{\xhat{\boldsymbol{V}}_m Q} \pars{Q^\intercal \boldsymbol{V}_m} .
\end{equation}
where $Q = \pars{Q_{\mathrm{L}} \otimes \operatorname{Id}_{d_m} \otimes Q_{\mathrm{R}}}$ for any two orthogonal matrices $Q_{\mathrm{L}}\in \mbb{R}^{r_{m-1}\times r_{m-1}}$ and $Q_{\mathrm{R}}\in \mbb{R}^{r_{m}\times r_{m}}$.
This means that the regularization term in \eqref{eq:weighted_lasso} is not well-defined, since every orthogonal transformation $Q$ corresponds to a different basis $B_{\xhat{\boldsymbol{V}}_m Q}$.
This ambiguity can be resolved by selecting a specific orthonormal basis $\braces{\widetilde{B}_j}_{j=1,\ldots,D}$ with $D = r_{m-1} d_m r_m$.
We propose to do this iteratively, by defining
\begin{equation} \label{eq:optimal_spaces}
    \mcal{W}_{d} = \operatorname{span}\braces{\widetilde{B}_k}_{k=1}^d
    \qquad \widetilde{B}_{d+1} \in \argmin_{\substack{f\in\mcal{W}_{D}/\mcal{W}_d \\ \norm{f}=1}} \norm{f}_\infty
\end{equation}
where we use the convention, that $\inner{\emptyset} = \braces{0}$ and where $\mcal{W}_{D}/\mcal{W}_d$ denotes the orthogonal complement of $\mcal{W}_{d}$ in $\mcal{W}_D$.
Selecting the basis in this way ensures that $\mcal{W}_d$ has the variation function with the lowest $\norm{\bullet}_\infty$-norm under all subspaces $\widetilde{\mcal{W}}_d\subseteq \mcal{W}_{D}$ of dimension $d$.
The intuition for this is that, although we do not know $u_{\mcal{W}_{D}}$, we can assume that $\mfrak{K}_{u_{\mcal{W}_{D}}}$ is small and that $u_{\mcal{W}_{D}}$ can be approximated with high accuracy in the spaces $\mcal{W}_d$, even for low $d$.

To do this in a numerically feasible way, we replace every $\norm{\bullet}_\infty$ in \eqref{eq:optimal_spaces} by a $\norm{\bullet}_{\mcal{H}}$.
Using the spectral decomposition $H = QSQ^\intercal$, we can write $\widetilde{\boldsymbol{B}} = Q^\intercal \boldsymbol{B}_{\xhat{\boldsymbol{V}}_m} = \boldsymbol{B}_{\xhat{\boldsymbol{V}}_m Q}$ and obtain the diagonal weight matrix $D = \sqrt{S}$ for \eqref{eq:std_lasso}.

\begin{remark}
    Note that this basis is uniquely defined if the minimizer in \cref{eq:optimal_spaces} is unique and that it still satisfies the $L^2$-orthogonality condition that is required in~\cite[Section~3.2]{EST20}.
\end{remark}

We call the resulting algorithm \emph{restricted alternating least-squares} (RALS) since it modifies a standard ALS method by restricting the microsteps.
A listing of the complete algorithm, in pseudo-code, is provided in \cref{alg:rals}.
There it can be seen that the algorithm differs from a standard ALS only in two points.
The standard regeression in the microstep is replaced by a LASSO and an additional operator, namely the Gramian $H$, needs to be computed.
It is therefore straight-forward to implement.

\begin{algorithm}[t!]
\KwData{Data pairs $(x^{i},y^{i})\in\mathbb{R}^M\times \mathbb{R}$ for $i = 1,\ldots,n$, univariate basis functions $\braces{b_1,\ldots, b_d}$, univariate Gramians $G_m$ for $m=1,\ldots,M$.}
\KwResult{Coefficient tensor $\boldsymbol{V}$ of a function $v\in\mcal{M}$ that approximates the data.}
\BlankLine

Initialize the coefficient tensor $\boldsymbol{V}$\;
\While{not converged}{
    Right orthogonalize $\boldsymbol{V}$\;
    \For{$m=1,\ldots,M$}{
        Compute the optimal basis according to Equation~\eqref{eq:optimal_spaces}\;
        Compute the Gramian $H$ according to Equation~\eqref{eq:local_gramian}\;
        Compute $D := \operatorname{diag}\pars{\operatorname{diag}\pars{H}}^{1/2}$\;
        Update $\boldsymbol{V}_m$ by solving Equation~\eqref{eq:std_lasso} using cross-validation\;
        Left orthogonalize $\boldsymbol{V}_m$ and adapt the $m$\textsuperscript{th} rank\;
    }
}
\Return{ $\boldsymbol{V}$ }
\caption{Restricted Alternating Least-Squares (RALS)}
\label{alg:rals}
\end{algorithm}

The preceding algorithm provably satisfies the design principles.
But the necessity of an additional operator, the handling of potential floating point under- and overflows in its construction, and the numerical stability of the final orthogonalization procedure result in a computationally costly algorithm.
Taking a step back and reexamining \eqref{eq:std_lasso} reveals that this is not necessary.

Observe that $\widetilde{B}$ is $L^2$-orthonormal and $\mcal{H}$-orthogonal basis and that the substitution $U = D\boldsymbol{V}_m$ can be seen as a basis transform that produces a $L^2$-orthogonal and $\mcal{H}$-orthonormal basis.
\eqref{eq:std_lasso} then finds a sparse coefficient tensor in this basis.
A similar effect can be achieved by a transformation of the global basis $b^{\otimes M}$.
By $\mcal{H}$-orthonormalizing the global basis $b^{\otimes M}$ we obtain a $\mcal{H}$-orthonormal local basis $B_{\xhat{\boldsymbol{V}}_m}$.
This basis does not necessesarily constitute an $L^2$-orthogonal basis, but still is a Riesz-sequence for which Theorem~3.11 from~\cite{EST20} can be employed.
The resulting problem then reads
\begin{equation} \label{eq:unweighted_lasso}
    \underset{\boldsymbol{V}_m}{\text{minimize}}\ \norm{y - M\xhat{\boldsymbol{V}}_m\boldsymbol{V}_m}_2^2 + \lambda C\norm{\boldsymbol{V}_m}_1 .
\end{equation}
In this case we do not have to perform a resubstitution and directly obtain the solution.
The sparsity that is promoted by the algorithm in the component tensor $\boldsymbol{V}_m$ can then be interpreted as a gauge condition.

\begin{remark}
    Note that $\mcal{H}$-orthonormalizing the global basis $b^{\otimes M}$ can be done very efficiently by $\mcal{H}_{d}$-othonormalizing the univariate basis $b$.
\end{remark}
\begin{remark}
    \leavevmode\setlength{\parskip}{0px}
    For many choices of $\mcal{H}$, there exists a unique $L^2$-orthonormal and $\mcal{H}$-orthogonal basis.
    
    To see this, let $B$ be any $L^2$-orthonormal basis and define the Gramian $G_{ij} = \pars{B_i, B_j}_{\mcal{H}}$ as well as its spectral decomposition $G = QSQ^\intercal$.
    Then the basis $Q^\intercal B$ is $L^2$-orthonormal and $\mcal{H}$-orthogonal. 
    It is unique, since $Q$ is uniquely defined.
\end{remark}

It is easy to imagine that the quality of the resulting algorithm immensely depends on the choice of the RKHS $\mcal{H}_{d}$.
Since all norms in finite dimensional vector spaces are equivalent every space is a RKHS but the quality of \eqref{eq:weighted_lasso} depends on the tightness of the bound, i.e.\ on the size of $C$.
Moreover, for the second algorithm we have an additional requirement: the tightness of the Riesz sequence $B_{\xhat{V}_m}$, which depends on $\mcal{H}$ as well as $\xhat{V}_m$.
This means that it depends on the sought function $u$ itself.

The resulting algorithm is called \emph{Riesz-sequence restricted ALS} (R\textsuperscript{2}\!ALS) and is listed in \cref{alg:rals_heuristic}.
It is significantly easier to implement than \cref{alg:rals}, since it differs from a standard ALS merely by a preceding orthogonalization of the basis and by the restriction of the microstep.
The LASSO that is employed in both algorithms is a standard LASSO for which highly optimized implementations are available.

\begin{algorithm}[t!]
\KwData{Data pairs $(x^{i},y^{i})\in\mathbb{R}^M\times \mathbb{R}$ for $i = 1,\ldots,n$, univariate basis functions $\braces{b_1,\ldots,b_d}$, univariate Gramians $G_m$ for $m=1,\ldots,M$.}
\KwResult{Coefficient tensor $\boldsymbol{V}$ of a function $v\in\mcal{M}$ that approximates the data.}
\BlankLine

Initialize the coefficient tensor $\boldsymbol{V}$\;
\For{$m=1,\ldots,M$}{
    Orthonormalize the univariate basis w.r.t.\ $G_m$\;
}
\While{not converged}{
    Right orthogonalize $\boldsymbol{V}$\;
    \For{$m=1,\ldots,M$}{
        Update $\boldsymbol{V}_m$ by solving Equation~\eqref{eq:unweighted_lasso} using cross-validation\;
        Left orthogonalize $\boldsymbol{V}_m$ and adapt the $m$\textsuperscript{th} rank\;
    }
}
\Return{ $\boldsymbol{V}$ }
\caption{Riesz-sequence Restricted Alternating Least-Squares (R\textsuperscript{2}\!ALS)}
\label{alg:rals_heuristic}
\end{algorithm}

\subsection{Rank adaptivity and numerical stability}

Both, RALS and R\textsuperscript{2}\!ALS, allow for a straight-forward integration of rank-adaptivity.
The heuristic in \cref{alg:rals_heuristic} penalizes the $\ell^1$-norm of the core tensor $V_m$ which, by the following theorem, provides a \emph{tight} upper bound for the Schatten-$1$ norm.
\begin{theorem}[\citet{Kong2019nuclearBound}]
    Let $\norm{\bullet}_*$ denote the nuclear norm and $\norm{\bullet}_\sigma$ the spectral norm of a matrix.
    Moreover, let $\norm{\bullet}_1$ be the sum of moduli of its entries.
    Then
    $$
        \norm{T}_* \le \norm{T}_1
    $$
    where equality holds for diagonal matrices.
\end{theorem}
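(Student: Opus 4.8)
The plan is to read the inequality off the triangle inequality for the nuclear norm. The first thing I would record is that $\norm{\bullet}_*$ is a genuine norm on matrices: it is the dual of the spectral norm with respect to the Frobenius pairing, i.e. $\norm{T}_* = \sup_{\norm{Z}_\sigma \le 1} \inner{Z,T}$, and a supremum of linear functionals is automatically absolutely homogeneous and subadditive. In particular $\norm{\bullet}_*$ satisfies the triangle inequality, which is the only structural property the argument needs. The idea is then to expand $T$ in the standard matrix basis and to bound each rank-one contribution by its (trivially computable) nuclear norm.

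Concretely, I would write $T = \sum_{i,j} T_{ij}\, e_i e_j^\intercal$, where $e_i$ and $e_j$ range over the standard basis vectors of the row- and column-spaces. Applying absolute homogeneity and the triangle inequality, and using that each $e_i e_j^\intercal$ is a rank-one matrix whose single nonzero singular value is $\norm{e_i}_2\,\norm{e_j}_2 = 1$, so that $\norm{e_i e_j^\intercal}_* = 1$, gives
\begin{equation}
    \norm{T}_* = \norm*{\sum_{i,j} T_{ij}\, e_i e_j^\intercal}_* \le \sum_{i,j}\abs{T_{ij}}\,\norm{e_i e_j^\intercal}_* = \sum_{i,j}\abs{T_{ij}} = \norm{T}_1 ,
\end{equation}
which is exactly the claimed bound. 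For the equality assertion I would take $T = \operatorname{diag}\pars{t_1,\ldots,t_k}$: its singular values are the moduli $\abs{t_1},\ldots,\abs{t_k}$, so $\norm{T}_* = \sum_i \abs{t_i}$, while its only nonzero entries are $t_1,\ldots,t_k$, so $\norm{T}_1 = \sum_i \abs{t_i}$ as well, and the two coincide.

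There is no genuinely hard step here; the only points requiring any care are the triangle inequality for $\norm{\bullet}_*$ (guaranteed by the dual characterization above) and the elementary fact $\norm{e_i e_j^\intercal}_* = 1$. An equivalent route would bypass the decomposition and argue by duality directly: every entry obeys $\abs{Z_{ij}} = \abs{e_i^\intercal Z e_j} \le \norm{Z}_\sigma$, so the spectral-norm unit ball is contained in the entrywise $\ell^\infty$ ball, and passing to dual norms (a supremum over the smaller set) yields $\norm{T}_* \le \norm{T}_1$, with the extremal dual variable $Z_{ij} = \operatorname{sign}\pars{T_{ij}}$. I would keep the triangle-inequality version as the primary argument, since it avoids invoking the dual norm explicitly and makes the role of the rank-one pieces transparent.
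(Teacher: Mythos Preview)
Your argument is correct: the decomposition $T=\sum_{i,j}T_{ij}\,e_i e_j^\intercal$ together with the triangle inequality for the nuclear norm and $\norm{e_i e_j^\intercal}_*=1$ gives the bound, and the diagonal case is handled correctly. Note, however, that the paper does not supply its own proof of this statement; it is quoted as a result of \citet{Kong2019nuclearBound} and used as a black box, so there is no in-paper argument to compare against. Your proof is a clean, self-contained justification that could stand in for the citation.
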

Since the Schatten-$1$ norm provides a convex surrogate for the rank it seems natural to use this regularization to adapt the rank of our iterates in \cref{alg:rals_heuristic}.

This argument can however not be applied directly to \cref{alg:rals} which uses a \emph{weighted} $\ell^1$ regularization term.
To investigate the influence of the weighting, we plot the singular values of multiple realizations of a random matrix $X$ and its weighted version $\omega\odot X$ in \cref{fig:spectrum}.
There we see that the spectrum of the weighted matrix decays faster.
Since a regularization by a Schatten-$1$ norm can be implemented by soft-thresholding of the singular values, this observation encourages us to use the weighted $\ell^1$-norm as a substitute for the nuclear norm in \cref{alg:rals} as well.

To implement rank-adaptivity practically, we use the approach of stable\slash unstable singular values that was pioneered in \cite{Grasedyck2019SALSA}.
This approach splits the sequence of singular values of a singular value decomposition into two groups.
The first group contains all singular values that exceed a certain threshold.
These are deemed stable and unlikely to change drastically in future iterations.
The second group contains all remaining singular values.
By fixing the size of the second group, dropping the smallest singular values or adding small, random singular values, if necessary, adaptivity is achieved.
Since the $\ell^1$-regularization term promotes a low rank, it promotes the stability of large singular values in favour of smaller ones.

\begin{remark}
    Note that the rank adaptivity is not required to satisfy \cref{imp:overestimation_blowup} but reduces the best approximation error.
\end{remark}

\begin{figure}[ht]
    \centering
    \includegraphics[width=\textwidth]{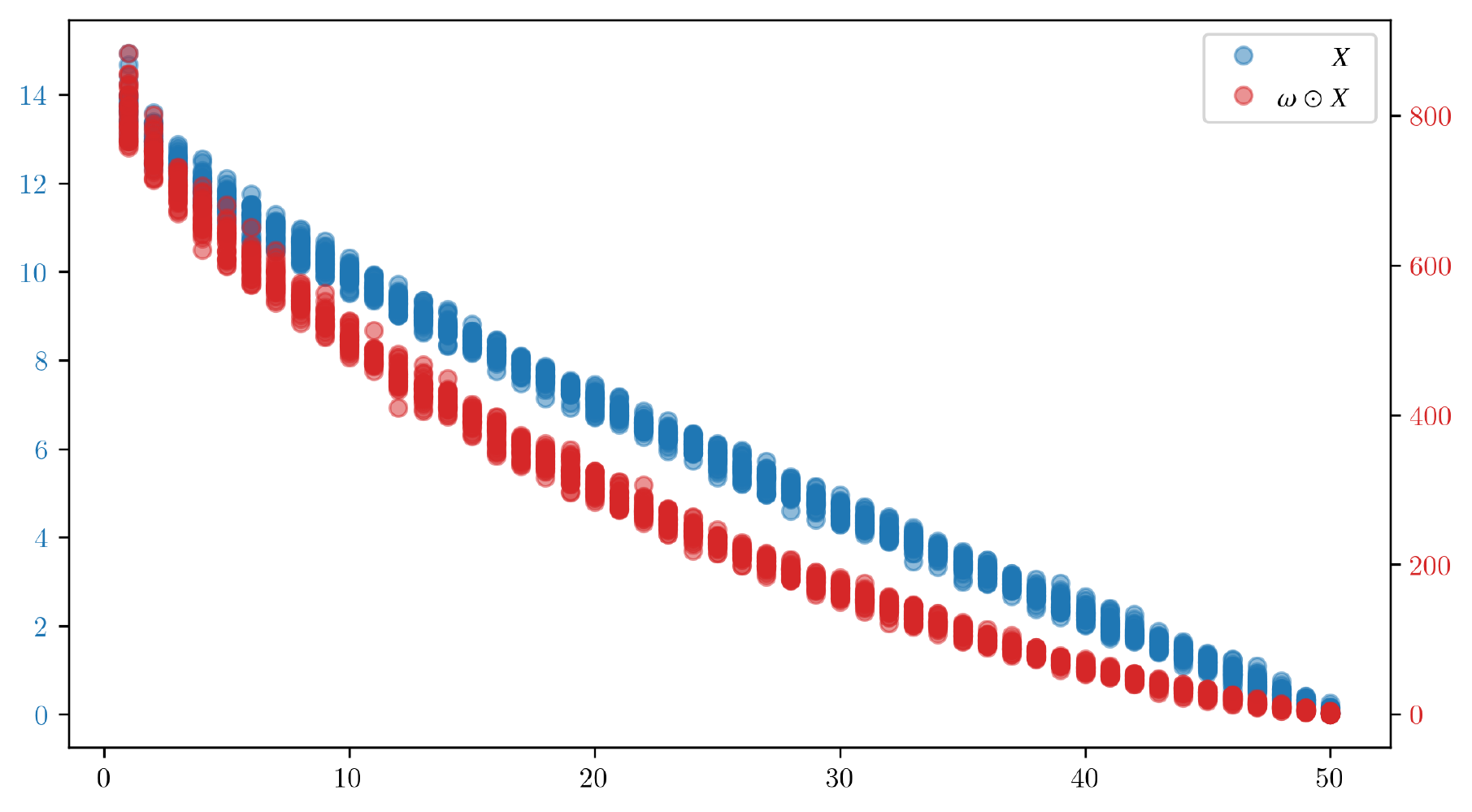}
    \caption{The spectra of matrices with standard normally distributed entries (blue) and their Hadamard product with a weighting matrix (red).
    The weight matrix $\omega$ is chosen as $\omega_{ij}=\sqrt{2i+1}\sqrt{2j+1}$ which corresponds to a basis $B$ of Legendre polynomials on $L^2\pars{\bracs{-1,1}^2}$.}
    \label{fig:spectrum}
\end{figure}

When considering a rank-adaptive augmentation of a given algorithm on tensor networks, the numerical stability of this algorithm is of particular interest.
The importance of the numerical stability, or the insensitivity to small perturbation, comes from the fact that the calibration of the rank requires a small perturbation.
Although the adaptation itself is numerically stable,
it is shown in~\cite{Grasedyck2019SALSA} that the result of an ALS microstep does not depend continuously on the tensor.
This implies that tiny changes in any iteration, such as those that are introduced during rank calibration, may have arbitrarily large influence on the further reconstruction.
To restore numerical stability they derive a regularization term that ensures stability.
It can be easily seen that the presented algorithm is not numerically stable as well.
This however does not result from our adaptation per se, but from the fact that we did not take the stability of the algorithm into account during its development.
We conjecture that, with a suitably modified microstep, our algorithm can be made stable as well.

\section{Experiments} \label{sec:experiments}

For the empirical validation of the R\textsuperscript{2}\!ALS algorithm, we consider a quanity of interest, derived from the stationary, random diffusion problem
\begin{equation}
    \label{eq:experiments:darcy}
    \begin{aligned}
        -\operatorname{div}( a(x,y)\operatorname{grad} w(x,y)) &= f(x), &\mbox{in }D,\\
        w(x,y) &= 0, &\mbox{on }\partial D
    \end{aligned}
\end{equation}
on the unit square $D=[0,1]^2$ and for $y\in Y$, where $Y$ depends on the specific parametrization of $a$.

For the sake of a clear presentation, the source term $f\in L^2(D)$ and the boundary conditions are assumed to be deterministic.
Pointwise solvability of~\eqref{eq:experiments:darcy} for almost all $y\in\mathbb{R}^M$ is guaranteed by a Lax--Milgram argument in~\cite{galvis-sarkis,SG11}.
Well-posedness of the variational parametric problem is way more intricate and we refer to~\cite{SG11} for a detailed discussion.

The solution $u$ often measures the concentration of some substance in the domain $D$ and one may be interested in the total amount of this substance in the entire domain
\begin{equation}
    U(y) := \int_{D} u(x,y) \,\mathrm{d}x .
\end{equation}
This quantity of interest was already considered in~\cite{bouchot_2015_CSPG} where a sparse approximation strategy was proposed.
The feasibility of low-rank approximation is ensured, since the coefficient tensor of $U$ can be sparsely approximated (cf.~\cite{bouchot_2015_CSPG} and~\cite{Hansen_2012}) and since sparse tensors can be represented efficiently in a low-rank format~\cite{li_2020_sparse_tt,bachmayr_2017_sparse_vs_low-rank}.
In the following we aim to approximate this quantity of interest for two different models of the diffusion coefficient $a$.

In the first numerical example we consider the affine-parametric diffusion equation with $Y = \bracs{-1,1}^{20}$ and
\begin{equation}
    a(x,y) := 1 + \frac{6}{\pi^2} \sum_{m=1}^{20} k^{-2} \sin(\hat{\varpi}_{m} x_1) \sin(\check{\varpi}_{m} x_2) y_m,
\end{equation}
where $\hat{\varpi}_m = \pi\lfloor\frac{m}{2}\rfloor$ and $\check{\varpi}_m = \pi\lceil\frac{m}{2}\rceil$.
We assume that $\rho = \frac{1}{2}\dx[y]$ and $w\equiv 1$ and search for the best approximation with respect to $\norm{\bullet}_{L^2\pars{Y,\rho}}$.
A comparison of R\textsuperscript{2}\!ALS to other state-of-the-art algorithms for the empirical best-approximation of $U$ is provided in \cref{tbl:darcy_uniform}.
It can be seen that R\textsuperscript{2}\!ALS clearly outperforms the other algorithms in the sample-sparse regime and that this edge vanishes when the number of samples increases.
This is to be expected, since the probability of the restricted isometry property increases with the number of samples.

The second example considers the log-normal diffusion equation with $Y = \mbb{R}^{20}$ and
\begin{equation}
    a(x,y) := \exp\pars*{\frac{1}{H_{20}} \sum_{m=1}^{20} m^{-1} \sin(\hat{\varpi}_{m} x_1) \sin(\check{\varpi}_{m} x_2) y_m},
\end{equation}
where again $\hat{\varpi}_m = \pi\lfloor\frac{m}{2}\rfloor$ and $\check{\varpi}_m = \pi\lceil\frac{m}{2}\rceil$ and $H_{20} := \sum_{m=1}^{20} \frac{1}{m}$ is the $20$\textsuperscript{th} harmonic number.
We assume that $\rho$ is a multivariate standard normal distribution and search for the best approximation with respect to $\norm{\bullet}_{L^2\pars{Y,\rho}}$.
Although the theory demands the use of an adapted sampling density, we observe that the choice $w\equiv 1$ seems to work well in practice.
The results of this experiment are provided in \cref{tbl:darcy_lognormal} and provide the same conclusion as for the previous example.

\begin{table}[ht]
    \centering
    \includegraphics[width=\textwidth]{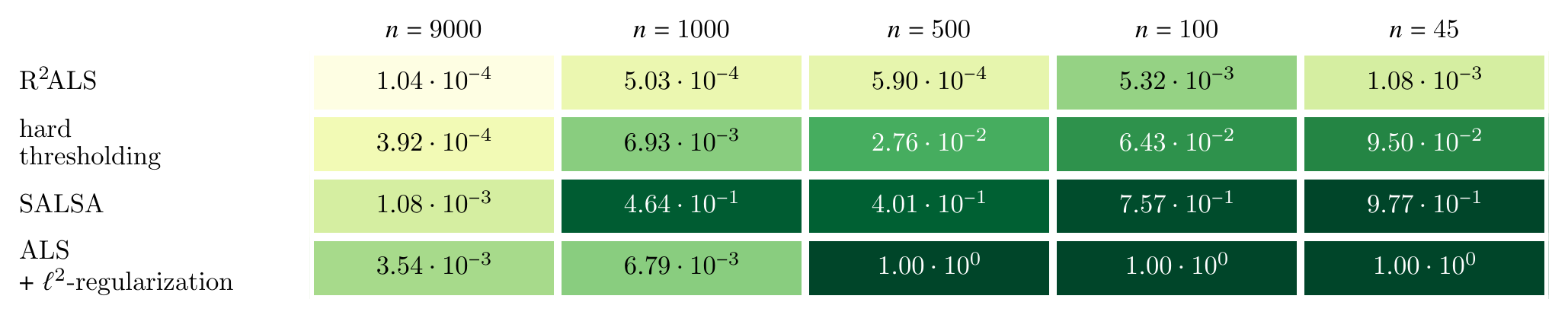}
    \caption{%
    Relative $L^2$-approximation error for the quantity of interest $U$ corresponding to the affine parametrization.
    R\textsuperscript{2}\!ALS is compared to bASD~\cite{eigel2018vmc0}, SALSA~\cite{Grasedyck2019SALSA} and to an $\ell^2$-regularized ALS, as used in~\cite{fackeldey2020approximativePolicyIteration,Kraemer2020thesis}.
    The regularization parameter for the $\ell^2$-regularized ALS is chosen by $10$-fold cross-validation.
    The relative error in the $L^2$-norm is estimated on a test set of $1000$ independent samples.
    All algorithms use the same samples to compute the empirical approximation (in each column) and the errors are always computed on the same test set.}
    \label{tbl:darcy_uniform}
\end{table}

\begin{table}[ht]
    \centering
    \includegraphics[width=\textwidth]{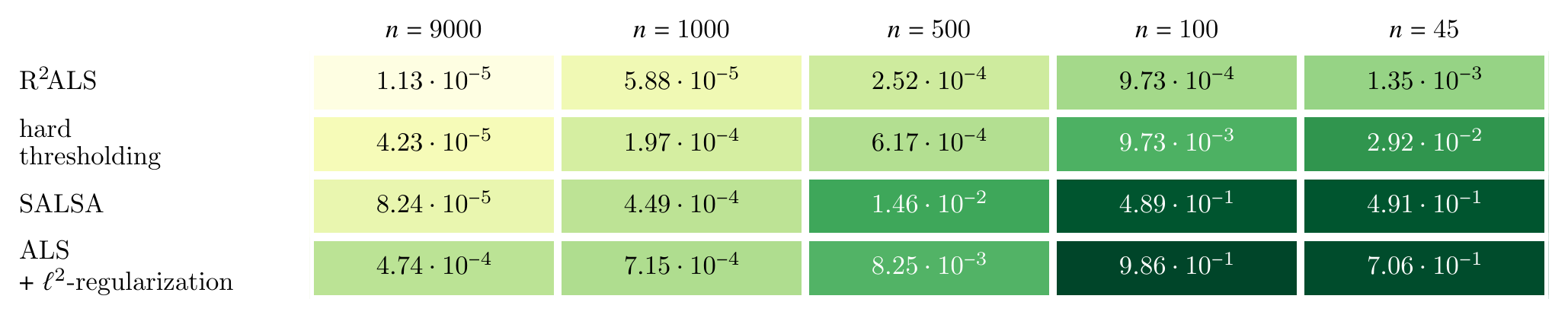}
    \caption{%
    Relative $L^2$-approximation error for the quantity of interest $U$ corresponding to the lognormal parametrization.
    R\textsuperscript{2}\!ALS is compared to bASD~\cite{eigel2018vmc0}, SALSA~\cite{Grasedyck2019SALSA} and to an $\ell^2$-regularized ALS, as used in~\cite{fackeldey2020approximativePolicyIteration,Kraemer2020thesis}.
    The regularization parameter for the $\ell^2$-regularized ALS is chosen by $10$-fold cross-validation.
    The relative error in the $L^2$-norm is estimated on a test set of $1000$ independent samples.
    All algorithms use the same samples to compute the empirical approximation (in each column) and the errors are always computed on the same test set.}
    \label{tbl:darcy_lognormal}
\end{table}

\section{Discussion}

This work extends the theory developed in~\cite{EST20}, where it was conjectured, that the worst-case sample complexity for any model class of tensor networks is of the same order of magnitude as for the ambient tensor space.
This hypothesis is confirmed and we argue, that current algorithms do not commonly display this behaviour, because they implicitly restrict the problem to a subclass on which fewer samples are required.
We investigate the validity of this heuristic argument for a wide range of model classes and discover, that it requires several assumptions, which may be hard to verify in practice.
In the context of matrix completion, we observe that one of these preconditions is related to the well-known incoherence condition.
To avoid this restriction, we propose to modify existing algorithms in such a way as to ensure a low sample complexity.
We demonstrate this by presenting two possible modifications of the alternating least squares algorithm for tensor approximation.
Both algorithms are rank-adaptive but not stable in the sense of~\cite{Grasedyck2019SALSA}, which can be attributed to the use of a cross-validated LASSO in the microsteps.
These microsteps result in a non-monotonic behaviour of the validation-set and training-set errors, which can indeed be observed during the minimization.
As of yet, there exists no proof of convergence for these algorithms.

We compare \cref{alg:rals_heuristic} to other state-of-the-art algorithms on two common benchmark problems from uncertainty quantification and observe that it drastically outperforms the others in the sample-scarce regime.
Although we expect \cref{alg:rals} to perform even better, we did not implement it due to numerical challenges and leave this as an interesting problem for a future work.
The experiments that are performed are inspired by those in~\cite{bouchot_2015_CSPG} and only consider the approximation of a quantity of interest.
However, we see no reason, why the same algorithm could not be extended to approximate the entire parametric solution.

This is not the first work that proposes the utilization of sparsity in the component tensors of a tensor network.
In~\cite{michel2021learning}, the authors consider the abstract setting of empirical risk minimization on bounded model classes of, potentially, sparse tensor networks.
They present a model selection strategy for the network topology and sparsity pattern and they derive error bounds.
In contrast to Theorems~\ref{thm:empirical_projection_error} and~\ref{thm:P_RIP}, the risk bound presented in~\cite{michel2021learning} works for arbitrary risk functions but does not guarantee an equivalence of errors.
It requires the model class to be bounded and it is not straight-forward to relate the sample complexity to a single quantity of the model class, like it is done in \cref{thm:P_RIP}.

In~\cite{Chevreuil2015sparseRank1} the authors propose an algorithm that computes the best approximation in the model class of sparse rank-$1$ tensors.
This algorithm is, conceptually, very similar to~\cref{alg:rals_heuristic}.
But since the authors delegate the choice of an appropriate basis, they can not exploit the advantages of weighted sparsity.
This means that, in the worst case, vastly more samples may be required than are actually necessary.
Contrary to our work, the authors in~\cite{Chevreuil2015sparseRank1} do not adapt the rank by adding small perturbations but by computing sparse rank-1 updated.
Although it is known, that such a sum of best approximations can lead to a suboptimal rank (cf.~\cite{Stegeman_2010} and the references therein), convergence is guaranteed by~\cite{ehrlacher_2021_greedy_tt}\todo{Check!}.
The success of multi-level methods in medical image reconstruction (cf.~\cite[Example~4.3]{EST20},~\cite{adcock_hansen_poon_roman_2017}, and~\cite{Wang20142693NS}) and parametric PDEs (cf.~\cite{ballani_2017_ML_tensor_approximation}) indicates that this may be an interesting application of our theory.
In contrast to Algorithms~\ref{alg:rals} and~\ref{alg:rals_heuristic}, greedy algorithms do not require explicit rank adaptation, which simplifies the implementation and alleviates any concerns about stability.
Moreover, since the representation of a rank-$1$ tensor is unique up to scaling factors of the coefficient tensors, both algorithms coincide.
This holds the promise to combine the conjectured performance benefits of \cref{alg:rals} with the numerical efficiency of \cref{alg:rals_heuristic} in this special case.
Finally, note that the $\norm{\bullet}_\infty$-norms of a rank-$1$ function can be estimated more easily, which may result in sharper bounds and in an improved convergence.

Block-sparse tensor networks are a well-known tool in the numerics of quantum mechanics~\cite{singh_2010_block_sparse_dmrg} and were recently introduced to the mathematics community by~\cite{bachmayr_2021_blocksparse}.
This theory is used in~\cite{goette_2021_blocksparse} to restrict the model class of tensor train networks to the subspace of homogeneous polynomials of fixed degree.
This guarantees a more moderate bound for the sample complexity. %
In contrast with this approach, where the sparsity structure has to be known in advance,
the two algorithms in \cref{sec:restricted_als} choose the sparsity implicitly and are agnostic to the chosen basis.
The downside of this is that the sparsity structure can not be interpreted as a restriction to a linear subspace of the ambient tensor space, which reduces the interpretability and increases the degrees of freedom.
It is also observed, that the introduction of an additional, virtual mode (cf.~\cite[Equation~(28)]{goette_2021_blocksparse}) is necessary to achieve block sparsity for arbitrary polynomials.
It would be interesting to investigate the effect of this construction on the theoretical bounds, developed in the present paper, and on the experimental performance of Algorithms~\ref{alg:rals} and~\ref{alg:rals_heuristic}.

During the completion of this article we came across the recent work~\cite{sancarlos2021pgdbased}, where a similar method is proposed and additional empirical evidence for its viability is provided.

\section*{Acknowledgements}

P.\ Trunschke acknowledges support by the Berlin International Graduate School in Model and Simulation based Research (BIMoS).

Our code made use of the following Python packages: numpy, scipy, and matplotlib \cite{numpy,scipy,matplotlib}.\todo{Check!}

\bibliographystyle{plainnat}
\bibliography{main}

\appendix
\section{Tensor Networks} \label{sec:tensor_networks}

This section introduces the concept of tensor networks and a graphical notation for the involved contractions related to tensor networks.
This notation drastically simplifies the expressions and makes the whole setup more approachable.
\todo{Also discuss why low-rank tensors are needed ($\dim(\mcal{V}) = d^M$...)}

\subsection{Tensors and indices}
\todo{$d$ and $k$ vs $M$ and $m$...}
\begin{definition}
    Let $d\in\mathbb{N}_{>0}$. Then $\boldsymbol{n} = (n_1,\cdots,n_d)\in\mathbb{N}^d$ is called a \emph{dimension tuple of order $d$} and $x\in\mathbb{R}^{n_1\times\cdots\times n_d} =: \mathbb{R}^{\boldsymbol{n}}$ is called a \emph{tensor of order $d$ and dimension $\boldsymbol{n}$}.
    Let $\mathbb{N}_{n} = \{1,\ldots,n\}$ then a tuple $(l_1,\ldots,l_d) \in \mathbb{N}_{n_1}\times\cdots\times\mathbb{N}_{n_d}=:\mathbb{N}_{\boldsymbol{n}}$ is called a \emph{multi-index} and the corresponding entry of $x$ is denoted by $x(l_1,\ldots,l_d)$.
    The positions $1,\ldots,d$ of the indices $l_1,\ldots,l_d$ in the expression $x(l_1,\ldots,l_d)$ are called \emph{modes of $x$}.
\end{definition} 

To define further operations on tensors it is often useful to associate each mode with a symbolic index.
\begin{definition}
	A \emph{symbolic index} $\mathrm{i}$ of dimension $n$ is a placeholder for an arbitrary but fixed natural number between $1$ and $n$.
	For a dimension tuple $\boldsymbol{n}$ of order $d$ and a tensor $x\in\mathbb{R}^{\boldsymbol{n}}$ we may write $x(\mathrm{i}_1,\ldots,\mathrm{i}_d)$ and tacitly assume that $\mathrm{i}_k$ are indices of dimension $n_k$ for each $k=1,\ldots,d$.
    When standing for itself this notation means $x(\mathrm{i}_1,\ldots,\mathrm{i}_d) = x\in\mathbb{R}^{\boldsymbol{n}}$ and may be used to \emph{slice} the tensor
    \begin{equation}
        x(\mathrm{i}_1,l_2,\ldots,l_d) \in \mathbb{R}^{n_1}
    \end{equation}
    where $l_k\in\mathbb{N}_{n_k}$ are fixed indices for all $k=2,\ldots,d$.
    For any dimension tuple $\boldsymbol{n}$ of order $d$ we define the symbolic multi-index $\mathrm{i}^{\boldsymbol{n}} = (\mathrm{i}_1, \ldots, \mathrm{i}_d)$ of dimension $\boldsymbol{n}$ where $\mathrm{i}_k$ is a symbolic index of dimension $n_k$ for all $k=1,\ldots,d$.
\end{definition}

\begin{remark}
	We use roman font letters (with appropriate subscripts) for symbolic indices while reserving standard letters for ordinary indices.
\end{remark}

\begin{example}
	Let $x$ be an order $2$ tensor with mode dimensions $n_1$ and $n_2$, i.e.\ an $n_1$-by-$n_2$ matrix.
	Then $x(l_1,\mathrm{j})$ denotes the $l_1$-th row of $x$ and $x(\mathrm{i},l_2)$ denotes the $l_2$-th column of $x$.
\end{example}

Inspired by Einstein notation we use the concept of symbolic indices to define different operations on tensors.
\begin{definition}\label{def:indexproduct}
	Let $i_1$ and $i_2$ be (symbolic) indices of dimension $n_1$ and $n_2$, respectively and let $\varphi$ be a bijection
	\begin{equation}
	     \varphi : \mathbb{N}_{n_1}\times \mathbb{N}_{n_2} \rightarrow \mathbb{N}_{n_1n_2}.
	\end{equation}
	We then define \textit{the product of indices} with respect to $\varphi$ as $\mathrm{j}=\varphi(\mathrm{i}_1,\mathrm{i}_2)$ where $\mathrm{j}$ is a (symbolic) index of dimension $n_1n_2$.
	In most cases the choice of bijection is not important and we will write $\mathrm{i}_1\cdot \mathrm{i}_2 := \varphi(\mathrm{i}_1,\mathrm{i}_2)$ for an arbitrary but fixed bijection $\varphi$.
	For a tensor $x$ of dimension $(n_1,n_2)$ the expression
	\begin{equation}
	    y(\mathrm{i}_1\cdot \mathrm{i}_2) = x(\mathrm{i}_1, \mathrm{i}_2)
	\end{equation}
	defines the tensor $y$ of dimension $n_1n_2$ while the expression
	\begin{equation}
	    x(\mathrm{i}_1, \mathrm{i}_2) =  y(\mathrm{i}_1\cdot \mathrm{i}_2)
	\end{equation}
	defines $x\in\mathbb{R}^{n_1\times n_2}$ from $y\in\mathbb{R}^{n_1n_2}$.
\end{definition}

\begin{definition}\label{def:productlike}
    Consider the tensors $x\in\mathbb{R}^{\boldsymbol{n}_1\times a\times\boldsymbol{n}_2}$ and $y\in\mathbb{R}^{\boldsymbol{n}_3\times b\times\boldsymbol{n}_4}$.
    Then the expression
    \begin{equation} \label{eq:outer}
        z(\mathrm{i}^{\boldsymbol{n_1}},\mathrm{i}^{\boldsymbol{n_2}},\mathrm{j}_1,\mathrm{j}_2,\mathrm{i}^{\boldsymbol{n_3}},\mathrm{i}^{\boldsymbol{n_4}}) = x(\mathrm{i}^{\boldsymbol{n_1}},\mathrm{j}_1,\mathrm{i}^{\boldsymbol{n_2}})\cdot y(\mathrm{i}^{\boldsymbol{n_3}},\mathrm{j}_2,\mathrm{i}^{\boldsymbol{n_4}})
    \end{equation}
    defines the tensor $z\in\mathbb{R}^{\boldsymbol{n}_1\times \boldsymbol{n}_2\times a\times b\times\boldsymbol{n}_3\times\boldsymbol{n}_4}$ in the obvious way.
    Similarly, for $a=b$ the expression 
    \begin{equation} \label{eq:hadamard}
        z(\mathrm{i}^{\boldsymbol{n_1}},\mathrm{i}^{\boldsymbol{n_2}},\mathrm{j},\mathrm{i}^{\boldsymbol{n_3}},\mathrm{i}^{\boldsymbol{n_4}}) = x(\mathrm{i}^{\boldsymbol{n_1}},\mathrm{j},\mathrm{i}^{\boldsymbol{n_2}})\cdot y(\mathrm{i}^{\boldsymbol{n_3}},\mathrm{j},\mathrm{i}^{\boldsymbol{n_4}})
    \end{equation}
    defines the tensor $z\in\mathbb{R}^{\boldsymbol{n}_1\times \boldsymbol{n}_2\times a\times\boldsymbol{n}_3\times\boldsymbol{n}_4}$.
    Finally, also for $a=b$ the expression
    \begin{equation} \label{eq:inner}
        z(\mathrm{i}^{\boldsymbol{n_1}},\mathrm{i}^{\boldsymbol{n_2}},\mathrm{i}^{\boldsymbol{n_3}},\mathrm{i}^{\boldsymbol{n_4}}) = x(\mathrm{i}^{\boldsymbol{n_1}},\mathrm{j},\mathrm{i}^{\boldsymbol{n_2}})\cdot y(\mathrm{i}^{\boldsymbol{n_3}},\mathrm{j},\mathrm{i}^{\boldsymbol{n_4}})
    \end{equation}
    defines the tensor $z\in\mathbb{R}^{\boldsymbol{n}_1\times \boldsymbol{n}_2\times \boldsymbol{n}_3\times\boldsymbol{n}_4}$ as
    \begin{equation}
        z(\mathrm{i}^{\boldsymbol{n_1}},\mathrm{i}^{\boldsymbol{n_2}},\mathrm{i}^{\boldsymbol{n_3}},\mathrm{i}^{\boldsymbol{n_4}}) = \sum_{k=1}^a x(\mathrm{i}^{\boldsymbol{n_1}},k,\mathrm{i}^{\boldsymbol{n_2}})\cdot y(\mathrm{i}^{\boldsymbol{n_3}},k,\mathrm{i}^{\boldsymbol{n_4}}) .
    \end{equation}
\end{definition}
We choose this description mainly because of its simplicity and how it relates to the implementation of these operations in the numeric libraries \texttt{numpy}~\cite{numpy} and \texttt{xerus}~\cite{xerus}.

\subsection{Graphical notation and tensor networks}
This section will introduce the concept of \textit{tensor networks}~\cite{espig_2011_tensor_networks} and a graphical notation for certain operations which will simplify working with these structures.
To this end we reformulate the operations introduced in the last section in terms of nodes, edges and half edges.
\begin{definition}
    For a dimension tuple $\boldsymbol{n}$ of order $d$ and a tensor $x\in\mathbb{R}^{\boldsymbol{n}}$ the \textit{graphical representation} of $x$ is given by
    \begin{center}
    	\begin{tikzpicture}
    	\draw[black,fill=black] (0,0) circle (0.5ex);
    	\node[anchor=south east] at (0,0) {$x$};
    	\draw(-1,0)--(1,0);
    	\draw(0,1)--(0,-1);
    	\node[anchor=south] at (-1,0) {$\mathrm{i}_1$};
    	\node[anchor=west] at (0,1) {$\mathrm{i}_2$};	\node[anchor=north] at (1,0) {$\mathrm{i}_3$};
    	\node at (0.5,-0.5) {\reflectbox{$\ddots$}};
    	\node[anchor=east] at (0,-1) {$\mathrm{i}_d$};
    	\end{tikzpicture}
    \end{center}
    where the node represents the tensor and the half edges represent the $d$ different modes of the tensor illustrated by the symbolic indices $\mathrm{i}_1,\ldots,\mathrm{i}_d$.
\end{definition}

\begin{example}
    The presented graphical representation, allows us to write scalars, vectors, matrices and tensors of order $5$ in an easily understandable fashion:
    \begin{center}
    \begin{tabular}{ c c c c }
        \begin{tikzpicture}
            \node[core, label={[anchor=north, label distance=6]below:$s\in\mbb{R}$}] (s) at (-1.5,0) {};
            \Core{(s)}
            \node [above=1.25, align=flush center,text width=6em] at (s) {scalar};
        \end{tikzpicture}
        &
        \begin{tikzpicture}
            \node (x) at (0,0.75) {$d_1$};
            \node[core, label={[anchor=north, label distance=3]below:$v\in\mbb{R}^{d_1}$}] (v) at (0,0) {};
            \draw[contraction] (x) -- (v);
            \Core{(v)}
            \node [above=1.25, align=flush center,text width=6em] at (v) {vector};
        \end{tikzpicture}
        &
        \begin{tikzpicture}
            \node (x1) at (1.25,0.75) {$d_1$};
            \node (x2) at (2.75,0.75) {$d_2$};
            \node[core, label={[anchor=north, label distance=3]below:$M\in\mbb{R}^{d_1\times d_2}$}] (M)  at (2,0) {};
            \draw[contraction] (x1) -- (M) -- (x2);
            \Core{(M)}
            \node [above=1.25, align=flush center,text width=6em] at (M) {matrix};
        \end{tikzpicture}
        &
        \begin{tikzpicture}
            \node (x1) at (4.25,0)      {$d_1$};
            \node (x2) at (4.375,0.625) {$d_2$};
            \node (x3) at (5,0.75)      {$d_3$};
            \node (x4) at (5.625,0.625) {$d_4$};
            \node (x5) at (5.75,0)      {$d_5$};
            \node[core, label={[anchor=north, label distance=3]below:$T\in\mbb{R}^{d_1\times\cdots\times d_5}$}] (T) at (5,0) {};
            \draw[contraction] (x1) -- (T);
            \draw[contraction] (x2) -- (T);
            \draw[contraction] (x3) -- (T);
            \draw[contraction] (x4) -- (T);
            \draw[contraction] (x5) -- (T);
            \Core{(T)}
            \node [above=1.25, align=flush center,text width=6em] at (T) {tensor};
        \end{tikzpicture}
    \end{tabular}
    \end{center}
\end{example}

With this definition we can write the reshapings of Defintion~\ref{def:indexproduct} simply as
\begin{center}
	\begin{tikzpicture}
	\node[anchor=east] at (-1,0) {$x(\mathrm{i}_1, \mathrm{i}_2\cdot \mathrm{i}_3\cdots \mathrm{i}_d)\quad=\quad$};
	\draw[black,fill=black] (0,0) circle (0.5ex);
	\node[anchor=south, inner sep=5] at (0,0) {$x$};
	\draw(-1,0)--(1,0);
	\node[anchor=south] at (-1,0) {$\mathrm{i}_1$};
	\node[anchor=south west] at (1,0) {$\hspace{-0.5em}\mathrm{i}_2\cdot \mathrm{i}_3\cdots \mathrm{i}_d$};
	\end{tikzpicture}
\end{center}
and also simplify the binary operations of Definition~\ref{def:productlike}.
\begin{definition}
    Let $x\in\mathbb{R}^{\boldsymbol{n}_1\times a\times \boldsymbol{n}_2}$ and $y\in\mathbb{R}^{\boldsymbol{n}_3\times b\times \boldsymbol{n}_4}$ be two tensors.
    Then Operation~\eqref{eq:outer} is represented by
    \begin{center}
    	\begin{tikzpicture}
    	\draw[black,fill=black] (0,0) circle (0.5ex);
    	\node[anchor=south east] at (0,0) {$x$};
    	\draw(-1,0)--(0,0);
    	\draw(0,1)--(0,-1);
    	\node[anchor=south] at (-1,0) {$\mathrm{i}$};
    	\node[anchor=south] at (0,1) {{$\mathrm{i}^{\boldsymbol{n_1}}$}};	
    	\node[anchor=north] at (0,-1) {{$\mathrm{i}^{\boldsymbol{n_2}}$}};
    	
    	\draw[black,fill=black] (2,0) circle (0.5ex);
    	\node[anchor=south west] at (2,0) {$y$};
    	\draw(2,0)--(3,0);
    	\draw(2,1)--(2,-1);
    	\node[anchor=south] at (3,0) {$\mathrm{j}$};
    	\node[anchor=south] at (2,1) {$\mathrm{i}^{\boldsymbol{n_3}}$};	
    	\node[anchor=north] at (2,-1) {$\mathrm{i}^{\boldsymbol{n_4}}$};
    	
    	\node at (4,0) {$=$};
    	
    	\draw[black,fill=black] (6,0) circle (0.5ex);
    	\node[anchor=south east] at (6,0) {$z$};
    	\draw(5,0)--(7,0);
    	\draw(6,1)--(6,-1);
    	\node[anchor=south] at (5,0) {$\mathrm{i}$};
    	\node[anchor=south] at (7,0) {$\mathrm{j}$};
    	\node[anchor=south] at (6,1) {{$\mathrm{i}^{\boldsymbol{n_1}}\cdot \mathrm{i}^{\boldsymbol{n_3}}$}};
    	\node[anchor=north] at (6,-1) {{$\mathrm{i}^{\boldsymbol{n_2}}\cdot \mathrm{i}^{\boldsymbol{n_4}}$}};
    	
    	\node[anchor=north] at (8,0) {.};
    	
    	\end{tikzpicture}
    \end{center}
    and defines $z\in\mathbb{R}^{\cdots\times a \times b \times\cdots}$.
    For $a=b$ Operation~\eqref{eq:hadamard} is represented by 
    \begin{center}
    	\begin{tikzpicture}
    	\draw[black,fill=black] (0,0) circle (0.5ex);
    	\node[anchor=south east] at (0,0) {$x$};
    	\draw(1,0.5)--(0,0);
    	\draw(0,1)--(0,-1);
    
        \node[anchor=south] at (0,1) {{$\mathrm{i}^{\boldsymbol{n_1}}$}};	
    	\node[anchor=north] at (0,-1) {{$\mathrm{i}^{\boldsymbol{n_2}}$}};
    	\node at (0.5,0) {$\mathrm{i}$};
    	\draw[black,fill=black] (2,0) circle (0.5ex);
    	\node[anchor=south west] at (2,0) {$y$};
    	\draw(2,0)--(1,0.5);
    	\draw(2,1)--(2,-1);
    	\node[anchor=south] at (2,1) {$\mathrm{i}^{\boldsymbol{n_3}}$};	
    	\node[anchor=north] at (2,-1) {$\mathrm{i}^{\boldsymbol{n_4}}$};
    	
    	\node at (1.5,0) {$\mathrm{i}$};
    	\draw(1,0.5)--(1,1);
    	
    	\node[anchor=south east] at (1,0.5) {$\mathrm{i}$};
    	
    	\node at (4,0) {$=$};
    	
    	\draw[black,fill=black] (6,0) circle (0.5ex);
    	\node[anchor=south east] at (6,0) {$z$};
    	\draw(5,0)--(6,0);
    	\draw(6,1)--(6,-1);
    	\node[anchor=south] at (5,0) {$\mathrm{i}$};
    
    	\node[anchor=south] at (6,1) {{$\mathrm{i}^{\boldsymbol{n_1}}\cdot \mathrm{i}^{\boldsymbol{n_3}}$}};
    	\node[anchor=north] at (6,-1) {{$\mathrm{i}^{\boldsymbol{n_2}}\cdot \mathrm{i}^{\boldsymbol{n_4}}$}};

    	\node[anchor=north] at (8,0) {.};
    	
    	\end{tikzpicture}
    \end{center}		
    and defines $z\in\mathbb{R}^{\cdots\times a  \times \cdots}$ and Operation~\eqref{eq:inner} defines $z\in\mathbb{R}^{\cdots \times \cdots}$ by
    \begin{center}
    	\begin{tikzpicture}
    	\draw[black,fill=black] (0,0) circle (0.5ex);
    	\node[anchor=south east] at (0,0) {$x$};
    	\draw(1,0)--(0,0);
    	\draw(0,1)--(0,-1);
    	
    	\node[anchor=south] at (0,1) {{$\mathrm{i}^{\boldsymbol{n_1}}$}};	
    	\node[anchor=north] at (0,-1) {{$\mathrm{i}^{\boldsymbol{n_2}}$}};
    	\node[anchor=north] at (0.5,0) {$\mathrm{i}$};
    	\draw[black,fill=black] (2,0) circle (0.5ex);
    	\node[anchor=south west] at (2,0) {$y$};
    	\draw(2,0)--(1,0);
    	\draw(2,1)--(2,-1);
    	\node[anchor=south] at (2,1) {$\mathrm{i}^{\boldsymbol{n_3}}$};	
    	\node[anchor=north] at (2,-1) {$\mathrm{i}^{\boldsymbol{n_4}}$};
    	
    	\node[anchor=north] at (1.5,0) {$\mathrm{i}$};
    	
    	\node at (4,0) {$=$};
    	
    	\draw[black,fill=black] (6,0) circle (0.5ex);
    	\node[anchor=south east] at (6,0) {$z$};
    	\draw(6,1)--(6,-1);
    	
    	\node[anchor=south] at (6,1) {{$\mathrm{i}^{\boldsymbol{n_1}}\cdot \mathrm{i}^{\boldsymbol{n_3}}$}};
    	\node[anchor=north] at (6,-1) {{$\mathrm{i}^{\boldsymbol{n_2}}\cdot \mathrm{i}^{\boldsymbol{n_4}}$}};

    	\node[anchor=north] at (8,0) {.};
    	
    	\end{tikzpicture}
    \end{center}
\end{definition}
With these definitions we can compose entire networks of multiple tensors which are called tensor networks.

\subsection{The Tensor Train Format}\label{subsec:tensortrains}

A prominent example of a tensor network is the \textit{tensor train (TT)}~\cite{oseledets_2011_tensor_trains,holtz_alternating_2012}, which is the main tensor network used throughout this work.
This network is discussed in the following subsection.
\begin{definition}
    Let $\boldsymbol{n}$ be an dimensional tuple of order-$d$.
    The TT format decomposes an order $d$ tensor $x\in\mathbb{R}^{\boldsymbol{n}}$ into $d$ \emph{component tensors} $x_k\in\mathbb{R}^{r_{k-1}\times n_k\times r_k}$ for $k=1,\ldots,d$ with $r_0 = r_d = 1$.
    This can be written in tensor network formula notation as 
    \begin{equation}
        x(\mathrm{i}_1,\cdots,\mathrm{i}_d) = x_1(\mathrm{i}_1,\mathrm{\mathrm{j}}_1)\cdot x_2(\mathrm{\mathrm{j}}_1,\mathrm{i}_2,\mathrm{\mathrm{j}}_2)\cdots x_d(\mathrm{\mathrm{j}}_{d-1},\mathrm{i}_d).
    \end{equation}
    The tuple $(r_1,\ldots,r_{d-1})$ is called the \emph{representation rank} of this representation.
\end{definition}

In graphical notation it looks like this
\begin{center}
	\begin{tikzpicture}
	\draw[black,fill=black] (0,0) circle (0.5ex);
	\node[anchor=south east] at (0,0) {$x$};
	\draw(-1,0)--(1,0);
	\draw(0,1)--(0,-1);
	\node[anchor=south] at (-1,0) {$\mathrm{i}_1$};
	\node[anchor=west] at (0,1) {$\mathrm{i}_2$};	\node[anchor=north] at (1,0) {$\mathrm{i}_3$};
	\node at (0.5,-0.5) {\reflectbox{$\ddots$}};
	\node[anchor=east] at (0,-1) {$\mathrm{i}_d$};
	
	\node[anchor=east] at (2,0) {$=$};

	\draw[black,fill=black] (3,0) circle (0.5ex);	
	\draw[black,fill=black] (4,0) circle (0.5ex);	
	\draw[black,fill=black] (5,0) circle (0.5ex);
	\node at (6,0) {\reflectbox{$\cdots$}};	
	\draw[black,fill=black] (7,0) circle (0.5ex);
	
	\draw(3,0)--(5.5,0);
	\draw(6.5,0)--(7,0);
	\draw(3,0)--(3,-0.75);
	\draw(4,0)--(4,-0.75);
	\draw(5,0)--(5,-0.75);
	\draw(7,0)--(7,-0.75);	
	
	\node[anchor=south] at (3,0) {$x_1$};
	\node[anchor=south] at (4,0) {$x_2$};	\node[anchor=south] at (5,0) {$x_3$};
	\node[anchor=south] at (7,0) {$x_d$};
	\node[anchor=north] at (3,-0.75) {$\mathrm{i}_1$};
	\node[anchor=north] at (4,-0.75) {$\mathrm{i}_2$};	\node[anchor=north] at (5,-0.75) {$\mathrm{i}_3$};
	\node[anchor=north] at (7,-0.75) {$\mathrm{i}_d$};
	\node[anchor=north] at (3.5,0) {$\mathrm{j}_1$};
	\node[anchor=north] at (4.5,0) {$\mathrm{j}_2$};	\node[anchor=north] at (5.5,0) {$\mathrm{j}_3$};
	\node[anchor=north] at (6.5,0) {$\mathrm{j}_{d-1}$};
	\end{tikzpicture}
\end{center}

\begin{remark}\label{rmk:TT_representation}
    Note that this representation is not unique.
    For any pair of matrices $(A,B)$ that satisfies $AB=\operatorname{Id}$ we can replace $x_k$ by $x_k(\mathrm{i}_1,\mathrm{i}_2,\mathrm{j})\cdot A(\mathrm{j},\mathrm{i}_3)$ and $x_{k+1}$ by $B(\mathrm{i}_1,\mathrm{j})\cdot x(\mathrm{j},\mathrm{i}_2,\mathrm{i}_3)$ without changing the tensor $x$.
\end{remark}

The representation rank of $x$ is therefore dependent on the specific representation of $x$ as a TT, hence the name.
Analogous to the concept of matrix rank we can define a minimal necessary rank that is required to represent a tensor $x$ in the TT format.
\begin{definition} 
	The \textit{tensor train rank} of a tensor $x\in\mathbb{R}^{\boldsymbol{n}}$ with tensor train components $x_1\in\mathbb{R}^{n_1\times r_1}$, $x_k\in\mathbb{R}^{r_{k-1}\times n_k\times r_k}$ for $k=2,\ldots,d-1$ and $x_d\in\mathbb{R}^{r_{d-1}\times n_d}$ is the set  
	\[
    	 \text{TT-rank}(x) = (r_1,\cdots,r_d)
	\]
	of minimal $r_k$'s such that the $x_k$ compose $x$.
\end{definition}
In \cite[Theorem 1a]{holtz_2012_manifolds} it is shown that the TT-rank can be computed by simple matrix operations.
Namely, $r_k$ can be computed by joining the first $k$ indices and the remaining $d-k$ indices and computing the rank of the resulting matrix.

At last, we need to introduce the concept of left and right orthogonality for the tensor train format. 
\begin{definition}
    Let $x\in\mathbb{R}^{\boldsymbol{m}\times n}$ be a tensor of order $d+1$.
    We call $x$ \emph{left orthogonal} if
    \[
        x(\mathrm{i}^{\boldsymbol{m}},\mathrm{j}_1) \cdot x(\mathrm{i}^{\boldsymbol{m}},\mathrm{j}_2)
        = \operatorname{Id}(\mathrm{j}_1,\mathrm{j}_2) .
    \]
    Similarly, we call a tensor $x\in\mathbb{R}^{m\times\boldsymbol{n}}$ of order $d+1$ \emph{right orthogonal} if 
    \[
        x(\mathrm{i}_1, \mathrm{j}^{\boldsymbol{n}}) \cdot x(\mathrm{i}_2, \mathrm{j}^{\boldsymbol{n}})
        = \operatorname{Id}(\mathrm{i}_1,\mathrm{i}_2) .
    \]
    A tensor train is \emph{left orthogonal} if all component tensors $x_1,\ldots,x_{d-1}$ are left orthogonal.
    It is \emph{right orthogonal} if all component tensors $x_2,\ldots,x_d$ are right orthogonal.
\end{definition}

\begin{lemma}[\cite{oseledets_2011_tensor_trains}]
	For every tensor $x\in\mathbb{R}^{\boldsymbol{n}}$ of order $d$ we can find left and right orthogonal decompositions. 
\end{lemma}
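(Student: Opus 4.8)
The plan is to construct both decompositions by successive QR factorisations, which is the standard orthogonalisation procedure for tensor trains. By the definition of the TT format (and the minimal-rank statement of \cite{holtz_2012_manifolds}) we may start from \emph{any} representation $x(\mathrm{i}_1,\ldots,\mathrm{i}_d) = x_1(\mathrm{i}_1,\mathrm{j}_1)\cdot x_2(\mathrm{j}_1,\mathrm{i}_2,\mathrm{j}_2)\cdots x_d(\mathrm{j}_{d-1},\mathrm{i}_d)$ and transform it in place.

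For the left-orthogonal decomposition I would sweep $k = 1,\ldots,d-1$ from left to right. At step $k$, reshape the current component $x_k\in\mathbb{R}^{r_{k-1}\times n_k\times r_k}$ into a matrix $X_k\in\mathbb{R}^{(r_{k-1}n_k)\times r_k}$ by joining its first two modes (the incoming bond index and the physical index), and compute a thin QR factorisation $X_k = Q_k R_k$ with $Q_k^\intercal Q_k = \operatorname{Id}$ and $R_k\in\mathbb{R}^{r_k\times r_k}$. I would then replace $x_k$ by the tensor obtained from reshaping $Q_k$ back to dimensions $r_{k-1}\times n_k\times r_k$, and absorb $R_k$ into the next component via the contraction $x_{k+1}\leftarrow R_k(\mathrm{j}_k,\mathrm{l})\cdot x_{k+1}(\mathrm{l},\mathrm{i}_{k+1},\mathrm{j}_{k+1})$ along the shared bond. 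Two things must then be checked. First, the represented tensor is unchanged: since the contraction of $x_k$ and $x_{k+1}$ over $\mathrm{j}_k$ realises precisely the matrix product, and $Q_k R_k = X_k$, we have $Q_k\cdot(R_k x_{k+1}) = X_k\cdot x_{k+1}$, so this is nothing but the gauge freedom of \cref{rmk:TT_representation}. Second, the relation $Q_k^\intercal Q_k = \operatorname{Id}$ is, after undoing the reshaping, exactly the left-orthogonality condition $x_k(\mathrm{i}^{\boldsymbol{m}},\mathrm{j}_1)\cdot x_k(\mathrm{i}^{\boldsymbol{m}},\mathrm{j}_2)=\operatorname{Id}(\mathrm{j}_1,\mathrm{j}_2)$ with $\mathrm{i}^{\boldsymbol{m}}$ the joint index of the first two modes. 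Since a component is not touched again once it has been orthogonalised, after the sweep the components $x_1,\ldots,x_{d-1}$ are all left orthogonal while $x_d$ has absorbed the trailing $R_{d-1}$; by definition this is a left-orthogonal TT.

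The right-orthogonal decomposition is obtained by the mirror-image sweep $k = d,\ldots,2$ from right to left: reshape $x_k$ into $\mathbb{R}^{r_{k-1}\times(n_k r_k)}$ by joining its last two modes, take an LQ factorisation $X_k = L_k Q_k$ (equivalently a thin QR of $X_k^\intercal$) with $Q_k Q_k^\intercal = \operatorname{Id}$, keep the reshaping of $Q_k$ as the new $x_k$, and push $L_k$ into $x_{k-1}$. The verification is identical up to transposition and yields the right-orthogonality relation $x_k(\mathrm{i}_1,\mathrm{j}^{\boldsymbol{n}})\cdot x_k(\mathrm{i}_2,\mathrm{j}^{\boldsymbol{n}})=\operatorname{Id}(\mathrm{i}_1,\mathrm{i}_2)$ for $k = 2,\ldots,d$.

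There is no serious analytic obstacle here; the content is entirely linear-algebraic and rests on the existence of the QR factorisation. The only point requiring care is bookkeeping: one must match the index-contraction definitions of left and right orthogonality to the matrix statements that $Q$ has orthonormal columns, respectively rows, and confirm that joining exactly the correct pair of modes before factorising reproduces the contraction realising the TT product. I would also note that with plain QR the bond dimensions are unchanged, while a rank-revealing factorisation can only decrease them, down to the true TT-rank of \cite{holtz_2012_manifolds}; in either case the procedure stays within the TT format.
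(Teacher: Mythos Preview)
Your argument is correct and is exactly the standard QR/LQ sweep from \cite{oseledets_2011_tensor_trains}; the paper does not spell out a proof at all but simply cites that reference, so there is nothing further to compare. One small bookkeeping point: your claim that ``with plain QR the bond dimensions are unchanged'' tacitly assumes $r_{k-1}n_k\ge r_k$; in the (degenerate) case $r_{k-1}n_k<r_k$ the thin factorisation already shrinks the bond to $\min(r_{k-1}n_k,r_k)$, but this only helps and does not affect the validity of the construction.
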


\section{Proof of Theorem~\ref{thm:K_properties}} \label{proof:thm:K_properties}

\begin{enumerate}
    \item Follows directly from the definition.
    
    \item To see that $\mfrak{K}_{A} = \mfrak{K}_{\cl\pars{A}}$ let $a\in\cl\pars{A}\setminus\braces{0}$.
    Then there exists a sequence $\braces{a_k} \in A\setminus\braces{0}$ such that $a_k \to a$.
    Due to the continuity of $a \mapsto a\pars{y}^2/\norm{\bullet}^2$ on $A\setminus\braces{0}$ it follows that
    \begin{equation}
        \mfrak{K}_{\braces{a}}\pars{y}
        = \frac{\abs{a\pars{y}}^2}{\norm{a}^2}
        = \lim_{k\to\infty} \frac{\abs{a_k\pars{y}}^2}{\norm{a_k}^2}
        = \lim_{k\to\infty} \mfrak{K}_{\braces{a_k}}\pars{y}
    \end{equation}
    And since $\mfrak{K}_{\braces{a_k}} \le \mfrak{K}_{A}$ for all $k=1,\ldots,\infty$ and we can conclude $\mfrak{K}_{\braces{a}} \le \mfrak{K}_{A}$.
    The assertion follows with \ref{thm:K_properties:union} since $\mfrak{K}_{A} \le \mfrak{K}_{\cl\pars{A}} = \sup_{a\in\cl\pars{A}}\mfrak{K}_{\braces{a}} \le \mfrak{K}_{A}$.

    \item[3.-5.] In all three case we can write $\mfrak{K}_{\bullet} = \operatorname{sqr}\circ \sup\circ \operatorname{abs}\circ\, U$ with
    \begin{align}
        \operatorname{sqr} &: \mcal{V}_{w,\infty} \to \mcal{V}_{w^2,\infty},
        & \operatorname{sqr}\pars{v}\pars{y} &:= v\pars{y}^2, \label{eq:sqr} \\
        \operatorname{abs} &: \mcal{V}_{w,\infty} \to \mcal{V}_{w,\infty},
        & \operatorname{abs}\pars{v}\pars{y} &:= \abs{v\pars{y}}, \label{eq:abs} \\
        \sup &: \mfrak{P}\pars{\mcal{V}_{w,\infty}} \to \mcal{V}_{w,\infty},
        & \sup\pars{V}\pars{y} &:= \sup_{v\in V} v\pars{y}, \text{ and} \label{eq:sup} \\
        \inf &: \mfrak{P}\pars{\mcal{V}_{w,\infty}} \to \mcal{V}_{w,\infty},
        & \inf\pars{V}\pars{y} &:= \inf_{v\in V} v\pars{y} .
    \end{align}
    This allows us to prove the continuity of $\operatorname{sqr}\circ \sup\circ \operatorname{abs} : \mfrak{P}\pars{\mcal{V}_{w,\infty}} \to \mcal{V}_{w^2,\infty}$ and $U$ individually.
    The main difference between \crefrange{thm:K_properties:continuity:compact_sets}{thm:K_properties:continuity:cones} then comes from the domain of $U$.
    
    We proceed by showing that $\operatorname{sqr}\circ \sup\circ \operatorname{abs} : \mfrak{P}\pars{\mcal{V}_{w,\infty}} \to \mcal{V}_{w^2,\infty}$ is continuous with respect to the Hausdorff pseudometric, which also implies the continuity of $\operatorname{sqr}\circ \sup\circ \operatorname{abs} : \mfrak{C}\pars{\mcal{V}_{w,\infty}} \to \mcal{V}_{w^2,\infty}$ with respect to the Hausdorff metric.
    
    To do this we require the following four lemmata.
    \begin{lemma} \label{lem:continuity_implies_pseudo-Hausdorff_continuity}
        Let $\pars{M_1,d_1}$ and $\pars{M_2,d_2}$ be metric spaces, let $f:M_1\to M_2$ and define $f\pars{X} := \braces{ f\pars{x} : x\in X}$ for any $X\in M_1$.
        If $f$ is uniformly continuous, then $f : \mfrak{P}\pars{M_1}\to\mfrak{P}\pars{M_2}$ is uniformly continuous with respect to the Hausdorff pseudometric.
    \end{lemma}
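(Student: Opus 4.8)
The plan is to verify the $\varepsilon$–$\delta$ criterion for uniform continuity directly, after unwinding the definition of the Hausdorff pseudometric. Write $d_{1,\mathrm{H}}$ and $d_{2,\mathrm{H}}$ for the Hausdorff pseudometrics induced by $d_1$ and $d_2$ on $\mfrak{P}\pars{M_1}$ and $\mfrak{P}\pars{M_2}$, respectively. First I would fix $\varepsilon>0$ and invoke the uniform continuity of $f$ to produce $\delta>0$ such that $d_1\pars{x,x'}<\delta$ implies $d_2\pars{f\pars{x},f\pars{x'}}\le\varepsilon$ for all $x,x'\in M_1$. The claim is then that this \emph{same} $\delta$ witnesses uniform continuity of the induced map, i.e.\ that $d_{1,\mathrm{H}}\pars{X,Y}<\delta$ forces $d_{2,\mathrm{H}}\pars{f\pars{X},f\pars{Y}}\le\varepsilon$ for all $X,Y\in\mfrak{P}\pars{M_1}$.

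The heart of the argument is to bound one of the two symmetric terms defining $d_{2,\mathrm{H}}$. Since $f\pars{X}=\braces{f\pars{x}:x\in X}$, for any function $h$ one has $\sup_{p\in f\pars{X}}h\pars{p}=\sup_{x\in X}h\pars{f\pars{x}}$ and $\inf_{q\in f\pars{Y}}d_2\pars{p,q}=\inf_{y\in Y}d_2\pars{p,f\pars{y}}$, so the first term equals $\sup_{x\in X}\inf_{y\in Y}d_2\pars{f\pars{x},f\pars{y}}$. Fix $x\in X$. From $d_{1,\mathrm{H}}\pars{X,Y}<\delta$ we get $\inf_{y\in Y}d_1\pars{x,y}\le d_{1,\mathrm{H}}\pars{X,Y}<\delta$, and because this infimum is \emph{strictly} below $\delta$ there exists an honest point $y\in Y$ with $d_1\pars{x,y}<\delta$; the choice of $\delta$ then yields $d_2\pars{f\pars{x},f\pars{y}}\le\varepsilon$, whence $\inf_{y'\in Y}d_2\pars{f\pars{x},f\pars{y'}}\le\varepsilon$. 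Taking the supremum over $x\in X$ bounds the first term by $\varepsilon$, and exchanging the roles of $X$ and $Y$ bounds the second term the same way. As $d_{2,\mathrm{H}}\pars{f\pars{X},f\pars{Y}}$ is the maximum of these two, it is at most $\varepsilon$, which is exactly the required implication.

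I do not expect a serious obstacle here; the only points needing care are bookkeeping. The first is that infima need not be attained, so I must pass from ``$\inf<\delta$'' to an \emph{existing} witness $y$ with $d_1\pars{x,y}<\delta$ — this is precisely where the strictness of the input inequality is used, and it is the reason I keep the input as a strict bound $<\delta$ while the output is the non-strict $\le\varepsilon$. The second is the trivial edge case in which $X$ or $Y$ is empty, which one disposes of by the usual conventions (or by noting that the sets arising in the applications of this lemma are nonempty). Neither interferes with the main estimate, so the lemma follows.
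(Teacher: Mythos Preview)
Your argument is correct and follows essentially the same route as the paper: fix $\varepsilon>0$, use uniform continuity of $f$ to obtain $\delta>0$, then for $d_{1,\mathrm{H}}\pars{X,Y}<\delta$ and each $x\in X$ extract a witness $y\in Y$ with $d_1\pars{x,y}<\delta$ to bound $\inf_{y'}d_2\pars{f\pars{x},f\pars{y'}}$, and finish by symmetry. If anything you are slightly more careful than the paper about the strict-versus-non-strict bookkeeping needed to pass from the infimum condition to an actual witness.
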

    \begin{proof}
        Recall, that $d_{\mathrm{H}}\pars{X,Y} \le \varepsilon$ means that
        \begin{equation}
            \forall x\in X\,\exists y\in Y:\, d\pars{x,y} \le \varepsilon
            \qquad\text{and}\qquad
            \forall y\in Y\,\exists x\in X:\, d\pars{y,x} \le \varepsilon .
        \end{equation} \todo{Make this the definition!}
    
        Let $\varepsilon>0$.
        Since $f$ is uniformly continuous there exists $\delta>0$ such that $d_1\pars{x,y}<\delta$ implies $d_2\pars{f\pars{x},f\pars{y}}<\varepsilon$.
        We now show that $d_{\mathrm{H}}\pars{U,V}<\delta$ implies $d_{\mathrm{H}}\pars{f\pars{U}, f\pars{V}}<\varepsilon$.

        For this let $f_u\in f\pars{U}$ and choose $u\in U$ such that $f\pars{u} = f_u$.
        Since $d_{\mathrm{H}}\pars{U,V}<\delta$ there exists $v\in V$ such that $d_1\pars{u,v}<\delta$ and consequently $d_2\pars{f\pars{u},f\pars{v}}<\varepsilon$, by uniform continuity.
        This means that for every $f_u\in f\pars{U}$ there exists $f_v \in f\pars{V}$ such that $d_2\pars{f_u, f_v} < \varepsilon$.
        Since this argument remains valid if the roles of $U$ and $V$ are reversed we can conclude that $d_{\mathrm{H}}\pars{f\pars{U}, f\pars{V}} < \varepsilon$.
    \end{proof}
    
    \begin{lemma} \label{lem:abs_continuous}
        $\operatorname{abs} : \mcal{V}_{w,\infty} \to \mcal{V}_{w,\infty}$ is Lipschitz continuous with constant $1$.
        $\operatorname{abs} : \mfrak{P}\pars{\mcal{V}_{w,\infty}} \to \mfrak{P}\pars{\mcal{V}_{w,\infty}}$ is uniformly continuous with respect to the Hausdorff pseudometric.
    \end{lemma}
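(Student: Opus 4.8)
The plan is to prove the two assertions in sequence, obtaining the set-valued statement as an immediate consequence of the pointwise one via \cref{lem:continuity_implies_pseudo-Hausdorff_continuity}. Before anything else I would observe that $\operatorname{abs}$ genuinely maps $\mcal{V}_{w,\infty}$ into itself, since $\sqrt{w(y)}\abs{\operatorname{abs}(v)(y)} = \sqrt{w(y)}\abs{v(y)}$ pointwise and hence $\norm{\operatorname{abs}(v)}_{w,\infty} = \norm{v}_{w,\infty}$; this makes the map well-defined.

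For the first claim I would estimate $\norm{\operatorname{abs}(u) - \operatorname{abs}(v)}_{w,\infty}$ directly from the definition of the norm $\norm{\bullet}_{w,\infty} = \esssup_{y\in Y}\sqrt{w(\bullet)}\abs{\bullet}$. The single pointwise ingredient is the reverse triangle inequality: for every $y\in Y$ one has $\abs{\abs{u(y)} - \abs{v(y)}} \le \abs{u(y)-v(y)}$. Multiplying by $\sqrt{w(y)}\ge 0$ and taking the essential supremum over $y$ yields $\norm{\operatorname{abs}(u)-\operatorname{abs}(v)}_{w,\infty} \le \norm{u-v}_{w,\infty}$, which is precisely Lipschitz continuity with constant $1$.

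For the second claim I would note that Lipschitz continuity implies uniform continuity, so the first part shows $\operatorname{abs} : \mcal{V}_{w,\infty}\to\mcal{V}_{w,\infty}$ is uniformly continuous. Applying \cref{lem:continuity_implies_pseudo-Hausdorff_continuity} with $f = \operatorname{abs}$ (and $M_1 = M_2 = \mcal{V}_{w,\infty}$) then gives at once that the set-valued extension $\operatorname{abs} : \mfrak{P}\pars{\mcal{V}_{w,\infty}} \to \mfrak{P}\pars{\mcal{V}_{w,\infty}}$ is uniformly continuous with respect to the Hausdorff pseudometric.

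I do not expect any genuine obstacle here, as the whole argument reduces to the reverse triangle inequality together with the already-established \cref{lem:continuity_implies_pseudo-Hausdorff_continuity}. The only point worth flagging is that $\norm{\bullet}_{w,\infty}$ uses an essential supremum rather than a genuine supremum, so the pointwise inequality need only hold almost everywhere; since it in fact holds for every $y$, this causes no difficulty.
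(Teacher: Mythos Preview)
Your proposal is correct and follows essentially the same route as the paper: the first assertion is obtained from the reverse triangle inequality, and the second follows from \cref{lem:continuity_implies_pseudo-Hausdorff_continuity} using that Lipschitz continuity implies uniform continuity. Your added remarks on well-definedness and the essential supremum are harmless elaborations of the same argument.
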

    \begin{proof}
        The first assertion follows by the reverse triangle inequality, $\abs{\abs{v\pars{y}}-\abs{w\pars{y}}} \le \abs{v\pars{y}-w\pars{y}}$.
        The second asserion follows by \cref{lem:continuity_implies_pseudo-Hausdorff_continuity}, since Lipschitz continuity implies uniform continuity.
    \end{proof}
    
    \begin{lemma} \label{lem:sup_continuous}
        $\sup : \mfrak{P}\pars{\mcal{V}_{w,\infty}} \to \mcal{V}_{w,\infty}$ is Lipschitz continuous with constant $1$.
    \end{lemma}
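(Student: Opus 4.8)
The plan is to reduce the claim to a single pointwise estimate. Since $\norm{\bullet}_{w,\infty}$ is an (essential) supremum over $y$, it suffices to show that
$\sqrt{w\pars{y}}\,\abs{\sup\pars{X}\pars{y} - \sup\pars{Y}\pars{y}} \le d_{\mathrm{H}}\pars{X,Y}$
holds at (almost) every $y\in Y$, for every pair $X,Y\in\mfrak{P}\pars{\mcal{V}_{w,\infty}}$; taking $\esssup$ over $y$ then gives $\norm{\sup\pars{X} - \sup\pars{Y}}_{w,\infty}\le d_{\mathrm{H}}\pars{X,Y}$, which is exactly Lipschitz continuity with constant $1$. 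Because $X$ and $Y$ enter symmetrically, I only need to establish the one-sided bound $\sqrt{w\pars{y}}\pars{\sup\pars{X}\pars{y} - \sup\pars{Y}\pars{y}} \le d_{\mathrm{H}}\pars{X,Y}$ and then exchange the two sets.

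For the estimate itself, write $\varepsilon := d_{\mathrm{H}}\pars{X,Y}$ and fix $\eta>0$. By the definition of the Hausdorff pseudometric, every $x\in X$ admits some $v\in Y$ with $\norm{x-v}_{w,\infty}\le\varepsilon+\eta$, so that $\sqrt{w\pars{y}}\,x\pars{y} \le \sqrt{w\pars{y}}\,v\pars{y} + \varepsilon+\eta \le \sqrt{w\pars{y}}\,\sup\pars{Y}\pars{y} + \varepsilon+\eta$. Taking the supremum over all $x\in X$ yields $\sqrt{w\pars{y}}\,\sup\pars{X}\pars{y} \le \sqrt{w\pars{y}}\,\sup\pars{Y}\pars{y} + \varepsilon+\eta$, and letting $\eta\to0$ gives the one-sided bound. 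Interchanging the roles of $X$ and $Y$ and combining the two inequalities produces the pointwise estimate of the first paragraph, which completes the argument.

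The step I expect to demand the most care is the measure-theoretic bookkeeping concealed in the passage from ``$\norm{x-v}_{w,\infty}\le\varepsilon+\eta$'' to the pointwise inequality above. That norm bound only controls $\sqrt{w\pars{y}}\,\abs{x\pars{y}-v\pars{y}}$ outside a $\rho$-null set depending on the chosen pair $\pars{x,v}$, and since $X$ may be uncountable the union of these exceptional sets need not be null; read as a genuine essential supremum the statement would in fact be false. What makes the argument legitimate here is that $\mcal{V}_{w,\infty}$ is treated as a space of pointwise-defined functions throughout this work—point evaluation $v\mapsto v\pars{y}$ is the very object under study—so the pointwise supremum $\sup\pars{X}\pars{y} = \sup_{x\in X} x\pars{y}$ and the quantities $\sqrt{w\pars{y}}\,\abs{v\pars{y}}$ are all interpreted at each fixed $y$, exactly as in the pointwise reasoning already used for \cref{lem:abs_continuous}. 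I would therefore make this pointwise interpretation explicit at the outset, which dissolves the union-of-null-sets issue and lets the estimate be carried out at every individual $y\in Y$.
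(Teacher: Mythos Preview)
Your proof is correct and follows essentially the same route as the paper's: fix $y$, reduce by symmetry to a one-sided bound, and control $\sup_{x\in X}\inf_{v\in Y}\pars{x\pars{y}-v\pars{y}}$ by the Hausdorff distance. The paper compresses your $\eta$-argument into the identity $\sup\pars{U}\pars{y}-\sup\pars{V}\pars{y}=\sup_{u\in U}\inf_{v\in V}\pars{u\pars{y}-v\pars{y}}$, but otherwise the arguments coincide---you are in fact slightly more careful than the paper about carrying the weight factor $\sqrt{w\pars{y}}$ and about the null-set bookkeeping.
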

    \begin{proof}
        Let $U,V \in \mfrak{P}\pars{\mcal{V}_{w,\infty}}$ and assume w.l.o.g.\ that $\sup\pars{U}\pars{y} \ge\sup\pars{V}\pars{y}$.
        Then $\norm{\sup\pars{U} - \sup\pars{V}}_{w,\infty} \le d_{\mathrm{H}}\pars{U,V}$ follows via
        \begin{equation}
            \abs{\sup\pars{U}\pars{y} - \sup\pars{V}\pars{y}}
            = \sup_{u\in U}\inf_{v\in V} u\pars{y}-v\pars{y} 
            \le \sup_{u\in U}\inf_{v\in V} \norm{u-v}_{w,\infty}
            \le d_{\mathrm{H}}\pars{U, V}
        \end{equation}
        which proves the assertion.
    \end{proof}
    
    \begin{lemma} \label{lem:sqr_continuous}
        $\operatorname{sqr} : \mcal{V}_{w,\infty} \to \mcal{V}_{w^2,\infty}$ is continuous.
        \note{It is true that products of essentially bounded functions are essentially bounded. This however may not be the case for weighted boundedness! As an example, consider $w\pars{x}=\exp\pars{-x^2}$. Then $w^{-1}$ is $w$-bounded, but $w^{-2}$ is not! If we use a weight function $w$, then $\operatorname{sqr}$ maps from $\mcal{V}_{w,\infty}$ to $\mcal{V}_{w^2,\infty}$.}
    \end{lemma}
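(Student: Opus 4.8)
The plan is to reduce everything to the elementary factorization $v^2 - u^2 = (v-u)(v+u)$ while tracking the weights carefully, which is precisely what forces the codomain to be $\mcal{V}_{w^2,\infty}$ rather than $\mcal{V}_{w,\infty}$, as flagged in the accompanying note. First I would verify that $\operatorname{sqr}$ is even well-defined. For any $v\in\mcal{V}_{w,\infty}$,
\begin{equation}
    \norm{\operatorname{sqr}\pars{v}}_{w^2,\infty}
    = \esssup_{y\in Y} w\pars{y}\abs{v\pars{y}}^2
    = \pars*{\esssup_{y\in Y} \sqrt{w\pars{y}}\abs{v\pars{y}}}^2
    = \norm{v}_{w,\infty}^2 < \infty,
\end{equation}
so indeed $\operatorname{sqr}\pars{v}\in\mcal{V}_{w^2,\infty}$. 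This identity already shows that squaring trades one power of the weight: the square of a $w$-essentially-bounded function is only $w^2$-essentially-bounded.

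For continuity, I would fix $v\in\mcal{V}_{w,\infty}$ and let $u\in\mcal{V}_{w,\infty}$ be arbitrary. Writing $v^2-u^2 = \pars{v-u}\pars{v+u}$ and splitting the weight pointwise as $w = \sqrt{w}\cdot\sqrt{w}$, I would estimate
\begin{align}
    \norm{\operatorname{sqr}\pars{v} - \operatorname{sqr}\pars{u}}_{w^2,\infty}
    &= \esssup_{y\in Y} \pars*{\sqrt{w\pars{y}}\abs{v\pars{y}-u\pars{y}}}\pars*{\sqrt{w\pars{y}}\abs{v\pars{y}+u\pars{y}}} \\
    &\le \norm{v-u}_{w,\infty}\,\norm{v+u}_{w,\infty},
\end{align}
where the inequality is the submultiplicativity of the essential supremum for nonnegative functions. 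Bounding $\norm{v+u}_{w,\infty}\le\norm{v}_{w,\infty}+\norm{u}_{w,\infty}$ by the triangle inequality then yields
\begin{equation}
    \norm{\operatorname{sqr}\pars{v} - \operatorname{sqr}\pars{u}}_{w^2,\infty}
    \le \norm{v-u}_{w,\infty}\pars*{2\norm{v}_{w,\infty} + \norm{v-u}_{w,\infty}},
\end{equation}
whose right-hand side tends to $0$ as $u\to v$ in $\mcal{V}_{w,\infty}$, establishing continuity at $v$.

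I expect no genuine obstacle here; the only points requiring care are, first, that $\operatorname{sqr}$ is merely \emph{locally} Lipschitz rather than globally (just as $x\mapsto x^2$ on $\mbb{R}$), so the factor $\norm{v+u}_{w,\infty}$ must be retained rather than absorbed into a uniform constant, and second, the weight bookkeeping. Distributing $w=\sqrt{w}\cdot\sqrt{w}$ so that each factor lands in the $\norm{\bullet}_{w,\infty}$-norm is exactly the step where the choice of codomain $\mcal{V}_{w^2,\infty}$ becomes essential, since attempting to land in $\mcal{V}_{w,\infty}$ would leave an unmatched power of the weight.
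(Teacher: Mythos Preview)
Your proof is correct and follows essentially the same approach as the paper: both factor $v^2 - u^2$ and split the weight $w = \sqrt{w}\cdot\sqrt{w}$ to land in $\norm{\bullet}_{w,\infty}$ on each factor, arriving at the identical bound $\norm{v-u}_{w,\infty}\pars{2\norm{v}_{w,\infty} + \norm{v-u}_{w,\infty}}$. Your direct use of $(v-u)(v+u)$ is marginally cleaner than the paper's $v(v-u)+u(v-u)$ split, and your explicit well-definedness check is a welcome addition that the paper omits.
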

    \begin{proof}
        Fix $v\in\mcal{V}_{w,\infty}$ and let $w\in\mcal{V}_{w,\infty}$ be arbitrary.
        Then 
        \begin{align}
            \norm{v^2 - w^2}_{w^2,\infty}
            &= \norm{vv - vw + vw - ww}_{w^2,\infty} \\
            &\le \norm{v\pars{v-w}}_{w^2,\infty} + \norm{w\pars{v-w}}_{w^2,\infty} \\
            &\le \pars{\norm{v}_{w,\infty}+\norm{w}_{w,\infty}}\norm{v-w}_{w,\infty} .
        \end{align}
        Observe that, due the reverse triangle inequality, $\norm{v-w}_{w,\infty}\le\delta$ implies $\norm{w}_{w,\infty} \le \norm{v}_{w,\infty}+\delta$.
        This proves continuity, since for any $\varepsilon$ we can choose $\delta$ such that $\norm{v-w}_{w,\infty}\le \delta$ implies
        \begin{equation}
            \norm{v^2 - w^2}_{w^2,\infty} \le \pars{2\norm{v}_{w,\infty} + \delta} \delta \le \varepsilon .
        \end{equation}
    \end{proof}
    
    As a composition of continuous functions, the continuity of $\operatorname{sqr}\circ \sup\circ \operatorname{abs} : \mfrak{P}\pars{\mcal{V}_{w,\infty}} \to \mcal{V}_{w^2,\infty}$ is guaranteed by \crefrange{lem:abs_continuous}{lem:sqr_continuous}.
     
    \item To prove this we need the subsequent lemma.
    \begin{lemma} \label{lem:continuity_implies_Hausdorff_continuity}
        Let $\pars{M_1,d_1}$ and $\pars{M_2,d_2}$ be metric spaces, let $f:M_1\to M_2$ and define $f\pars{X} := \braces{ f\pars{x} : x\in X}$ for any $X\in M_1$.
        If $f$ is continuous, then $f : \mfrak{C}\pars{M_1}\to\mfrak{C}\pars{M_2}$ is continuous with respect to the Hausdorff metric.
    \end{lemma}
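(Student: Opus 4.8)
The plan is to prove continuity at an arbitrary but fixed point $X_0 \in \mfrak{C}\pars{M_1}$ of the hyperspace. First I would record that the induced map is well defined, i.e.\ that $f\pars{X}$ is again a non-empty compact subset of $M_2$ whenever $X$ is, which is simply the fact that continuous images of compact sets are compact. The decisive observation is that, unlike in \cref{lem:continuity_implies_pseudo-Hausdorff_continuity} where global \emph{uniform} continuity of $f$ had to be assumed, here the restriction to compact arguments lets us get away with mere continuity: compactness of $X_0$ upgrades the pointwise continuity of $f$ to a uniform statement in a neighborhood of $X_0$, and every set $X$ with $d_{\mathrm{H}}\pars{X, X_0}$ small is contained in such a neighborhood.

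Concretely, fix $\varepsilon > 0$. The crux of the argument --- and the step I expect to be the main obstacle --- is the following uniform statement relative to $X_0$: there exists $\delta > 0$ such that $x_0 \in X_0$ and $d_1\pars{x_0, y} < \delta$ together imply $d_2\pars{f\pars{x_0}, f\pars{y}} < \varepsilon$. To prove it I would, for each $x_0 \in X_0$, use continuity of $f$ at $x_0$ to pick $\delta_{x_0} > 0$ with the property that $d_1\pars{x_0, y} < \delta_{x_0}$ implies $d_2\pars{f\pars{x_0}, f\pars{y}} < \varepsilon/2$; then cover $X_0$ by the balls $B\pars{x_0, \delta_{x_0}/2}$, extract a finite subcover by compactness, and set $\delta$ to be half the minimum of the finitely many radii involved. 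A triangle-inequality argument shows that any $y$ lying within $\delta$ of some point of $X_0$ is within $\delta_{x_i}$ of the corresponding center $x_i$ of the subcover, whence $d_2\pars{f\pars{x_0}, f\pars{y}} < \varepsilon$ follows for the nearby $x_0$ as well. This is precisely the place where compactness does the work that uniform continuity did in the earlier lemma.

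With this $\delta$ in hand I would conclude by unwinding the definition of the Hausdorff distance. Assume $d_{\mathrm{H}}\pars{X, X_0} < \delta$. For the first one-sided term, any $x \in X$ admits some $x_0 \in X_0$ with $d_1\pars{x, x_0} < \delta$, so the uniform statement gives $d_2\pars{f\pars{x}, f\pars{x_0}} < \varepsilon$ and hence $\inf_{b \in f\pars{X_0}} d_2\pars{f\pars{x}, b} < \varepsilon$; taking the supremum over $x \in X$ bounds $\sup_{a \in f\pars{X}} \inf_{b \in f\pars{X_0}} d_2\pars{a,b}$ by $\varepsilon$. The symmetric argument, starting from $x_0 \in X_0$ and choosing a nearby $x \in X$, bounds the other one-sided term identically. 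Taking the maximum yields $d_{\mathrm{H}}\pars{f\pars{X}, f\pars{X_0}} \le \varepsilon$, which establishes continuity at $X_0$; since $X_0$ was arbitrary, the lemma follows. To obtain the strict inequality $< \varepsilon$ one simply runs the uniform statement with $\varepsilon/2$ in place of $\varepsilon$.
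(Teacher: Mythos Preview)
Your proof is correct and follows essentially the same route as the paper: use pointwise continuity to cover the compact set $X_0$ by half-radius balls, extract a finite subcover, take $\delta$ as (half) the minimum radius, and combine with a triangle inequality through the centers to control both one-sided Hausdorff terms. The only cosmetic difference is that you first isolate the uniform-continuity statement relative to $X_0$ and then apply it symmetrically, whereas the paper treats the two directions of the Hausdorff distance separately; the underlying compactness argument is identical.
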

    \begin{proof}
        $f : \mfrak{C}\pars{M_1}\to\mfrak{C}\pars{M_2}$ is well-defined since the image of a compact set under a continuous function is compact.
        Now recall, that $d_{\mathrm{H}}\pars{X,Y} \le \varepsilon$ means that
        \begin{equation}
            \forall x\in X\,\exists y\in Y:\, d\pars{x,y} \le \varepsilon
            \qquad\text{and}\qquad
            \forall y\in Y\,\exists x\in X:\, d\pars{y,x} \le \varepsilon .
        \end{equation}
        
        Let $\varepsilon>0$ and $U\in\mfrak{C}\pars{M_1}$.
        Since $f$ is continuous in every $u\in U$ there exists a $\delta_u > 0$ that guarantees
        \begin{equation}
            d_1\pars{u,\tilde{u}} \le \delta_u \Rightarrow d_2\pars{f\pars{u}, f\pars{\tilde{u}}} \le \frac{\varepsilon}{2} .
        \end{equation}
        Now define the sets $N_u := \braces{\tilde{u}\in M_1 : d_1\pars{u,\tilde{u}} \le \frac{\delta_u}{2}}$.
        Since $u\in N_u$, the family $\braces{N_u}_{u\in U}$ defines a covering of $U$ and since $U$ is compact there exists a finite subcovering $\braces{N_{u_i}}_{i=1,\ldots,n}$.
        Choose $\delta := \min_{i=1,\ldots,n}\frac{\delta_{u_i}}{2}$ and note, that $\delta$ has to be positive, since it is the minimum of finitely many positive numbers.
        Now let $V\in\mfrak{C}\pars{M_1}$ such that $d_{\mathrm{H}}\pars{U,V}\le\delta$.
        
        First, we show that
        \begin{equation}
            \forall f_v\in f\pars{V}\exists f_u\in f\pars{U} : d_2\pars{f_v, f_u} \le \varepsilon .
        \end{equation}
        For this let $v\in V$ be any element that satisfies $f\pars{v} = f_v$.
        Since $d_{\mathrm{H}}\pars{U,V}\le\delta$ there exists $u\in U$ with $d_1\pars{u,v}\le\delta$.
        Moreover, by definition of the covering $\braces{N_{u_i}}_{i=1,\ldots,n}$, there exists $u_i$ such that $d_1\pars{u,u_i}\le\frac{\delta_{u_i}}{2}$.
        Using the triangle inequality, we thus obtain
        \begin{equation}
            d_1\pars{u_i,v} \le d_1\pars{u_i,u} + d_1\pars{u,v} \le \frac{\delta_{u_i}}{2} + \delta \le \delta_{u_i}
        \end{equation}
        and the definition of $\delta_{u_i}$ finally yields $d_2\pars{f\pars{v}, f\pars{u_i}} \le \frac{\varepsilon}{2} \le \varepsilon$.
        
        Now we show that
        \begin{equation}
            \forall f_u\in f\pars{U}\exists f_v\in f\pars{V} : d_2\pars{f_u, f_v} \le \varepsilon .
        \end{equation}
        Analogously to the argument from above let $u\in U$ be any element that satisfies $f\pars{u}=f_u$.
        Since $d_{\mathrm{H}}\pars{U,V}\le\delta$ there exists $v\in V$ with $d_1\pars{u,v}\le\delta$ and by the definition of the covering there exists also a $u_i$ with $d_1\pars{u,u_i}\le\frac{\delta_{u_i}}{2}$.
        We can now estimate
        \begin{equation}
            d_2\pars{f\pars{u},f\pars{v}} \le d_2\pars{f\pars{u},f\pars{u_i}} + d_2\pars{f\pars{u_i},f\pars{v}} \le \frac{\varepsilon}{2} + \frac{\varepsilon}{2} = \varepsilon
        \end{equation}
        which holds by the definition of $\delta_{u_i}$ and because
        \begin{equation}
            d_1\pars{u_i,v} \le d_1\pars{u_i,u} + d_1\pars{u,v} \le \frac{\delta_{u_i}}{2} + \delta \le \delta_{u_i} .
        \end{equation}
    \end{proof}
    
    Since the function $u \mapsto u/\norm{u}$ is continuous on $\mcal{V}_{w,\infty}\setminus \braces{0}$ the function $U : \mfrak{C}\pars{\mcal{V}_{w,\infty}\setminus\braces{0}} \to \mfrak{C}\pars{S\pars{0,1}\cap\mcal{V}_{w,\infty}}$ is continuous by \cref{lem:continuity_implies_Hausdorff_continuity}.
    \note{$U : \mfrak{P}\pars{\mcal{V}_{w,\infty}} \to \mfrak{P}\pars{S\pars{0,1}\cap\mcal{V}_{w,\infty}}$ ist nicht stetig. Gegenbsp ist die Menge $\pars{0,1} \subseteq \mbb{R} = \mcal{V}_{w,\infty}$, denn $d_{\mathrm{H}}\pars{\pars{-\delta/2,1}, \pars{0,1}} < \delta$, aber $d_{\mathrm{H}}\pars{U\pars{-\delta/2,1}, U\pars{0,1}} = d_{\mathrm{H}}\pars{\braces{-1,1}, \braces{1}} = 2$.}
    
    \item Let $r>0$.
    Since the function $u \mapsto u/\norm{u}$ is uniformly continuous on $\mcal{V}_{w,\infty}\setminus B\pars{0,r}$ the function $U : \mfrak{P}\pars{\mcal{V}_{w,\infty}\setminus B\pars{0,r}} \to \mfrak{P}\pars{S\pars{0,1}\cap\mcal{V}_{w,\infty}}$ is uniformly continuous by \cref{lem:continuity_implies_pseudo-Hausdorff_continuity}.
    
    \item By definition of the truncated Hausdorff distance, $U : \operatorname{Cone}\pars{\mfrak{P}\pars{\mcal{V}_{w,\infty}}} \to \mfrak{P}\pars{S\pars{0,1}\cap\mcal{V}_{w,\infty}}$ is Lipschitz continuous with constant $1$.
    
    \item[6.-7.] Every $v\in A + B$ can be written as $v = \vec{v}^\intercal\alpha$ for some $\alpha\in\mathbb{R}^2$ and $\vec{v}\in \pars{A\times B}\setminus\braces{0}$.
    Moreover, $A\perp B$ implies that $\norm{v}^2 = \alpha^\intercal D(\vec{v})^2 \alpha$ with $D(\vec{v}) := \operatorname{diag}\pars{\norm{\vec{v}_1}, \norm{\vec{v}_2}}$.
    Now define $C_{\vec{v},y} = D(\vec{v})^{-1}\vec{v}(y)$ and observe that
    \note{The second supremum is the reason why equality does not hold. If $A$ and $B$ are singletons, then only $\alpha = \pars{1\ 1}$ is admissible.}
    \begin{align}
        \mfrak{K}_{A + B}\pars{y} 
        &\le \sup_{\vec{v}\in \pars{A\times B}\setminus\braces{0}} \sup_{\alpha\in\mathbb{R}^2\setminus\braces{0}}\frac{\abs{\alpha^\intercal \vec{v}(y)\vec{v}(y)^\intercal \alpha}}{\alpha^\intercal D(\vec{v})^2 \alpha}
        = \sup_{\vec{v}\in \pars{A\times B}\setminus\braces{0}} \sup_{\beta\in\mathbb{R}^2\setminus\braces{0}} \frac{\abs{\beta^\intercal C_{\vec{v},y}C_{\vec{v},y}^\intercal \beta}}{\beta^\intercal \beta} \\
        &= \sup_{\vec{v}\in \pars{A\times B}\setminus\braces{0}} \norm{C_{\vec{v},y}}_2^2
        = \sup_{\vec{v}_1\in A\setminus\braces{0}} \sup_{\vec{v}_2\in B\setminus\braces{0}} \tfrac{\abs{\vec{v}_1(y)}^2}{\norm{\vec{v}_1}^2} + \tfrac{\abs{\vec{v}_2(y)}^2}{\norm{\vec{v}_2}^2}
        = \mfrak{K}_{A}\pars{y} + \mfrak{K}_{B}\pars{y} .
    \end{align}
    Note that the first inequality is indeed an equality, if $A$ and $B$ are linear spaces.
    \note{We can generalize this from $\abs{v}_y = \abs{v\pars{y}}$ to $\abs{v}_y = \norm{G_yv}_2$ (with e.g.\ $G_y v = \pars{v\pars{y}\ v'\pars{y}}$) and show $\mfrak{K}_{A+B}\le \mfrak{K}_A + \mfrak{K}_B$.
    Let $\abs{v}_y^2 := \norm{L_y v}_2^2 = \abs{\alpha^\intercal \operatorname{diag}\pars{\vec{v}}^\intercal \begin{pmatrix}L_y^\intercal\\L_y^\intercal\end{pmatrix} \begin{pmatrix}L_y&L_y\end{pmatrix} \operatorname{diag}\pars{\vec{v}}\alpha}$ and define $C_{\vec{v},y} := \begin{pmatrix}L_y&L_y\end{pmatrix} \operatorname{diag}\pars{\vec{v}} D\pars{\vec{v}}^{-1} = \begin{pmatrix}\frac{L_y\vec{v}_1}{\norm{v_1}}&\frac{L_y\vec{v}_2}{\norm{v_2}}\end{pmatrix}$.
    Then the above argument can be repeated with $\sup_{\beta} \frac{\abs{\beta^\intercal C_{\vec{v},y}^\intercal C_{\vec{v},y}\beta}}{\beta^\intercal\beta} \le \norm{C_{\vec{v},y}}_{\mathrm{Fro}}^2$ and $\sup_{\vec{v}_1}\sup_{\vec{v}_2}\norm{C_{\vec{v},y}}_{\mathrm{Fro}}^2 = \sup_{\vec{v}_1} \frac{\norm{L_y v_1}^2}{\norm{v_1}^2} + \sup_{\vec{v}_2} \frac{\norm{L_y v_2}^2}{\norm{v_2}^2}$ which concludes the proof.
    }
    \addtocounter{enumi}{2}

    \item Let $a\in A$ and $b\in B$.
    Since $a\indep b$ also $a^2\indep b^2$ and consequently $\norm{a\cdot b}^2 = \mbb{E}\bracs{a^2 b^2} = \mbb{E}\bracs{a^2} \mbb{E}\bracs{b^2} = \norm{a}^2\norm{b}^2$.
    Now recall that $\mathfrak{K}_A\pars{y} = \sup_{a\in U\pars{A}} a\pars{y}^2$. Thus
    \begin{equation}
        \mathfrak{K}_{A\cdot B}\pars{y}
        = \sup_{a\in A} \sup_{b\in B} \frac{\pars{a\cdot b}\pars{y}^2}{\norm{a\cdot b}^2}
        = \sup_{a\in A} \sup_{b\in B} \frac{a\pars{y}^2\cdot b\pars{y}^2}{\norm{a}^2\norm{b}^2}
        = \mathfrak{K}_{A}\pars{y} \cdot \mathfrak{K}_{B}\pars{y} .
    \end{equation}
    \todo{For this to hold for arbitrary $\abs{\bullet}_y$ it needs to hold that $\abs{a\cdot b}_y = \abs{a}_y\abs{b}_y$. This is the case for $L^2$ and $H^1_0$ but not for $H^1$.
    Otherwise it holds that $\mathfrak{K}_{A\cdot B}\pars{y} \le \mathfrak{K}_{A}\pars{y} \cdot \mathfrak{K}_{B}\pars{y}$.
    \begin{itemize}
        \item This also holds for GRKHS. For the sake of simplicity assume $A\subseteq L^2\pars{Y_1,\rho_1}$ and  $B\subseteq L^2\pars{Y_2,\rho_2}$.
        \item Note that for general GRKHS $\abs{\bullet}_y$ is different for $a$, $b$ and $a\cdot b$. On $Y_1$ and $Y_2$ e.g.\ it is $H^1\pars{Y_{1/2}}$ but on $Y_1\times Y_2$ it is $H^{1,\mathrm{mix}} = H^1\otimes H^1$!
        \item Then $\abs{a\cdot b}_y = \norm{\pars{L^{Y_1}_y a} \otimes \pars{L^{Y_2}_y b}}_{\mathrm{Fro}} = \norm{L^{Y_1}_y a}_2 \norm{L^{Y_2}_y b}_2$ where $\pars{L^{Y_1}_y a} \otimes \pars{L^{Y_2}_y b}$ is the outer product between the vectors $\pars{L^{Y_1}_y a}$ and $\pars{L^{Y_2}_y b}$.
        I.e.\ we need $L^{Y_1\times Y_2}_y = L^{Y_1}_{y_1}\otimes L^{Y_2}_{y_2}$.
        \item Finally, $\norm{a\cdot b} = \norm{a}\norm{b}$. (E.g.\ for $H^1$: $\norm{ab}_{H^1\otimes H^1}^2 = \inner{ab,ab}+\inner{a'b,a'b}+\inner{ab',ab'}+\inner{a'b',a'b'} = \inner{a,a}\inner{b,b}+\inner{a',a'}\inner{b,b}+\inner{a,a}\inner{b',b'}+\inner{a',a'}\inner{b',b'} = \pars{\inner{a,a}+\inner{a',a'}} + \pars{\inner{b,b}+\inner{b',b'}} = \norm{a}_{H^1}^2 \norm{b}_{H^1}^2$.)
        \item Zumindest für normale Sobolevräume gilt dass $v\in H^k$ und $\Phi$ ein $k$-diffeomorphismus, dann ist $v\circ\Phi\in H^k$. Etwas ähnliches gilt bestimmt auch für $H^{k,\mathrm{mix}}$.
        Deswegen können wir wieder verallgemeinern auf $A\indep B$ (i.e.\ es existiert eine Koordinaten transformation $Y_1\times Y_2 \to Z_1\times Z_2$ sodass $A\subseteq L^2\pars{Z_1,\rho_1}$ und $B\subseteq L^2\pars{Z_2,\rho_2}$).
    \end{itemize}
    }

    \item A direct consequence of \cref{thm:K_properties:direct_sum} is the following lemma.
    \begin{lemma} \label{lem:K_properties:span}
        Let $\braces{P_j}_{j\in J}$ be an orthonormal basis for $A$. Then $\mfrak{K}_{A}\pars{y} = \sum_{j\in J} P_j\pars{y}^2$. \hfill\qedsymbol
    \end{lemma}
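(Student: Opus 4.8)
The plan is to reduce the statement to the one-dimensional computation and then assemble it through the direct-sum rule \cref{thm:K_properties:direct_sum}. First I would record the variation function of a single normalized basis function: since $\norm{P_j}=1$ and every nonzero element of the line $\inner{P_j}$ has the form $\lambda P_j$ with $\lambda\ne0$, it follows that
\[
    \mfrak{K}_{\inner{P_j}}\pars{y}
    = \sup_{\lambda\ne0} \frac{\abs{\lambda P_j\pars{y}}^2}{\norm{\lambda P_j}^2}
    = \frac{\abs{P_j\pars{y}}^2}{\norm{P_j}^2}
    = P_j\pars{y}^2 .
\]
Because $\braces{P_j}_{j\in J}$ is orthonormal, the span of any \emph{finite} subset $F\subseteq J$ is the orthogonal direct sum $A_F := \bigoplus_{j\in F}\inner{P_j}$, so iterating \cref{thm:K_properties:direct_sum} over the finitely many summands yields $\mfrak{K}_{A_F}\pars{y} = \sum_{j\in F} P_j\pars{y}^2$. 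This settles the finite-dimensional case immediately.

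For an infinite index set $J$ I would pass to the limit by a density argument. The union $\bigcup_{F} A_F$, taken over all finite subsets $F\subseteq J$, is dense in $A = \cl\pars{\bigcup_F A_F}$, so \cref{thm:K_properties:closure} together with the monotone supremum rule \cref{thm:K_properties:union} gives
\[
    \mfrak{K}_A
    = \mfrak{K}_{\cl\pars{\bigcup_F A_F}}
    = \mfrak{K}_{\bigcup_F A_F}
    = \sup_{F} \mfrak{K}_{A_F}\pars{y}
    = \sup_{F} \sum_{j\in F} P_j\pars{y}^2
    = \sum_{j\in J} P_j\pars{y}^2 ,
\]
where the last equality is just the definition of a sum of nonnegative terms as the supremum of its finite partial sums.

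The only delicate point is the infinite-dimensional case, and specifically the two facts fused together in the chain above: that $\bigcup_F A_F$ is genuinely dense in $A$ (so that \cref{thm:K_properties:closure} applies) and that the supremum over finite truncations may be moved inside $\mfrak{K}$ via \cref{thm:K_properties:union}. Both are clean once one unwinds the definitions, but I expect the density claim to carry the bulk of the argument, since it is where the completeness of the orthonormal system $\braces{P_j}_{j\in J}$ in $A$ is actually used; in the finite-dimensional setting the lemma is a pure one-line induction on \cref{thm:K_properties:direct_sum}.
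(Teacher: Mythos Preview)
Your proof is correct and follows the paper's own line: the lemma is stated there as ``a direct consequence of \cref{thm:K_properties:direct_sum}'' with no further argument. You simply supply the details the paper omits, including the one-line computation for a single basis vector and the density/closure passage for the infinite-dimensional case.
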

    Now let $\braces{P_{A,j}}_{j\in J}$ be an orthonormal basis of $A$ and $\braces{P_{B,k}}_{k\in K}$ be an orthonormal basis of $B$.
    Then $\braces{P_{A,j}\otimes P_{B,k}}_{j\in J, k\in K}$ is an orthonormal basis for $A\otimes B$ and by \cref{lem:K_properties:span}
    \begin{equation}
        \mfrak{K}_{A\otimes B}\pars{y}
        = \sum_{j\in J}\sum_{k\in K} P_{A,j}\pars{y}^2\cdot P_{B,k}\pars{y}^2
        = \pars*{\sum_{j\in J} P_{A,j}\pars{y}^2}\cdot\pars*{\sum_{k\in K} P_{B,k}\pars{y}^2}
        = \mfrak{K}_A\pars{y}\cdot \mfrak{K}_B\pars{y} .
    \end{equation}
\end{enumerate}

\section{Proof of Theorem~\ref{thm:convergence_M}} \label{proof:thm:convergence_M}

Recall that $R = \operatorname{rch}\pars{\mcal{M}\cap B\pars{u,r_0}}$ and $r \le \min\braces{r_0, R}$ and define $C := \pars{2R}^{-1}$.
Also recall that $d_{\mathrm{H}}\pars{\mcal{M}\cap B\pars{u,r}, \pars{u+\mbb{T}_u\mcal{M}}\cap B\pars{u,r}} \le Cr^2$ is equivalent to the conjunction of the following two statements.
\begin{enumerate}
    \item For every $v\in \mcal{M}\cap B\pars{u,r}$ there exists a $w\in \pars{u+\mbb{T}_u\mcal{M}}\cap B\pars{u,r}$ such that $\norm{v-w}\le Cr^2$.
    \item For every $w\in \pars{u+\mbb{T}_u\mcal{M}}\cap B\pars{u,r}$ there exists a $v\in \mcal{M}\cap B\pars{u,r}$ such that $\norm{v-w}\le Cr^2$.
\end{enumerate}

\paragraph{Proof of 1.}
This statement characterizes the reach of a set.
An easily accessible proof that relies only on the definition of $R$ and fundamental geometric arguments is presented in \cite[Theorem~7.8~(2)]{boissonnat2018geometric}.
We reiterate it in the following  since the proof of the second statement relies on similar arguments.

Let $v\in\mcal{M}\cap B\pars{u,r}$. %
Then there exists a unique best approximation of $v$ in $u+\mbb{T}_u\mcal{M}$ which we denote by $w$.
To show that $\norm{v-w}\le C\norm{u-v}^2$ we consider the intersection of the sets 
$\mcal{M}$ and $u+\mbb{T}_u\mcal{M}$ with the plane $\inner{u, v, w}$.
Since all three points lie in this plane their relative distances are preserved and it suffices to consider this two-dimensional problem from here on.
Let $D$ be the disk of radius $R$ that is tangent to $\mbb{T}_u\mcal{M}$ at $u$ and whose center $c$ is on the same side of $u+\mbb{T}_u\mcal{M}$ as $v$.
This is illustrated in \cref{fig:projecting_M_to_TM}.
Since $D$ is tangent to $\mbb{T}_u\mcal{M}$ and has radius $R$, it follows that $D$ only intersects $\mcal{M}$ in $u$.
Hence, $v$ does not lie in the interior of $D$ and the line segment $\overline{uv} = \braces{\lambda u + \pars{1-\lambda}v : \lambda\in\bracs{0,1}}$ must intersect the boundary of $D$ in a point $x$.
Since $\norm{v-w} = \norm{u-v}\sin\pars{\alpha}$, it suffices to bound $\sin\pars{\alpha}$.

Note that $\Delta\pars{u,c,x}$ is an isosceles triangle which entails that $\beta = 2\alpha$ and $\norm{u-x} = 2R\sin\pars{\frac{\beta}{2}} = 2R\sin\pars{\alpha}$.
Using $\norm{u-x}\le\norm{u-v}$ yields
\begin{equation} \label{eq:v-w_bound}
    \norm{v-w} = \norm{u-v}\sin\pars{\alpha} = \norm{u-v}\frac{\norm{u-x}}{2R} \le C\norm{u-v}^2 \le Cr^2.
\end{equation}
Finally, note that, by the Pythagorean theorem, $\norm{u-w}^2 = \norm{u-v}^2 - \norm{v-w}^2 \le r^2$ and thus $w\in\pars{u+\mbb{T}_u\mcal{M}}\cap B\pars{u,r}$.

\begin{figure}[ht]
    \centering
    \newcommand{\degre}{\ensuremath{^\circ}}
    \definecolor{angleColor}{rgb}{0.5,0.5,0.5}
    \definecolor{denim}{rgb}{0.08, 0.38, 0.74}
    \tikzset{
    dot/.style = {circle, fill, minimum size=#1, inner sep=0pt, outer sep=0pt},
    dot/.default = 3.5pt  %
    }
    \newcommand*\curvature{0.1}
    \newcommand*\radius{3.0}  %
    \newcommand*\vx{5.5}
    \pgfmathdeclarefunction{Parabola}{1}{\pgfmathparse{-(#1)^2*\curvature/2}}
    \begin{tikzpicture}[line cap=round,line join=round]
    \newcommand*\xlim{7.5}
    \clip(-\xlim,-4) rectangle (\xlim,1);  %
    
    \coordinate (u) at (0,0);
    \coordinate (c) at (0,-\radius);
    \coordinate (v) at ($(\vx,{Parabola(\vx)})$);
    \coordinate (w) at (\vx,0);
    
    \draw [line width=1.2pt,domain=-6.5:6.5,samples=50,name path=parabola] plot (\x, {Parabola(\x)});
    {
        \def\Mx{-5.75}
        \draw[color=black] ($(\Mx,{Parabola(\Mx)})$) node[anchor=north] {$\mcal{M}$};
    }
    \draw [line width=1.2pt,domain=-\xlim:\xlim,name path=line] plot (\x, {0*\x});
    {
        \def\TuMx{-6.5}
        \draw[color=black] (\TuMx,0) node[anchor=south] {$u+\mbb{T}_u\mcal{M}$};
    }
    \draw [name path=circle] (c) circle (\radius);
    {
        \def\Dphi{170}
        \draw[color=black] ($(c) + (\Dphi:\radius)$) node[anchor=east] {$D$};
    }
    
    \node[dot, label=above:$u$] at (u) {};
    \node[dot, label=below:$v$] at (v) {};
    \node[dot, label=above:$w$] at (w) {};
    \draw (v) -- (w);
    \draw [dash pattern=on 3pt off 3pt,name path=segment] (u) -- (v);
    \path [name intersections={of = circle and segment}];
	\node[dot, label={[shift={(-0.4,-0.4)}]:$x$}] (x) at (intersection-2) {};
    \node[dot, label=below:$c$] at (c) {};
    \draw (c) -- (u) node [midway, left] {$R$}
          (c) -- (x); %
          
    \draw let
        \p1 = (x),
        \p2 = (u),
        \n1 = {veclen(\x1-\x2,\y1-\y2)},
    in
        \pgfextra{\pgfmathsetmacro{\sinAngle}{\n1/(2cm*\radius)}}
        \pgfextra{\pgfmathsetmacro{\startAngle}{90-2*asin(\sinAngle)}}
        ($(c)+(\startAngle:1)$) arc (\startAngle:90:1) node [midway, label={[shift={(-0.03,-0.65)}]$\beta$}] {};

    \draw let
        \p1 = (u),
        \p2 = (v),
        \p3 = (w),
        \n1 = {veclen(\x1-\x2,\y1-\y2)},  %
        \n2 = {veclen(\x2-\x3,\y2-\y3)},  %
    in
        \pgfextra{\pgfmathsetmacro{\sinAngle}{\n2/\n1}}
        \pgfextra{\pgfmathsetmacro{\startAngle}{-asin(\sinAngle)}}
        ($(u)+(\startAngle:2)$) arc (\startAngle:0:2) node[label={[shift={(0,-0.07)}]$\alpha$}] {};
    \end{tikzpicture}
    \caption{}
    \label{fig:projecting_M_to_TM}
\end{figure}

\paragraph{Proof of 2.}
Let $w\in \pars{u+\mbb{T}_u\mcal{M}}\cap B\pars{u,r}$.
By \cref{prop:reach_increases} we know that $\rch\pars{\mcal{M}\cap B\pars{u,r}} \ge R$
and since $r\le R$ there exists a best approximation of $w$ in $\mcal{M}\cap B\pars{u,r}$ which we denote by $v$.
To show that $\norm{v-w} \le C\norm{u-w}^2$ we consider again the intersection of the sets 
$\mcal{M}$ and $u+\mbb{T}_u\mcal{M}$ with the plane $\inner{u, v, w}$.
Again, the distance between the points is preserved and we can consider the resulting two-dimensional problem.
This is illustrated in \cref{fig:projecting_TM_to_M}.
Let $D$ be the disk of radius $R$ that is tangent to $\mcal{M}$ at $u$ and whose center $c$ is on the same side of $\mbb{T}_u\mcal{M}$ as $v$.
Note, that the best approximation of $w$ in $D$ is given by $x := R\frac{w-c}{\norm{w-c}} + c$ and denote the intersection of the line segment $\overline{wx} = \braces{\lambda w + \pars{1-\lambda}x : \lambda\in\bracs{0,1}}$ with $\mcal{M}$ by $\tilde{v}$.
By the best approximation property and the definition of $\tilde{v}$ and $x$ it follows that 
\begin{equation}
    \norm{w-v} \le \norm{w-\tilde{v}} \le \norm{w-x} .
\end{equation}
It thus suffices to bound $\norm{w-x}$ which is given by the Pythagorean theorem as $\norm{w-x} = \sqrt{R^2+\norm{w-u}^2} - R$.

Defining $\ell\pars{r} := \sqrt{R^2+r^2} - R$ and $\tilde{\ell}\pars{r} := \frac{r^2}{2R}$ we observe that $\ell\pars{r} \le \tilde{\ell}\pars{r}$ since $\ell\pars{0} = 0 = \tilde{\ell}\pars{0}$ and
\begin{equation}
    \ell'\pars{r} = \frac{r}{\sqrt{R^2+r^2}} \le \frac{r}{R} = \tilde{\ell}'\pars{r} .
\end{equation}
This yields $\norm{w-v}\le\norm{w-x} = \ell\pars{\norm{w-u}} \le \tilde\ell\pars{\norm{w-u}} = Cr^2$ and concludes the proof.
\qedappendix

\begin{figure}[ht]
    \centering
    \newcommand{\degre}{\ensuremath{^\circ}}
    \definecolor{angleColor}{rgb}{0.5,0.5,0.5}
    \definecolor{denim}{rgb}{0.08, 0.38, 0.74}
    \tikzset{
    dot/.style = {circle, fill, minimum size=#1, inner sep=0pt, outer sep=0pt},
    dot/.default = 3.5pt  %
    }
    \newcommand*\curvature{0.1}
    \newcommand*\radius{3.0}  %
    \newcommand*\wx{5.5}
    \pgfmathdeclarefunction{Parabola}{1}{\pgfmathparse{-(#1)^2*\curvature/2}}
    \begin{tikzpicture}[line cap=round,line join=round]
    \newcommand*\xlim{7.5}
    \clip(-\xlim,-4) rectangle (\xlim,1);  %
    
    \coordinate (u) at (0,0);
    \coordinate (c) at (0,-\radius);
    \coordinate (w) at (\wx,0);
    
    \draw [line width=1.2pt,domain=-6.5:6.5,samples=50,name path=parabola] plot (\x, {Parabola(\x)});
    {
        \def\Mx{-5.75}
        \draw[color=black] ($(\Mx,{Parabola(\Mx)})$) node[anchor=north] {$\mcal{M}$};
    }
    \draw [line width=1.2pt,domain=-\xlim:\xlim,name path=line] plot (\x, {0*\x});
    {
        \def\TuMx{-6.5}
        \draw[color=black] (\TuMx,0) node[anchor=south] {$u+\mbb{T}_u\mcal{M}$};
    }
    \draw [name path=circle] (c) circle (\radius);
    {
        \def\Dphi{170}
        \draw[color=black] ($(c) + (\Dphi:\radius)$) node[anchor=east] {$D$};
    }
    
    \node[dot, label=above:$u$] at (u) {};
    \node[dot, label=below:$c$] at (c) {};
    \node[dot, label=above:$w$] at (w) {};
    \draw [name path=segment] (c) -- (w);
    
    \path [name intersections={of = circle and segment}];
	\node[dot, label={[shift={(0.08,0.08)}]$x$}] (x) at (intersection-1) {};
    \draw (c) -- (u) node [midway, left] {$R$}
          (c) -- (x); %
	
    \path [name intersections={of = parabola and segment}];
	\node[dot, label=below:$\tilde{v}$] (tilde_v) at (intersection-1) {};
	
	\coordinate (v) at ($(\wx,{Parabola(\wx)})!0.4!(tilde_v)$);
    \node[dot, label=below:$v\vphantom{\tilde{v}}$] at (v) {};
    \draw (v) -- (w);
    \end{tikzpicture}
    \caption{}
    \label{fig:projecting_TM_to_M}
\end{figure}

\section{Proof of Theorem~\ref{thm:convergence_UM}} \label{proof:thm:convergence_UM}

Recall that $R = \operatorname{rch}\pars{\mcal{M}\cap B\pars{u,r_0}}$ and $r \le \min\braces{r_0, R}$ and define $C := \pars{2R}^{-1}$.
To prove $d_{\mathrm{H}}\pars{U\pars{\mcal{M}\cap B\pars{u,r}-u}, U\pars{\mbb{T}_u\mcal{M}}} \le 2Cr$, note that $d_{\mathrm{H}}$ is induced by a norm and is therefore absolutely homogeneous and translation invariant.
Therefore,
\begin{equation}
    d_{\mathrm{H}}\pars{U\pars{\mcal{M}\cap B\pars{u,r}-u}, U\pars{\mbb{T}_u\mcal{M}}} = \frac{1}{r} d_{\mathrm{H}}\pars{rU\pars{\mcal{M}\cap B\pars{u,r}-u}, rU\pars{\mbb{T}_u\mcal{M}}} .
\end{equation}
Now define the operator $U_r\pars{X} := rU\pars{X}$ that scales every element of a set to norm $r$.
The claim follows if $d_{\mathrm{H}}\pars{U_r\pars{\mcal{M}\cap B\pars{u,r}-u}, U_r\pars{\mbb{T}_u\mcal{M}}} \le 2Cr^2$.
To prove this we need to show that the following two statements hold.
\begin{enumerate}
    \item For every $\hat{v}\in U_r\pars{\mcal{M}\cap B\pars{u,r}-u}$ there exists a $\hat{w}\in U_r\pars{\mbb{T}_u\mcal{M}}$ such that $\norm{\hat{v}-\hat{w}}\le 2Cr^2$.
    \item For every $\hat{w}\in U_r\pars{\mbb{T}_u\mcal{M}}$ there exists a $\hat{v}\in U_r\pars{\mcal{M}\cap B\pars{u,r}-u}$ such that $\norm{\hat{v}-\hat{w}}\le 2Cr^2$.
\end{enumerate}

\paragraph{Proof of 1.}
Let $\hat{v}\in U_r\pars{\mcal{M}\cap B\pars{u,r}-u}$ and let $v\in\mcal{M}\cap B\pars{u,r}-u$ be any element that satisfies $U_r\pars{\braces{v}}=\braces{\hat{v}}$.
In the proof of \cref{thm:convergence_M} we have shown that there exists a $w\in\mbb{T}_u\mcal{M}$ that satisfies $\norm{v-w}\le C\norm{v}^2$ (cf.~\cref{eq:v-w_bound}).
We use this $w$ to define 
\begin{equation}
    \tilde{v} := \frac{r}{\norm{v}}v,
    \quad
    \tilde{w} := \frac{r}{\norm{v}}w,
    \quad\text{and}\quad
    \hat{w} = \frac{r}{\norm{w}}w
\end{equation}
and observe that $\tilde{v} = \hat{v} \in U_r\pars{\mbb{T}_u\mcal{M}}$ and that $\hat{w}\in U_r\pars{\mbb{T}_u\mcal{M}}$.
Moreover, $\norm{\hat{v}-\hat{w}} \le \norm{\hat{v}-\tilde{w}} + \norm{\tilde{w}-\hat{w}}$ and $\norm{\tilde{v}-\tilde{w}} = \frac{r}{\norm{v}}\norm{v-w} \le Cr\norm{v} \le Cr^2$.
It thus remains to show that $\norm{\tilde{w}-\hat{w}} \le Cr^2$.

To see this we consider the intersection of $\mcal{M}-u$ and $\mbb{T}_u\mcal{M}$ with the plane $\inner{0,v,w}$.
This is illustrated in \cref{fig:unitization}.
Since all the points that we have defined so far reside in this plane, the distances between them are preserved and we can henceforth consider only this two-dimensional problem.

To show $a := \norm{\tilde{w}-\hat{w}} \le \norm{\tilde{w}-\tilde{v}} =: b$, we consider the triangle $\Delta\pars{\tilde{v},\tilde{w},0}$ and employ the Pythagorean theorem
\begin{equation}
    r^2 = \pars{r-a}^2 + b^2 .
\end{equation}
Expanding the product and rearranging the terms results in the equation $b^2 = 2ra - a^2$.
Since $r\ge a$ also $2ra \ge 2a^2$.
Therefore, $b^2 \ge 2a^2 - a^2 = a^2$ which is what we wanted to prove.

\paragraph{Proof of 2.}
Let $\hat{w}\in U_r\pars{\mbb{T}_u\mcal{M}}$.
Since $r\le R$, \cref{thm:convergence_M} guarantees that there exists a $v\in\mcal{M}\cap B\pars{u,r}-u$ such that $\norm{\hat{w}-v}\le Cr^2$.
Let $\hat{v} := \frac{r}{\norm{v}}v$ and observe that, by the reverse triangle inequality,
\begin{equation}
    \norm{\hat{w}} - \norm{v} \le \abs{\norm{\hat{w}} - \norm{v}} \le \norm{\hat{w}-v} \le Cr^2 .
\end{equation}
Rearranging the terms and substituting $\norm{\hat{w}} = r$ then yields $\norm{v} \ge r-Cr^2$.
It is now easy to estimate
\begin{equation}
    \norm{v-\hat{v}} = \abs*{1-\frac{r}{\norm{v}}}\norm{v} 
    = r-\norm{v} \le Cr^2 .
\end{equation}
Finally, using the triangle inequality, we obtain $\norm{\hat{w}-\hat{v}}\le \norm{\hat{w}-v} + \norm{v-\hat{v}}\le 2Cr^2$.
This concludes the proof.
\qedappendix

\begin{figure}[ht]
    \centering
    \newcommand{\degre}{\ensuremath{^\circ}}
    \definecolor{angleColor}{rgb}{0.5,0.5,0.5}
    \definecolor{denim}{rgb}{0.08, 0.38, 0.74}
    \tikzset{
    dot/.style = {circle, fill, minimum size=#1, inner sep=0pt, outer sep=0pt},
    dot/.default = 3.5pt  %
    }
    \newcommand*\xlim{7.5}
    \newcommand*\curvature{0.2}
    \newcommand*\radius{5.5}
    \newcommand*\vx{4.0}
    \pgfmathdeclarefunction{Parabola}{1}{\pgfmathparse{-(#1)^2*\curvature/2}}
    \begin{tikzpicture}[line cap=round,line join=round]
    
    \clip(-\xlim,-4) rectangle (\xlim,1);  %
    
    \coordinate (u) at (0,0);
	\coordinate (v) at ($(\vx,{Parabola(\vx)})$);
	\path let \p1 = (v) in  coordinate (w) at (\x1,0);
    \coordinate (hat_w) at (\radius,0);
    
    \draw [line width=1.2pt,domain=-6.5:6.5,samples=50,name path=parabola] plot (\x, {Parabola(\x)});
    {
        \def\Mx{-5.75}
        \draw[color=black] ($(\Mx,{Parabola(\Mx)})$) node[anchor=south east] {$\mcal{M}-u$};
    }
    \draw [line width=1.2pt,domain=-\xlim:\xlim,name path=line] plot (\x, {0*\x});
    {
        \def\TuMx{-6.5}
        \draw[color=black] (\TuMx,0) node[anchor=south] {$\mbb{T}_u\mcal{M}$};
    }
    \draw [name path=circle] (u) circle (\radius);
    
	\path [name path=ray] (u) -- ($(u)!2*\radius!(v)$);  %
    \path [name intersections={of = ray and circle}];
	\coordinate (hat_v) at (intersection-1);
    \path let \p1 = (hat_v) in coordinate (tilde_w) at (\x1,0);
    
    \node[above] at ({-\radius/2},0) {$r$};
    \node[dot, label=above:$0$] at (u) {};
    \node[dot, label=below:$v$] at (v) {};
    \node[dot, label=above:$w$] at (w) {};
    \node[dot, label=above right:$\hat{w}$] at (hat_w) {};
    \node[dot, label={right:$\tilde{v}=\hat{v}$}] at (hat_v) {};
    \node[dot, label={above:$\tilde{w}$}] at (tilde_w) {};
    
    \draw (v) -- (w);
    \draw [dash pattern=on 3pt off 3pt] (u) -- (hat_v) -- (tilde_w);
    
    \end{tikzpicture}
    \caption{}
    \label{fig:unitization}
\end{figure}

\note{Let in the following $A_r := u-\mcal{M}\cap B\pars{u,r}$ and $B_r := \mbb{T}_u\mcal{M}\cap B\pars{0,r}$.
Observe that $U_r\pars{B_r} = S\pars{0,r}\cap\mbb{T}_u\mcal{M} = U_r\pars{B_1}$ and that $U\pars{A_r}$ is a decreasing sequence with $U\pars{B_1}\subseteq U\pars{A_r}$ for all $r$.
Now we have the further inclusion
\begin{equation} \label{eq:Ur_subset}
    U_r\pars{A_r} \subseteq \bigcup_{\tilde{w}\in U_r\pars{\mbb{T}_u\mcal{M}}} S\pars{0,r}\cap B\pars{\tilde{w}, 2Cr^2} =: rC_r .
\end{equation}
Since $d_{\mathrm{H}}\pars{rC_r, S\pars{0,r}\cap\mbb{T}_u\mcal{M}} \le 2Cr^2$ we can conclude that $d_{\mathrm{H}}\pars{C_r, S\pars{0,1}\cap\mbb{T}_u\mcal{M}} \le 2Cr$.
Since $U\pars{B_r} \subseteq U\pars{A_r} \subseteq C_r$ we can conclude that $d_{\mathrm{H}}\pars{U\pars{A_r}, U\pars{B_r}} \le \frac{1}{r}2Cr^2 = 2Cr$.}

\section{Algorithm for computing the variation function in Figure~\ref{fig:local_variation_constant}} \label{sec:algorithm_KUbbone}

Let in the following $K\pars{A} := \sup_{a\in A} \norm{a}_\infty^2$ and observe that $\norm{\mfrak{K}_A}_\infty = K\pars{U\pars{A}}$ for any set $A\subseteq\mcal{V}$.
Moreover, let $K^{\mathrm{loc}}_{u,r}\pars{\mcal{M}} := K\pars{U\pars{\braces{u_{\mcal{M}}} - \mcal{M}\cap B\pars{u_{\mcal{M}},r}}}$.
We present an algorithm, which computes the quantity %
\begin{equation}
    K^{\mathrm{loc},\infty}_{u,r}\pars{\mcal{M}}
    := K\pars{U\pars{\braces{u_{\mcal{M}}} - \mcal{M}^{\mathrm{loc},\infty}_{u,r}}}
    \quad\text{with}\quad
    \mcal{M}^{\mathrm{loc},\infty}_{u,r} := \braces{v\in\mcal{M} : \norm{u_{\mcal{M}}-v}_\infty\le r} .
\end{equation}
Since $\norm{\bullet}_\infty \le \norm{\bullet}_2 \le \sqrt{N}\norm{\bullet}_\infty$ on the finite dimensional Euclidean space $\mbb{R}^N$ we can conclude that
\begin{equation}
    \mcal{M}\cap B\pars{u,N^{-1/2}r}
    \subseteq \mcal{M}^{\mathrm{loc},\infty}_{u,r}
    \subseteq \mcal{M}\cap B\pars{u,r}
    \quad\text{and hence}\quad
    K^{\mathrm{loc}}_{u,N^{-1/2}r}\pars{\mcal{M}} \le K^{\mathrm{loc},\infty}_{u,r}\pars{\mcal{M}} \le K^{\mathrm{loc}}_{u,r}\pars{\mcal{M}} .
\end{equation}
This equivalence justifies the use of this modified variation constant, since the rate of convergence of $K^{\mathrm{loc},\infty}_{u,r}\pars{\mcal{M}_l}$ equals that of $K^{\mathrm{loc}}_{u,r}\pars{\mcal{M}_l}$ for any sequence of model classes $\mcal{M}_l\subseteq\mbb{R}^N$.
The following proposition now shows how this modification allows us to simplify the computation of the variation constant.

\begin{proposition} \label{prop:KU_bbone_bound}
    Let $\mcal{M}$ be the set of rank-$1$ matrices in $\mbb{R}^{d_1\times d_2}$ and define $M_{\alpha,\beta,\gamma} := \pars{\alpha\ \beta\ \cdots\ \beta}^\intercal \pars{1\ \gamma\ \cdots\ \gamma}\in\mbb{R}^{d_1\times d_2}$ and $\mathsf{Max}_{M,r} := \braces{M_{\alpha,\beta,\gamma} : \norm{\bbone - M_{\alpha,\beta,\gamma}}_\infty = \abs{1-\alpha} \le r}$.
    Then
    \begin{equation}
        K\pars{U\pars{\bbone-\mcal{M}^{\mathrm{loc},\infty}_{\bbone,r}}}
        = \sup_{M_{\alpha,\beta,\gamma}\in \mathsf{Max}_{M,r}} \frac{\pars{1-\alpha}^2}{\norm{\bbone - M_{\alpha,\beta,\gamma}}_{\mathrm{Fro}}^2} .
    \end{equation}
\end{proposition}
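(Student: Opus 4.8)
The plan is to unfold the definitions, reduce the optimization to matrices of the special three-parameter form by a symmetry argument, and then justify this reduction by a convexity (equalization) argument. First I would rewrite the left-hand side: since $\bbone$ is itself rank-$1$ we have $u_{\mcal{M}} = \bbone$, and by the identity $\norm{\mfrak{K}_A}_\infty = K\pars{U\pars{A}} = \sup_{a\in A\setminus\braces{0}} \norm{a}_\infty^2/\norm{a}^2$ the quantity in question equals
\[
    \sup\braces*{\frac{\norm{\bbone - v}_\infty^2}{\norm{\bbone - v}^2} \,:\, v\in\mcal{M},\ \norm{\bbone-v}_\infty\le r,\ v\ne\bbone},
\]
a supremum over rank-$1$ matrices $v = xy^\intercal$ subject to the $\infty$-constraint.

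Second, I would exploit two invariances. Both $\norm{\bbone - v}_\infty$ and $\norm{\bbone - v}_{\mathrm{Fro}}$ are unchanged under simultaneous row/column permutations (which fix $\bbone$ and preserve rank $1$), so I may assume the maximal entry of $\abs{\bbone - v}$ sits at position $(1,1)$. Using the scaling freedom $xy^\intercal = (cx)(y/c)^\intercal$ I normalize $y_1 = 1$ and set $\alpha := x_1$, so that $\norm{\bbone - v}_\infty = \abs{1-\alpha}$. This already matches the defining condition of $\mathsf{Max}_{M,r}$ and shows $\mathsf{Max}_{M,r}\subseteq\mcal{M}^{\mathrm{loc},\infty}_{\bbone,r}$, giving the inequality ``$\ge$'' trivially.

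The heart of the proof is the reverse inequality, obtained by an equalization argument. Writing $\norm{\bbone-v}_{\mathrm{Fro}}^2 = \sum_i f_y(x_i)$ with $f_y(t) := (1-t)^2 + \sum_{j\ge 2}(1-t\,y_j)^2$, I observe that $f_y$ is a strictly convex quadratic and that the admissible set $C := \braces{t : \abs{1-t\,y_j}\le\abs{1-\alpha}\ \forall j}$ (the values of $x_i$ that keep the maximum at $(1,1)$) is a closed interval containing every $x_i$. Replacing all of $x_2,\dots,x_{d_1}$ by the common constrained minimizer $\beta := \argmin_{t\in C} f_y(t)$ leaves the $(1,1)$-entry (hence the numerator) fixed, keeps the maximum at $(1,1)$, and can only decrease each row's contribution since $f_y(\beta)\le f_y(x_i)$; thus the ratio does not decrease. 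Performing the analogous step on the columns — now minimizing the convex column functional $g(s) = (1-\alpha s)^2 + (d_1-1)(1-\beta s)^2$ over the corresponding common interval and replacing $y_2,\dots,y_{d_2}$ by a single $\gamma$ — yields exactly $M_{\alpha,\beta,\gamma}\in\mathsf{Max}_{M,r}$ with ratio at least that of $v$. Combining the two inequalities, and recalling $\norm{\bbone - M_{\alpha,\beta,\gamma}}_\infty^2 = (1-\alpha)^2$ for $M\in\mathsf{Max}_{M,r}$, gives the claimed equality (with $\norm{\bullet}^2 = \tfrac{1}{d_1d_2}\norm{\bullet}_{\mathrm{Fro}}^2$ in the denominator).

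The main obstacle I anticipate is the bookkeeping around the equalization step: one must check that the constrained minimizer $\beta$ really lies in the common interval $C$ (so that the new matrix still obeys the $\infty$-constraint and the maximum stays at $(1,1)$), and that $C$ is genuinely a single interval — this uses that each inequality $\abs{1-t\,y_j}\le\abs{1-\alpha}$ carves out an interval in $t$ and that their intersection is nonempty because it contains the original coordinates. A few degenerate cases also need care — most notably when the scaling normalization $y_1=1$ fails (e.g.\ $v_{11}=0$ with a vanishing first column) and the question of whether the supremum is attained — which I would dispatch by a transposition/relabeling or a short continuity/limiting argument rather than by the main convexity estimate.
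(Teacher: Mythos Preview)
Your proposal is correct and follows essentially the same two-step strategy as the paper: first use permutation invariance to place the maximal entry of $\abs{\bbone-v}$ at position $(1,1)$, then equalize the remaining rows and columns to reduce to the three-parameter family $M_{\alpha,\beta,\gamma}$.

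The one difference worth pointing out is in the equalization step, and it is precisely where you anticipate difficulties. You propose replacing $x_2,\dots,x_{d_1}$ by the \emph{constrained minimizer} $\beta=\argmin_{t\in C}f_y(t)$, which forces you to verify that $C$ is a nonempty interval and that the maximum stays at $(1,1)$. The paper sidesteps this entirely: it replaces the entries by the \emph{best existing coordinate}, i.e.\ sets $\tilde w_l:=w_{l^*}$ for all $l\ge2$ where $l^*=\argmin_{l>1}\norm{\boldsymbol 1 - vw_l}_2$. Since $w_{l^*}$ is already one of the original columns, the new matrix $v\tilde w^\intercal$ automatically satisfies $\norm{\bbone-v\tilde w^\intercal}_\infty=\abs{1-v_1w_1}$ (its entries form a subset of the old entries), and the Frobenius norm can only drop because each column contribution is replaced by the smallest one. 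No constraint interval, no convexity argument, no degenerate cases to chase. Your approach works, but the paper's choice of replacement value eliminates exactly the bookkeeping you flagged as the main obstacle.
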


With this proposition we can compute $\mcal{K}^{\mathrm{loc}}_{u,r}$ numerically. %
For fixed $r$, the condition $M_{\alpha,\beta,\gamma}\in \mathsf{Max}_{M,r}$ implies $\abs{1-\alpha} \le r$ and $\abs{1-\beta} \le \abs{1-\alpha}$.
We can hence discretize $\alpha$ in the range $1-r = \alpha_1 < \ldots < \alpha_m = 1+r$ and $\beta$ in the range $1-\abs{1-\alpha} = \beta_1 < \ldots < \beta_m = 1+\abs{1-\alpha}$ for some $m\in\mbb{N}$.
The resulting estimate
\begin{equation}
    K^{\mathrm{loc}}_{u,r} \approx \norm{\boldsymbol{K}}_{\mathrm{max}}
    \quad\text{with}\quad
    \boldsymbol{K}_{jk} = \sup_{\gamma\in\Gamma} \frac{\pars{1-\alpha_j}^2}{\norm{\bbone - M_{\alpha_j,\beta_k,\gamma}}_{\mathrm{Fro}}^2}
\end{equation}
converges due to the continuity of $\mfrak{K}$ proven in \cref{thm:K_properties:continuity:cones}.
Note, that each value $\boldsymbol{K}_{jk}$ is the solution to a one-dimensional quadratic minimization problem with a set of linear constraints $\gamma\in\Gamma$ that are induced by the constraint $M_{\alpha_j,\beta_k,\gamma}\in\mathsf{Max}_{M,r}$.
Due to this simple structure, the values $\boldsymbol{K}_{jk}$ can be computed analytically.

This idea can be generalized to rank-$1$ tensors of order $M$ and the resulting $\boldsymbol{K}$ is of order $M$ as well.
A low-rank approximation of $\boldsymbol{K}$ can be computed by cross-approximation \citep[c.f.][]{oseledets2010cross} and $\norm{\boldsymbol{K}}_{\mathrm{max}}$ can be computed by a modified power iteration \citep[c.f.][]{grasedyck2019TT_maxnorm}.

To prove \cref{prop:KU_bbone_bound} we require the following lemma.
\begin{lemma}
    Define $\mathsf{Max}_{r} := \braces{v\in\mbb{R}^{d_1\times d_2} : \norm{v}_\infty = \abs{v_{11}} \le r}$.
    Then
    \begin{equation}
        K\pars{U\pars{\bbone-\mcal{M}^{\mathrm{loc},\infty}_{\bbone,r}}}
        = K\big(U\Big(\pars{\bbone-\mcal{M}^{\mathrm{loc},\infty}_{\bbone,r}}\cap \mathsf{Max}_{r}\Big)\big) .
    \end{equation}
\end{lemma}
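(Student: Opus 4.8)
The plan is to exploit the invariance of both the objective $w\mapsto\norm{w}_\infty^2/\norm{w}^2$ and the set $\bbone-\mcal{M}^{\mathrm{loc},\infty}_{\bbone,r}$ under simultaneous permutations of rows and columns. First I would recall that $K\pars{U\pars{A}} = \sup_{a\in A\setminus\braces{0}} \norm{a}_\infty^2/\norm{a}^2$ for any $A\subseteq\mcal{V}$, so that the asserted identity is an equality of two suprema of \emph{the same} ratio, taken over the full set $\bbone-\mcal{M}^{\mathrm{loc},\infty}_{\bbone,r}$ on the one hand and over its intersection with $\mathsf{Max}_r$ on the other. The whole lemma then reduces to showing that restricting attention to matrices whose maximal entry sits at position $\pars{1,1}$ does not lower this supremum.

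The inequality $K\pars{U\pars{\bbone-\mcal{M}^{\mathrm{loc},\infty}_{\bbone,r}}} \ge K\big(U\big(\pars{\bbone-\mcal{M}^{\mathrm{loc},\infty}_{\bbone,r}}\cap\mathsf{Max}_r\big)\big)$ is immediate: the intersection is contained in $\bbone-\mcal{M}^{\mathrm{loc},\infty}_{\bbone,r}$, the unitised sets are then nested, and $K\circ U$ is monotone under inclusion (equivalently, $\mfrak{K}_\bullet$ is monotone, cf.\ \cref{thm:K_properties:union}). The content is in the reverse inequality, which I would establish by a symmetry argument. For permutation matrices $P\in\mbb{R}^{d_1\times d_1}$ and $Q\in\mbb{R}^{d_2\times d_2}$ the map $w\mapsto PwQ$ preserves both $\norm{\bullet}_\infty$ and $\norm{\bullet}_{\mathrm{Fro}}$, hence also $\norm{\bullet}$, so the ratio $\norm{w}_\infty^2/\norm{w}^2$ is invariant. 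Moreover this map preserves $\bbone-\mcal{M}^{\mathrm{loc},\infty}_{\bbone,r}$: writing $w=\bbone-v$ with $v$ rank-$1$ and $\norm{\bbone-v}_\infty\le r$, one has $PwQ=\bbone-PvQ$ because $P\bbone Q=\bbone$ (permutations fix the all-ones matrix $\boldsymbol{1}\boldsymbol{1}^\intercal$), while $PvQ$ is again rank-$1$ and $\norm{\bbone-PvQ}_\infty=\norm{w}_\infty\le r$.

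Given any $w\in\bbone-\mcal{M}^{\mathrm{loc},\infty}_{\bbone,r}\setminus\braces{0}$, I would then pick indices $\pars{i,j}$ with $\abs{w_{ij}}=\norm{w}_\infty$ and choose $P,Q$ moving row $i$ and column $j$ into first position. The matrix $w':=PwQ$ lies in $\bbone-\mcal{M}^{\mathrm{loc},\infty}_{\bbone,r}$ by the invariance just shown, satisfies $\norm{w'}_\infty=\abs{w'_{11}}\le r$ so that $w'\in\mathsf{Max}_r$, and realises the identical ratio $\norm{w'}_\infty^2/\norm{w'}^2=\norm{w}_\infty^2/\norm{w}^2$. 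Taking the supremum over all such $w$ yields the missing $\le$, and the two inequalities together give the claim.

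The argument is elementary, and the only step demanding care is the verification that \emph{every} ingredient of the defining conditions — the base point $\bbone$, the rank-$1$ constraint, and the $\ell^\infty$-ball constraint $\norm{\bbone-v}_\infty\le r$ — is genuinely invariant under the row/column permutation group, so that the permuted witness $w'$ remains admissible. Once this is in place the relocation of the maximal entry to $\pars{1,1}$ is automatic and no analytic estimates are required.
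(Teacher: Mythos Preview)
Your proof is correct and follows essentially the same permutation-invariance argument as the paper: both exploit that $\norm{\bullet}_\infty$ and $\norm{\bullet}_{\mathrm{Fro}}$ are invariant under row/column permutations, that $\mcal{M}^{\mathrm{loc},\infty}_{\bbone,r}$ is preserved under such permutations (because $P\bbone Q=\bbone$ and rank is preserved), and that suitable permutations move the maximal entry of $\bbone-v$ to position $\pars{1,1}$. Your write-up is in fact slightly more explicit than the paper's, spelling out the easy monotonicity direction and the identity $P\bbone Q=\bbone$, but the underlying idea is the same.
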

\begin{proof}
    Observe that $\norm{\bullet}_\infty$ and $\norm{\bullet}_{\mathrm{Fro}}$ are invariant under permutation and that for all permutation matrices $P_1, P_2$ and for all $v\in\mcal{M}^{\mathrm{loc},\infty}_{\bbone,r}$ it holds that $P_1vP_2\in\mcal{M}^{\mathrm{loc},\infty}_{\bbone,r}$.
    Moreover, for all $v\in\mcal{M}^{\mathrm{loc},\infty}_{\bbone,r}$ there exist permutation matrices $P_1, P_2$ such that $P_1 \pars{\bbone-v} P_2 \in \mathsf{Max}_{r}$.
    Therefore
    \begin{equation}
        \sup_{v\in\mcal{M}^{\mathrm{loc},\infty}_{\bbone,r}\setminus\braces{\bbone}} \frac{\norm{\bbone - v}_\infty^2}{\norm{\bbone - v}_{\mathrm{Fro}}^2}
        = \sup_{v\in\mcal{M}^{\mathrm{loc},\infty}_{\bbone,r}\setminus\braces{\bbone}} \frac{\norm{P_1\pars{\bbone - v}P_2}_\infty^2}{\norm{P_1\pars{\bbone - v}P_2}_{\mathrm{Fro}}^2}
        = \sup_{\substack{v\in\mcal{M}^{\mathrm{loc},\infty}_{\bbone,r}\setminus\braces{\bbone}\\\bbone-v\in\mathsf{Max}_{r}}} \frac{\norm{\bbone - v}_\infty^2}{\norm{\bbone - v}_{\mathrm{Fro}}^2} .
    \end{equation}
\end{proof}

\begin{proof}[Proof of \cref{prop:KU_bbone_bound}]
    \renewcommand{\vec}{\boldsymbol}
    Let $vw^\intercal \in \mcal{M}^{\mathrm{loc},\infty}_{\bbone,r}\setminus\braces{\bbone}$.
    By the previous lemma we may assume that $\bbone-vw^\intercal \in \mathsf{Max}_{r}$ and define
    $\vec{1} := \pars{1\ \ldots\ 1}^\intercal$,
    $l^* := \argmin_{l>1} \braces{\norm{\vec{1}-vw_l}_2}$ and $\widetilde{w} := \pars{w_1\ w_{l^*}\ \ldots\ w_{l^*}}^\intercal$. Then
    \begin{align}
        \norm{\bbone - vw^\intercal}_\infty
        &= \max_{\substack{k=1,\ldots,d_1 \\ l=1,\ldots,d_2}}\braces{\abs{1 - v_kw_l}}
        = \abs{1 - v_1w_1}
        = \max_{\substack{k=1,\ldots,d_1 \\ l=1,l^*}k}\braces{\abs{1 - v_kw_l}}
        = \norm{\bbone - v\widetilde{w}^\intercal}_\infty \\
        \intertext{implies $\bbone-v\widetilde{w}^\intercal\in\mathsf{Max}_{r}$ and since $0 < \norm{\bbone-v\widetilde{w}^\intercal}_\infty = \norm{\bbone-vw^\intercal}_\infty \le r$ also $v\widetilde{w}^\intercal\in\mcal{M}^{\mathrm{loc},\infty}_{\bbone,r}\setminus\braces{\bbone}$. Moreover, together with}
        \norm{\bbone - vw^\intercal}_{\mathrm{Fro}}^2
        &= \norm{\vec{1} - vw_1}_2^2 + \sum_{l=2}^{d_2} \norm{\vec{1} - vw_l}_2^2
        \ge \norm{\vec{1} - vw_1}_2^2 + \sum_{l=2}^{d_2} \norm{\vec{1} - vw_{l^*}}_2^2
        = \norm{\bbone - v\widetilde{w}^\intercal}_{\mathrm{Fro}}^2
    \end{align}
    it implies $K\pars{U\pars{\bbone - v\widetilde{w}^\intercal}} \ge K\pars{U\pars{\bbone - vw^\intercal}}$.
    We can now apply a similar argument to obtain $\widetilde{v}$ from $v$.
    Finally, observe that $M_{\alpha,\beta,\gamma} = \widetilde{v}\widetilde{w}^\intercal$ for $\alpha = \widetilde{v}_1\widetilde{w}_1$, $\beta = \widetilde{v}_2\widetilde{w}_1$ and $\gamma = \widetilde{w}_2/\widetilde{w}_1$.
\end{proof}

\end{document}